\newtheorem{theorem}{Theorem}[section]
\newtheorem{lemma}[theorem]{Lemma}
\newtheorem{corollary}[theorem]{Corollary}
\newtheorem{proposition}[theorem]{Proposition}
\theoremstyle{definition}
\newtheorem{definition}[theorem]{Definition}
\theoremstyle{remark}
\newtheorem{remark}[theorem]{Remark}
\numberwithin{equation}{section}
\DeclareMathOperator{\Pic}{Pic}
\DeclareMathOperator{\coker}{coker}
\DeclareMathOperator{\rank}{rk}
\DeclareMathOperator{\im}{Im}
\DeclareMathOperator{\td}{td}
\DeclareMathOperator{\codim}{codim}
\DeclareMathOperator{\Coh}{Coh}
\DeclareMathOperator{\Supp}{Supp}
\DeclareMathOperator{\Spec}{Spec}
\DeclareMathOperator{\ch}{ch}
\DeclareMathOperator{\chr}{char}
\DeclareMathOperator{\reg}{reg}
\begin{document}
\title{Bogomolov type inequalities and Frobenius semipositivity}

%    Information for first author
\author{Hao Max Sun}
%    Address of record for the research reported here

\address{Department of Mathematics, Shanghai Normal University, Shanghai 200234, People's Republic of China}
%    Current address

\email{hsun@shnu.edu.cn, hsunmath@gmail.com}
%    \thanks will become a 1st page footnote.
%\thanks{}

%    Information for second author

%    General info

\subjclass[2020]{Primary 14J60: Secondary 14G17, 14F08}

\date{March 20, 2025}

\keywords{Bogomolov type inequality, semistable sheaf, Frobenius amplitude, Frobenius semipositivity, asymptotic Riemann-Roch theorem}

\begin{abstract}
We prove Bogomolov type inequalities for high Chern characters of semistable sheaves satisfying certain Frobenius semipositivity.
The key ingredients in the proof are a high rank generalization of the asymptotic Riemann-Roch theorem and Langer's estimation theorem of the global sections of torsion free sheaves. These results give some Bogomolov type inequalities for semistable sheaves with vanishing low Chern characters. Our results are also applied to obtain inequalities of Chern characters of threefolds and varieties of small codimension in projective spaces and abelian varieties.
\end{abstract}

\maketitle

\setcounter{tocdepth}{1}
\tableofcontents
%\newpage

\section{Introduction}
Let $X$ be a smooth projective variety of dimension $d$ defined over an algebraically closed field $k$ of arbitrary characteristic and let $H$ be an ample divisor on $X$. The famous Bogomolov's inequality says that if $\chr(k)=0$, then $$\Delta(\mathcal{E})H^{d-2}:=(\ch^2_1(\mathcal{E})-2\ch_0(\mathcal{E})\ch_2(\mathcal{E}))H^{d-2}\geq0,$$ for any $\mu_H$-semistable sheaf $\mathcal{E}$ on $X$. It was proved by Bogomolov \cite{Bog} when $d=2$ (see also \cite{Gieseker}), and it can be easily generalized to higher dimensional case by the Mehta-Ramanathan restriction theorem. In the case of $\chr(k)>0$, Langer \cite{Langer1} proved that the same inequality holds for strongly $\mu_H$-semistable sheaves. Mehta and Ramanathan \cite{MR} showed that if $X$ satisfies $\mu_H^+(\Omega_X^1)\leq0$, then all $\mu_H$-semistable sheaves on $X$ are strongly $\mu_H$-semistable. Thus Bogomolov's inequality holds on such an $X$. See also \cite{Sun3} and \cite{Liu} for Bogomolov's inequality on product type varieties.

Bogomolov's inequality has many interesting and important applications, such as
the positivity of adjoint linear systems \cite{Reider, BS}, extension of linear systems on a divisor \cite{Pao}, divisors on the moduli space of curves \cite{Mo1}, slope inequalities \cite{Mo2}, Cayley-Bacharach property \cite{Tan}, vanishing theorems for torsion free sheaves \cite{Sun}, etc. These applications motivate us to generalize Bogomolov's inequality to higher Chern classes. In \cite{BMT}, Bayer, Macr\`i and Toda give a conjectural construction of Bridgeland stability conditions on the derived category of threefolds. The construction depends on a conjectural Bogomolov type inequality for the third Chern character of certain stable complexes (see also \cite{Sun1, Sun2} for some similar conjectural inequalities on fibred threefolds).  The conjectural inequality has been confirmed on some threefolds. We refer to \cite[$\cdots$]{BMS, BMSZ, Li1} for the precise results. But it seems very difficult to prove for a general threefold. We also refer to \cite{DRY} for a conjectural approach to sufficient conditions for Chern classes to correspond to stable bundles on Calabi-Yau threefolds.

In this paper, we will try to generalize Bogomolov's inequality in a different way. Let us consider a splitting vector bundle
$$\mathcal{G}=\mathcal{O}_X(a_1H)\oplus\mathcal{O}_X(a_2H)\oplus\cdots\oplus\mathcal{O}_X(a_rH),$$ where $a_i\in \mathbb{Z}$. In general, it is not semistable. A simple computation shows that $$H^{d-2}\Delta(\mathcal{G})=H^d((a_1+\cdots+a_r)^2-r(a_1^2+\cdots+a_r^2)).$$
Thus the Cauchy–Schwarz inequality gives the opposite of the Bogomolov inequality: $H^{d-2}\Delta(\mathcal{G})\leq0$. This observation inspires us that in order to generalize Bogomolov's inequality to higher Chern classes one need to consider a higher degree generalization of the Cauchy–Schwarz inequality. It turns out that when $t\geq1$ and $a_i\geq0$ for $1\leq i\leq r$, the power mean inequality $$(\frac{a_1+\cdots+a_r}{r})^t\leq\frac{a_1^t+\cdots+a_r^t}{r}$$ naturally extends the Cauchy–Schwarz inequality. For the splitting bundle $\mathcal{G}$, the power mean inequality is equivalent to
\begin{equation}\label{pm}
\frac{(H^{d-1}\ch_1(\mathcal{G}))^t}{t!(H^dr)^{t-1}}\leq H^{d-t}\ch_t(\mathcal{G}).
\end{equation}
Since the power mean inequality in general is not true without the positivity assumption $a_i\geq0$, one may expect that the opposite of the inequality (\ref{pm}) holds for a semistable sheaf with certain positivity. Our main result meets our expectation:

\begin{theorem}\label{Bog1}
Assume that $\chr(k)=0$. Let $\mathcal{E}$ be a rank $r$ torsion free sheaf on $X$ with $\phi_{sp}(\mathcal{E})\leq1$. If $\mathcal{E}$ is
$\mu_H$-semistable, then
$$H^{d-t}\ch_t(\mathcal{E})\leq\max\left\{\frac{(H^{d-1}\ch_1(\mathcal{E}))^t}{t!(H^dr)^{t-1}}, 0\right\}$$ for any $1\leq t\leq d$.
\end{theorem}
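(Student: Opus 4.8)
The plan is to squeeze $\chi(\mathcal{E}(mH))$, for $m\to\infty$, between an upper bound for $h^0$ coming from Langer's estimation theorem and a lower bound $\chi\le h^0$ supplied by the Frobenius semipositivity hypothesis, and then to read off the Chern‐number inequalities from the resulting polynomial comparison. The first thing I would record is that the right–hand side is nothing but the Chern number of the \emph{balanced} split bundle: setting $\delta=H^{d-1}\ch_1(\mathcal{E})/(rH^d)$ and working over $\mathbb{Q}$, the bundle $\mathcal{G}_\ast=\mathcal{O}_X(\delta H)^{\oplus r}$ satisfies $H^{d-t}\ch_t(\mathcal{G}_\ast)=(H^{d-1}\ch_1(\mathcal{E}))^t/(t!(H^dr)^{t-1})$. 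Thus the theorem asserts that, for a $\mu_H$-semistable sheaf with $\phi_{sp}(\mathcal{E})\le 1$, every Chern number $H^{d-t}\ch_t(\mathcal{E})$ is dominated by that of its balanced model (or by $0$), which is exactly the reverse of the power–mean inequality (\ref{pm}) whose equality case is $\mathcal{G}_\ast$.

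First I would exploit $\phi_{sp}(\mathcal{E})\le 1$. By the Frobenius semipositivity results established earlier in the paper, this hypothesis forces $H^i(X,\mathcal{E}(mH))=0$ for all $i\ge 2$ once $m\gg 0$, so that $\chi(\mathcal{E}(mH))=h^0-h^1\le h^0(\mathcal{E}(mH))$. This is the step in which the characteristic–zero assumption really enters, through the spreading–out and reduction–mod–$p$ definition of $\phi_{sp}$, and it is what lets me replace the Euler characteristic by $h^0$ in the desired direction.

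Next I would bring in Langer's estimation theorem. Since $\mathcal{E}$ is $\mu_H$-semistable, $\mu_H^+(\mathcal{E}(mH))=\mu_H(\mathcal{E})+mH^d$, and Langer's bound gives $h^0(\mathcal{E}(mH))\le \tfrac{rH^d}{d!}\big(\tfrac{\mu_H(\mathcal{E})}{H^d}+m\big)_+^d+(\text{lower order in }m)$, the truncation $(\cdot)_+$ being the source of the $\max\{\,\cdot\,,0\}$ in the statement. On the other side, the high–rank asymptotic Riemann--Roch theorem expands $\chi(\mathcal{E}(mH))$ as a degree–$d$ polynomial in $m$ whose coefficient of $m^{d-t}$ equals $\tfrac{1}{(d-t)!}H^{d-t}\ch_t(\mathcal{E})$ up to corrections involving only the lower Chern characters $\ch_{<t}(\mathcal{E})$ and the Todd class of $X$. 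Chaining the three relations yields a single polynomial inequality $\chi(\mathcal{E}(mH))\le \tfrac{rH^d}{d!}\big(\tfrac{\mu_H(\mathcal{E})}{H^d}+m\big)_+^d+o(m^d)$ for $m\gg 0$, whose leading coefficient matches that of the balanced model.

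The hard part is to descend from this asymptotic comparison to the individual statement for each $t$ and to recover the \emph{sharp} constant $1/(t!(H^dr)^{t-1})$ rather than a weaker one. I would first normalize to slope zero by the formal twist replacing $\ch(\mathcal{E})$ with $\ch(\mathcal{E})\cdot e^{-\delta H}$; this reduces the claim to $H^{d-t}\ch_t\le 0$ for $t\ge 2$ and aligns the Todd corrections on the two sides, turning the top–order comparison into a term–by–term one. To isolate a fixed index I would restrict $\mathcal{E}$ to a general complete intersection $Y=H_1\cap\cdots\cap H_{d-t}$ of members of $|H|$, so that $\mathcal{E}|_Y$ stays $\mu_{H|_Y}$-semistable by the Mehta--Ramanathan restriction theorem, $\phi_{sp}(\mathcal{E}|_Y)\le 1$ is inherited, and $H^{d-t}\ch_t(\mathcal{E})=\int_Y\ch_t(\mathcal{E}|_Y)$ becomes the top Chern number on the $t$–fold $Y$; this reduces the whole theorem to the top–dimensional case of each dimension. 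The main obstacle, and the technical heart of the argument, is precisely that the top Chern number sits in the constant term of the Hilbert polynomial, which the large–$m$ asymptotics see most weakly, so that one must ensure the additive and Todd–type lower–order defects in Langer's estimate and in asymptotic Riemann--Roch cancel accurately enough to leave exactly the power–mean constant. This sharp cancellation is what the high–rank refinement of asymptotic Riemann--Roch is designed to supply, and I expect the bulk of the work — including any tensor– or symmetric–power amplification needed to reach the higher Chern characters — to lie there.
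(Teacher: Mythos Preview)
Your proposal has a genuine gap: the amplification you choose---twisting by $\mathcal{O}_X(mH)$ and sending $m\to\infty$---cannot recover the sharp constant, and the paper uses a fundamentally different amplification. After your (correct) restriction to a $t$-fold $Y$, the number $\ch_t(\mathcal{E}|_Y)$ sits in the \emph{constant} term of the polynomial $\chi(\mathcal{E}|_Y(mH))$, whereas the leading coefficient $rH^t/t!$ matches Langer's bound identically. An inequality $P(m)\le Q(m)$ for $m\gg 0$ constrains only leading coefficients, so nothing about $\ch_t$ survives; the lower-order terms in Langer's estimate involve $f(r)$ and the binomial shifts, which have no relation to the Todd corrections, so the ``sharp cancellation'' you hope for in your last paragraph simply does not occur. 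A telling symptom is that the vanishing $H^i(\mathcal{E}(mH))=0$ for $i\ge 2$ and $m\gg 0$ is plain Serre vanishing, valid for any coherent sheaf; your argument as written never actually uses the hypothesis $\phi_{sp}(\mathcal{E})\le 1$.

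The paper's amplification is the Frobenius pullback $\mathcal{E}^{(p^n)}$, reached by spreading out to positive characteristic. Because $\ch_i(\mathcal{E}^{(p^n)})=p^{ni}\ch_i(\mathcal{E})$, the top Chern number becomes the \emph{leading} term of $\chi(\mathcal{E}^{(p^n)})$ as $n\to\infty$, and Langer's bound applied to $\mathcal{E}^{(p^n)}$ has leading term $r(p^n\mu)^d/\bigl(d!(H^d)^{d-1}\bigr)$; dividing by $p^{nd}$ yields the sharp inequality immediately. Here the hypothesis $\phi_{sp}(\mathcal{E})\le 1$ is exactly what is needed: it guarantees the uniform vanishing of $H^{\ge 2}$ across \emph{all} Frobenius pullbacks, so that $h^0-h^1=\ch_d(\mathcal{E})p^{nd}+O(p^{n(d-1)})$ (Corollary~\ref{cor4.13}). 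The drift in $\mu^+$ under Frobenius (Theorem~\ref{Langer2}) contributes an error of size $(r-1)BH^{d-1}/(p-1)$, which is removed by letting the residue characteristic $p_q\to\infty$. Your reduction to the top-dimensional case via a general complete intersection is correct and is indeed the first step in the paper; the missing idea is that the right ``large parameter'' is Frobenius iterates, not line-bundle twists or symmetric powers.
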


The positivity condition $\phi_{sp}(\mathcal{E})\leq1$ here is a certain Frobenius semipositivity, which is a variant of the Frobenius amplitude introduced by Arapura \cite{Ara1}. It measures the positivity of $\mathcal{E}$ under the Frobenius pull backs $(F_X^n)^*$ for $n\gg0$. To be precise, if $\chr(k)>0$ the sheaf $\mathcal{E}$ satisfies $\phi_{sp}(\mathcal{E})\leq 1$ if and only if there exist an integer $m$ and a very ample divisor $L$ so that $H^{j}((F_X^n)^*\mathcal{E}(iL))=0$ for any $n\geq0$, $i\geq m$ and $j\geq2$ (see Remark \ref{rm4.2}).  If $\chr(k)=0$, we say $\phi_{sp}(\mathcal{E})\leq1$ if $\phi_{sp}(\mathcal{E}_q)\leq1$ for a general $q$ over an arithmetic thickening of $(X, \mathcal{E})$ (see Section \ref{S3}).

The strategy of the proof of the theorem above is the following. Firstly, we prove two Fujita type vanishing theorems for F-semipositivity (Theorem \ref{Fujita3} and \ref{Fujita4}), which are generalizations of the results of Keeler \cite{Ke}. Then we show that these vanishing theorems give an asymptotic Riemann-Roch theorem for sheaves under the Frobenius pull backs (Corollary \ref{cor4.13}). From this and Langer's estimation theorem of the global sections of torsion free sheaves, one obtains
Theorem \ref{Bog1} by the standard spreading out technique.

In positive characteristic case, some inequalities similar to that in Theorem \ref{Bog1} are showed in Theorem \ref{Bog3}. As a corollary of Theorem \ref{Bog1} and \ref{Bog3}, one can obtain Bogomolov type inequalities without positivity assumptions.

\begin{corollary}\label{Bog4}
Let $\mathcal{E}$ be a rank $r$ strongly $\mu_H$-semistable torsion free sheaf on $X$. Let $t$ be an integer satisfying $2\leq t\leq d$. If $$(H^{d-1}\ch_1(\mathcal{E}))^i-i!(H^dr)^{i-1}(H^{d-i}\ch_i(\mathcal{E}))=0$$ for all $1\leq i\leq t-1$, then we have $$(H^{d-1}\ch_1(\mathcal{E}))^t-t!(H^dr)^{t-1}(H^{d-t}\ch_t(\mathcal{E}))\geq0.$$ In particular, if $H^{d-i}\ch_i(\mathcal{E})=0$ for all $1\leq i\leq t-1$, then $H^{d-t}\ch_t(\mathcal{E})\leq0$.
\end{corollary}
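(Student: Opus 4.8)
The plan is to extract a single clean inequality from Theorems \ref{Bog1} and \ref{Bog3} and then propagate it through twists by $\mathcal{O}_X(mH)$. For a torsion free sheaf $\mathcal{F}$ write
$$P_t(\mathcal{F}) := (H^{d-1}\ch_1(\mathcal{F}))^t - t!(H^dr)^{t-1}(H^{d-t}\ch_t(\mathcal{F})),$$
so that the assertion to be proved is $P_t(\mathcal{E})\ge 0$. Since $t!(H^dr)^{t-1}>0$, the inequality of Theorem \ref{Bog1} in characteristic zero, together with its positive-characteristic counterpart in Theorem \ref{Bog3}, rearranges for any strongly $\mu_H$-semistable $\mathcal{F}$ into
$$P_t(\mathcal{F}) \ge \min\{(H^{d-1}\ch_1(\mathcal{F}))^t,\,0\};$$
in particular $P_t(\mathcal{F})\ge 0$ as soon as $H^{d-1}\ch_1(\mathcal{F})\ge 0$. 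This is the only analytic input; everything else is formal.

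First I would record how $P_t$ transforms under the twist $\mathcal{F}\mapsto\mathcal{F}(mH)$, which preserves strong $\mu_H$-semistability. Normalizing $u_i := H^{d-i}\ch_i(\mathcal{E})/(H^dr)$ and setting $h(\theta)=\sum_i u_i\theta^i$, the identity $\ch(\mathcal{E}(mH))=\ch(\mathcal{E})e^{mH}$ becomes $h_m(\theta)=e^{m\theta}h(\theta)$ for the twisted series, while $u_1$ shifts to $u_1+m$. A direct computation then shows that the normalized quantities $Q_t:=P_t(\mathcal{E})/(H^dr)^t=u_1^t-t!\,u_t$ are governed by
$$\sum_{t\ge 0}\frac{Q_t}{t!}\theta^t = e^{u_1\theta}-h(\theta),$$
and consequently, after twisting,
$$Q_t(\mathcal{E}(mH)) = \sum_{s=0}^{t}\binom{t}{s}m^{t-s}Q_s(\mathcal{E}).$$
Since $Q_0=Q_1=0$ identically and the hypothesis $P_i(\mathcal{E})=0$ for $1\le i\le t-1$ forces $Q_2=\cdots=Q_{t-1}=0$, every lower term vanishes and $Q_t(\mathcal{E}(mH))=Q_t(\mathcal{E})$; equivalently $P_t(\mathcal{E}(mH))=P_t(\mathcal{E})$ is independent of $m$.

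Now I would choose $m\gg 0$, so that $H^{d-1}\ch_1(\mathcal{E}(mH))=H^{d-1}\ch_1(\mathcal{E})+mH^dr>0$. Applying the displayed inequality to the strongly $\mu_H$-semistable sheaf $\mathcal{E}(mH)$ gives $P_t(\mathcal{E}(mH))\ge 0$, and combining this with the $m$-invariance just established yields $P_t(\mathcal{E})\ge 0$, which is the first assertion. For the ``in particular'' statement, the vanishing $H^{d-i}\ch_i(\mathcal{E})=0$ for $1\le i\le t-1$ forces both $P_i(\mathcal{E})=0$ for those $i$ and $H^{d-1}\ch_1(\mathcal{E})=0$; substituting these into $P_t(\mathcal{E})\ge 0$ collapses it to $-t!(H^dr)^{t-1}(H^{d-t}\ch_t(\mathcal{E}))\ge 0$, whence $H^{d-t}\ch_t(\mathcal{E})\le 0$.

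The main obstacle I anticipate is not the combinatorics, which is robust, but the uniform invocation of Theorems \ref{Bog1} and \ref{Bog3}: one must verify that the twist $\mathcal{E}(mH)$ remains within their scope for $m\gg 0$. Strong $\mu_H$-semistability is clearly preserved by twisting; in positive characteristic Theorem \ref{Bog3} then applies directly, while in characteristic zero one must check that twisting by the ample divisor $mH$ does not destroy the Frobenius semipositivity $\phi_{sp}\le 1$ needed for Theorem \ref{Bog1} (it should only improve it, since an ample twist commutes with the Frobenius pullbacks up to a more positive line bundle) and that this is compatible with the spreading-out used to define $\phi_{sp}\le1$ over an arithmetic thickening. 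Once these bookkeeping points are settled, the numerical conclusion is immediate.
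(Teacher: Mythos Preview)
Your overall strategy---twist by $mH$, apply the Bogomolov-type inequality to $\mathcal{E}(mH)$, and expand in $m$---is exactly the paper's. Your generating-function identity $Q_t(\mathcal{E}(mH))=\sum_{s}\binom{t}{s}m^{t-s}Q_s(\mathcal{E})$ is a clean repackaging of the expansion the paper writes down directly. So the combinatorics is fine and matches the paper.

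There is, however, a genuine gap in how you invoke Theorems~\ref{Bog1} and~\ref{Bog3}. Corollary~\ref{Bog4} carries \emph{no} hypothesis $\phi_{sp}(\mathcal{E})\le 1$; it only assumes strong $\mu_H$-semistability. So the issue is not to ``preserve'' $\phi_{sp}\le 1$ under twisting---there is nothing to preserve---but to establish $\phi_{sp}(\mathcal{E}(mH))\le 1$ from scratch for $m\gg 0$. The paper does this via Proposition~\ref{est-CM} (Castelnuovo--Mumford regularity controls F-semipositivity). Your final paragraph also asserts that ``in positive characteristic Theorem~\ref{Bog3} then applies directly,'' but Theorem~\ref{Bog3} likewise requires $\phi_{sp}\le 1$, and moreover is stated only for the top degree $t=d$, not for general $t$.

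Both issues are easily repaired, and in fact your own identity hands you the fix: apply the inequality at $t=d$ (which is all that Theorem~\ref{Bog3} gives, and all the paper uses) to obtain $Q_d(\mathcal{E}(mH))\ge 0$ for $m\gg 0$, then read off from $Q_d(\mathcal{E}(mH))=\sum_{s=2}^{d}\binom{d}{s}m^{d-s}Q_s(\mathcal{E})$ that the leading nonzero coefficient $\binom{d}{t}Q_t(\mathcal{E})$ is nonnegative. This is precisely the paper's argument, and it avoids needing a degree-$t$ version of Theorem~\ref{Bog3}.
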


Since $(H^{d-1}\ch_1(\mathcal{E}))^i-i!(H^dr)^{i-1}(H^{d-i}\ch_i(\mathcal{E}))=0$ always holds true for $i=1$, the corollary gives
$$(H^{d-1}\ch_1(\mathcal{E}))^2-2H^dr(H^{d-2}\ch_2(\mathcal{E}))\geq0,$$ which is a weak form of the Bogomolov inequality. When $$H^{d-1}\ch_1(\mathcal{E})=H^{d-2}\ch_2(\mathcal{E})=0,$$ under the assumption that $\mathcal{E}$ is reflexive, Langer \cite{Langer3} (see also \cite{FL}) proved the numerical triviality of $\ch_i(\mathcal{E})$ for all $i>0$. Our inequalities in Corollary \ref{Bog4} is weaker than Langer's under the reflexive assumption, but it works for the more general torsion free case. One notices that Langer's result is not valid without the assumption of reflexivity. For example, one takes $\mathcal{E}$ to be the ideal sheaf of a codimension three subvariety of $X$. Then one sees that $H^{d-1}\ch_1(\mathcal{E})=H^{d-2}\ch_2(\mathcal{E})=0$ but $H^{d-3}\ch_3(\mathcal{E})<0$.

We also prove an analog of Theorem \ref{Bog1} for abelian varieties:
\begin{theorem}\label{Bog2}
Let $(A, \Theta)$ be a polarized complex abelian variety of dimension $d$, and let $\mathcal{E}$ be a rank $r$ torsion free sheaf on $A$. If $\mathcal{E}$ is a $GV_{-1}$ sheaf, then
$$\ch_d(\mathcal{E})\leq\max\left\{\frac{r(\mu_{\Theta}^+(\mathcal{E}))^d}{d!({\Theta}^d)^{d-1}}, 0\right\}.$$
\end{theorem}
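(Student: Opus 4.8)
The plan is to turn the statement about the top Chern character into one about an Euler characteristic, which on an abelian variety is controlled from above by global sections after a generic twist, and then to feed this into Langer's estimation theorem in an asymptotic (isogeny-scaled) form. First, since the tangent bundle of $A$ is trivial we have $\td(A)=1$, so Hirzebruch--Riemann--Roch gives $\ch_d(\mathcal E)=\chi(A,\mathcal E)$; thus it is enough to bound $\chi(A,\mathcal E)$ from above. Moreover, for any $L\in\Pic^0(A)$ the class $c_1(L)$ is numerically trivial, so $\ch_d(\mathcal E\otimes L)=\ch_d(\mathcal E)$ and hence $\chi(A,\mathcal E\otimes L)=\chi(A,\mathcal E)$.

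The role of the $GV_{-1}$ hypothesis is to control the higher cohomology after a generic twist. Writing $V^i(\mathcal E)=\{L\in\Pic^0(A):h^i(A,\mathcal E\otimes L)\neq0\}$ for the cohomology support loci, the defining condition recorded in Section \ref{S3} forces $V^i(\mathcal E)$ to be a proper closed subset of $\Pic^0(A)$ for every $i\ge 2$. Hence for a general $L\in\Pic^0(A)$ one has $h^i(A,\mathcal E\otimes L)=0$ for all $i\ge 2$, so that
\[
\chi(A,\mathcal E)=\chi(A,\mathcal E\otimes L)=h^0(A,\mathcal E\otimes L)-h^1(A,\mathcal E\otimes L)\le h^0(A,\mathcal E\otimes L).
\]
It therefore suffices to bound $h^0(A,\mathcal E\otimes L)$ for general $L$. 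Here $\mathcal E\otimes L$ is again torsion free of rank $r$, and since tensoring by a numerically trivial line bundle leaves the Harder--Narasimhan filtration and all its slopes unchanged, $\mu_\Theta^+(\mathcal E\otimes L)=\mu_\Theta^+(\mathcal E)$.

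Langer's estimation theorem bounds $h^0$ of a torsion free sheaf by a degree-$d$ polynomial in its maximal slope whose leading term is $\tfrac{r(\mu_\Theta^+)^d}{d!(\Theta^d)^{d-1}}$, but with lower-order corrections. To isolate the sharp coefficient I would pass to the isogeny pullbacks $\mathcal E_n:=[n]^*\mathcal E$ under multiplication by $n$. Since in characteristic $0$ the map $[n]$ is a Galois \'etale isogeny of degree $n^{2d}$, pullback preserves the generic vanishing condition and preserves $\mu_\Theta$-semistability while scaling every slope by $n^2$; thus $\mathcal E_n$ is a $GV_{-1}$ torsion free sheaf of rank $r$ with $\mu_\Theta^+(\mathcal E_n)=n^2\mu_\Theta^+(\mathcal E)$, whereas $\ch_d(\mathcal E_n)=n^{2d}\ch_d(\mathcal E)$ and $\chi(A,\mathcal E_n)=n^{2d}\chi(A,\mathcal E)$. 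Applying the two preceding paragraphs to $\mathcal E_n$ together with Langer's bound, and writing $[\,\cdot\,]_+=\max\{\,\cdot\,,0\}$, gives
\[
n^{2d}\ch_d(\mathcal E)=\chi(A,\mathcal E_n)\le h^0(A,\mathcal E_n\otimes L_n)\le \frac{r\bigl(n^2[\mu_\Theta^+(\mathcal E)]_+\bigr)^d}{d!(\Theta^d)^{d-1}}+O(n^{2d-2}).
\]
Dividing by $n^{2d}$ and letting $n\to\infty$ yields $\ch_d(\mathcal E)\le \tfrac{r([\mu_\Theta^+(\mathcal E)]_+)^d}{d!(\Theta^d)^{d-1}}$, which is bounded above by $\max\{\tfrac{r(\mu_\Theta^+(\mathcal E))^d}{d!(\Theta^d)^{d-1}},0\}$, as claimed. (In place of the explicit limit one could equally invoke the asymptotic Riemann--Roch statement in the spirit of Corollary \ref{cor4.13}.)

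The main obstacle is the interface between generic vanishing and the Harder--Narasimhan data: I must verify that the $GV_{-1}$ condition of Section \ref{S3} genuinely yields $h^{\ge 2}=0$ after a general $\Pic^0$-twist, and that this vanishing, together with the slope-scaling of the whole HN filtration, is stable under the isogenies $[n]$. The latter rests on the fact that pullback along a Galois \'etale isogeny preserves $\mu_\Theta$-semistability, which is standard but not purely formal. A secondary point, handled precisely by the $n\to\infty$ scaling, is confirming that the leading coefficient in Langer's estimate is exactly $\tfrac{r}{d!(\Theta^d)^{d-1}}$, so that no spurious lower-order contribution survives in the limit.
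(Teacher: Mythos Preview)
Your proof is correct and follows the same approach as the paper: use the $GV_{-1}$ hypothesis to get $h^{\ge 2}=0$ after a generic $\Pic^0$-twist, pull back by the multiplication-by-$n$ isogeny so that slopes scale by $n^2$ and $\ch_d$ by $n^{2d}$, apply Langer's estimate (Theorem~\ref{Langer1}), and let $n\to\infty$. The only point you omit is that Theorem~\ref{Langer1} requires the polarization to be very ample, so one should first replace $\Theta$ by a suitable multiple; the paper also computes $h^i_{\mathrm{gen}}(A,[n]^*\mathcal E)=0$ directly via $[n]_*\mathcal O_A\cong\bigoplus_{\alpha\in\ker\widehat{[n]}}\alpha$ rather than first asserting that $[n]^*\mathcal E$ is again $GV_{-1}$, but the two arguments are equivalent.
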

Here the positivity condition that $\mathcal{E}$ is a $GV_{-1}$ sheaf is introduced by Pareschi and Popa \cite{PP}. It equivalents to say that $\mathcal{E}$ satisfies Generic Vanishing with index $-1$, i.e.,
$$\codim \{\alpha\in\Pic^0(A): h^i(\mathcal{E}\otimes\alpha)>0 \}\geq i-1$$ for all $i$.

\subsection*{Applications}
These Bogomolov type inequalities above can be used to give a bunch of inequalities of Chern characters of threefolds and varieties of small codimension in projective spaces and abelian varieties (Theorem \ref{Chern1} and \ref{Chern2}). In particular, for a threefold we obtain the following:
\begin{proposition}[Corollary \ref{cor6.2}]
Let $X$ be a complex projective smooth threefold. Let $H$ be a very ample divisor on $X$. If $K_X$ is ample, then $$\ch_3\Big(\Omega_X^1\otimes\omega_X(5H)\Big)\leq\frac{\Big(4K_X^3+15K_X^{2}H\Big)^3}{54(K_X^3)^2}.$$
If $K_X=0$, then one has
$$c_3(X)+10Hc_2(X)\geq0.$$
\end{proposition}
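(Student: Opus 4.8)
The plan is to apply Theorem \ref{Bog1} to the rank-three locally free sheaf $\mathcal{E}=\Omega_X^1\otimes\omega_X(5H)$ on the threefold $X$ (so $d=3$ and $r=3$) at the top degree $t=d=3$, where, for a chosen ample polarization $A$, the inequality reads
\[
\ch_3(\mathcal{E})\leq\max\left\{\frac{(A^2\ch_1(\mathcal{E}))^3}{54\,(A^3)^2},\,0\right\}.
\]
I would take $A=K_X$ when $K_X$ is ample and $A=H$ when $K_X=0$, since in each case this is precisely the divisor that should appear in the target denominator $(K_X^3)^2$, respectively $(H^3)^2$.

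Before invoking the theorem I must verify its hypotheses for $\mathcal{E}$. First, $\mathcal{E}$ is torsion free, being locally free. Second, since tensoring by a line bundle preserves $\mu$-semistability, it suffices that $\Omega_X^1$ be $\mu_A$-semistable; when $K_X$ is ample this follows from the Aubin--Yau Kähler--Einstein metric in the class $c_1(K_X)$, which makes $T_X$, and hence $\Omega_X^1$, $\mu_{K_X}$-polystable, while when $K_X=0$ a Ricci-flat metric in the class $c_1(H)$ gives $\mu_H$-polystability. Third, and crucially, I must confirm the Frobenius semipositivity $\phi_{sp}(\mathcal{E})\leq1$. This is where the twist by $5H$ does its work: $\Omega_X^1(2H)$ is globally generated, being a quotient of the restriction to $X$ of the globally generated bundle $\Omega^1_{\mathbb{P}^N}(2)$ via the conormal surjection, hence nef, while $\mathcal{O}_X(K_X+3H)$ is ample because $K_X$ is nef and $H$ is ample; therefore $\mathcal{E}=\Omega_X^1(2H)\otimes\mathcal{O}_X(K_X+3H)$ is nef, and nefness yields the vanishing $H^{\geq2}\big((F_X^n)^*\mathcal{E}(iL)\big)=0$ characterizing $\phi_{sp}(\mathcal{E})\leq1$ through the Fujita-type Theorems \ref{Fujita3} and \ref{Fujita4} together with spreading out. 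I expect this verification of $\phi_{sp}(\mathcal{E})\leq1$ to be the main obstacle, since it is the step that pins down the specific twist $5H$ and where the reduction-mod-$p$ machinery enters.

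With the hypotheses secured, what remains is a Chern-character computation. From $\ch_1(\Omega_X^1)=K_X$ and $\ch_1(\mathcal{F}\otimes L)=\ch_1(\mathcal{F})+(\rank\mathcal{F})\,c_1(L)$ I obtain $\ch_1(\mathcal{E})=K_X+3(K_X+5H)=4K_X+15H$. When $K_X$ is ample, taking $A=K_X$ gives $A^2\ch_1(\mathcal{E})=4K_X^3+15K_X^2H>0$, so the maximum above is attained by the first term and the stated inequality
\[
\ch_3\big(\Omega_X^1\otimes\omega_X(5H)\big)\leq\frac{(4K_X^3+15K_X^2H)^3}{54\,(K_X^3)^2}
\]
drops out at once.

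For the case $K_X=0$ I take $A=H$, so that $\ch_1(\mathcal{E})=15H$, giving $A^2\ch_1(\mathcal{E})=15H^3$ and hence $\ch_3(\mathcal{E})\leq(15H^3)^3/(54(H^3)^2)=\tfrac{125}{2}H^3$. It then remains to expand the left-hand side. Using $c_1(\Omega_X^1)=0$, $c_2(\Omega_X^1)=c_2(X)$, $c_3(\Omega_X^1)=-c_3(X)$ and the twisting formula $\ch_3(\mathcal{F}\otimes L)=\ch_3(\mathcal{F})+\ell\,\ch_2(\mathcal{F})+\tfrac{\ell^2}{2}\ch_1(\mathcal{F})+\tfrac{\ell^3}{6}\ch_0(\mathcal{F})$ with $\ell=5H$, a short computation yields
\[
\ch_3(\mathcal{E})=-\tfrac12 c_3(X)-5H c_2(X)+\tfrac{125}{2}H^3.
\]
Substituting this into the inequality and cancelling the terms $\tfrac{125}{2}H^3$ leaves $-\tfrac12 c_3(X)-5Hc_2(X)\leq0$, which upon multiplying by $-2$ is exactly $c_3(X)+10Hc_2(X)\geq0$, as claimed.
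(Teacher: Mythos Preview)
Your Chern-character computations, the semistability verification via K\"ahler--Einstein metrics, and the final algebraic manipulations in both cases are correct and match the paper's. The genuine gap lies in your verification of $\phi_{sp}(\mathcal{E})\leq1$.

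You argue that $\mathcal{E}=\Omega_X^1(2H)\otimes\mathcal{O}_X(K_X+3H)$ is nef (in fact it is ample, being a globally generated bundle twisted by an ample line bundle), and then claim that nefness produces the required vanishing $H^{\geq2}\big((F_X^n)^*\mathcal{E}(iL)\big)=0$ via Theorems~\ref{Fujita3} and~\ref{Fujita4}. But those theorems go in the opposite direction: they \emph{consume} a bound $\phi_{sp}(\mathcal{G})\leq t$ to produce vanishing, they do not manufacture such a bound from nefness. The tools actually available for bounding $\phi_{sp}$ from positivity are Theorem~\ref{ample} and Proposition~\ref{gg}, and for a rank-$3$ ample (or globally generated) bundle on a threefold they only yield $\phi_{sp}(\mathcal{E})\leq\min\{r-1,d-1\}=2$, one short of what you need. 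Tensoring a globally generated bundle by an ample line bundle does not improve this: Theorem~\ref{tensor} gives $\phi_a(\mathcal{L}\otimes\mathcal{F})\leq\phi_{sp}(\mathcal{F})$, which again returns $\leq2$.

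The paper gets the sharper bound $\phi_{sp}(\mathcal{E})\leq1$ not from positivity of $\mathcal{E}$ itself but from the two-step resolution furnished by the Euler and conormal sequences. Twisting the conormal sequence by $\omega_X(5H)$ gives
\[
0\longrightarrow\mathcal{N}^*\otimes\omega_X(5H)\longrightarrow\Omega^1_{\mathbb{P}^N}|_X\otimes\omega_X(5H)\longrightarrow\mathcal{E}\longrightarrow0.
\]
The middle term has $\phi_{sp}\leq1$ via the Euler sequence and Theorem~\ref{Thm4.10}(3) (using that $\omega_X(4H)$ is ample, hence F-semipositive), while the kernel satisfies $\phi_{sp}\leq2$ because $\mathcal{N}^*\otimes\omega_X(5H)\cong(\wedge^{e-1}\mathcal{N})(1-e)$ is globally generated. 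Theorem~\ref{Thm4.10}(2) then feeds these into the exact sequence to output $\phi_{sp}(\mathcal{E})\leq1$. This is the step you are missing; the twist by $5H$ is calibrated to make \emph{both} pieces of the resolution behave, not merely to make $\mathcal{E}$ nef.
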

In \cite{KW} Kanazawa and Wilson obtained many interesting bounds for $c_2$ and $c_3$ of a Calabi-Yau threefold. See also \cite{CK, Hun} for some explicit bounds on $c_3$ for certain types of Calabi–Yau threefolds. One can compare our inequality with the bounds of Kanazawa and Wilson:
\begin{theorem}(\cite[Theorem 1.2]{KW})
Let $(X,H)$ be a very amply polarized Calabi–Yau threefold, i.e., $H$ is a very ample divisor on $X$. Then the following inequality
holds:
$$-36H^3-80\leq \frac{c_3(X)}{2}\leq 6H^3+40.$$
\end{theorem}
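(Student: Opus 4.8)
The plan is to translate the Chern-number bound into bounds on Hodge numbers and then to attack the two sides by quite different means. Since $X$ is Calabi--Yau, $c_1(X)=0$, $\chi(\mathcal{O}_X)=0$, and the Gauss--Bonnet identity gives $c_3(X)=e(X)=2\big(h^{1,1}(X)-h^{2,1}(X)\big)$, so $\tfrac12 c_3(X)=h^{1,1}(X)-h^{2,1}(X)$. Because $h^{2,1}(X)\geq0$ and $h^{1,1}(X)\geq1$, it suffices to bound $h^{1,1}(X)$ from above (for the upper estimate) and $h^{2,1}(X)$ from above (for the lower estimate). The common input is a smooth member $S\in|H|$, which exists by Bertini: by adjunction $K_S=(K_X+H)|_S=H|_S$ is ample, so $S$ is a minimal surface of general type with $K_S^2=H^3$, and the Lefschetz hyperplane theorem forces $q(S)=h^{1,0}(S)=h^{1,0}(X)=0$. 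Riemann--Roch on $X$ (using $c_1=0$, $\chi(\mathcal{O}_X)=0$) gives $\chi(\mathcal{O}_X(D))=\tfrac16 D^3+\tfrac1{12}c_2(X)\cdot D$, whence $\chi(\mathcal{O}_S)=\tfrac16 H^3+\tfrac1{12}c_2(X)\cdot H$ and $p_g(S)=\chi(\mathcal{O}_S)-1$; Noether's formula on $S$ then expresses $e(S)$, $b_2(S)$ and $h^{1,1}(S)$ as explicit linear functions of $H^3$ and $c_2(X)\cdot H$.

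For the upper bound I would use the Lefschetz hyperplane theorem in the form that the restriction $H^2(X,\mathbb{C})\to H^2(S,\mathbb{C})$ is an injection of Hodge structures, so that $h^{1,1}(X)\leq h^{1,1}(S)$. Combining this with the formula for $h^{1,1}(S)$ reduces everything to a two-sided control of $c_2(X)\cdot H$: the lower bound $c_2(X)\cdot H\geq0$ comes from Miyaoka's generic semipositivity of $\Omega_X^1$ on the non-uniruled (minimal) threefold $X$, and the upper bound comes from applying the Noether inequality $p_g(S)\leq\tfrac12 K_S^2+2$ to $S$, which bounds $c_2(X)\cdot H$ linearly in $H^3$. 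Substituting gives $\tfrac12 c_3(X)\leq h^{1,1}(X)\leq h^{1,1}(S)\leq(\text{linear in }H^3)$, of the stated shape, the precise constant being a matter of the numerology.

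For the lower bound the surface $S$ is of no help, since the middle cohomology $H^3(X)$ is not controlled by the Lefschetz theorem (for the quintic, for instance, $b_3$ vastly exceeds $b_2(S)$). Instead I would bound $h^{2,1}(X)=h^1(X,T_X)$ directly through the embedding $X\hookrightarrow\mathbb{P}^M$ defined by $|H|$, where $M+1=h^0(\mathcal{O}_X(H))=\chi(\mathcal{O}_X(H))$ by Kodaira vanishing and is thus itself bounded in terms of $H^3$ and $c_2(X)\cdot H$. The Euler sequence restricted to $X$ together with Kodaira vanishing ($H^1(\mathcal{O}_X(H))=H^2(\mathcal{O}_X)=0$) gives $H^1(X,T_{\mathbb{P}^M}|_X)=0$, and the normal bundle sequence $0\to T_X\to T_{\mathbb{P}^M}|_X\to N_{X/\mathbb{P}^M}\to0$ then yields $h^{2,1}(X)=h^1(T_X)\leq h^0(N_{X/\mathbb{P}^M})$. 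The remaining task is to bound the embedded first-order deformations $h^0(N_{X/\mathbb{P}^M})$ (equivalently, the relevant component of the Hilbert scheme) linearly in $H^3$.

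That last step is where I expect the real difficulty. For complete intersections $N_{X/\mathbb{P}^M}$ is a sum of line bundles and $h^0$ is immediate, but a general polarized Calabi--Yau threefold is far from a complete intersection, so one cannot compute the normal bundle; one must instead invoke the boundedness of the family of degree-$H^3$ threefolds in $\mathbb{P}^M$ and convert it into an effective, linear-in-$H^3$ estimate, carefully offsetting $h^0(N_{X/\mathbb{P}^M})$ against $h^0(T_{\mathbb{P}^M}|_X)$ so that the apparent quadratic growth in $M$ cancels. This bookkeeping, rather than any single vanishing theorem, is the crux of the lower bound.
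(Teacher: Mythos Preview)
This theorem is not proved in the paper at all: it is quoted verbatim from \cite[Theorem 1.2]{KW} purely for comparison with the paper's own Corollary \ref{cor6.2}, which establishes the different inequality $c_3(X)+10Hc_2(X)\geq0$ for Calabi--Yau threefolds. There is therefore no ``paper's own proof'' to compare your attempt against.

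As for the attempt itself: your strategy for the upper bound is essentially the one Kanazawa and Wilson use --- pass to a smooth $S\in|H|$, invoke Lefschetz to get $h^{1,1}(X)\leq h^{1,1}(S)$, express $h^{1,1}(S)$ via Noether's formula in terms of $H^3$ and $c_2(X)\cdot H$, and bound the latter by Noether's inequality $p_g(S)\leq\tfrac12 K_S^2+2$ together with Miyaoka's $c_2(X)\cdot H\geq0$. That part is sound and would deliver the right shape of bound.

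Your lower bound, however, is not a proof but a sketch that you yourself flag as incomplete. Bounding $h^0(N_{X/\mathbb{P}^M})$ linearly in $H^3$ is exactly the hard step, and appealing to ``boundedness of the Hilbert scheme'' does not by itself produce an effective linear estimate with the specific constants $36$ and $80$. Kanazawa--Wilson's actual argument for the lower bound also goes through the surface section: they bound $h^{2,1}(X)$ using the exact sequence relating $H^{2,1}(X)$ to cohomology on $S$ (coming from the residue/Poincar\'e residue description, or equivalently from $0\to\Omega_X^2\to\Omega_X^2(H)\to\Omega_X^2(H)|_S\to0$ and its relatives), which again reduces to quantities controlled by $H^3$ and $c_2(X)\cdot H$. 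Your normal-bundle approach, as stated, does not close.
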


For Fano threefolds, Theorem \ref{Chern1} gives:
\begin{proposition}[Corollary \ref{cor-Fano}]
Let $X$ be a complex smooth Fano threefold and $l_X$ the Fano index of $X$. Assume that $-K_X\sim l_XL$ for an ample divisor $L$ in $X$. Then one has
$$(\frac{13}{2}l_X-25)Lc_2(X)-\frac{1}{2}c_3(X)\leq(\frac{127}{6}l_X^3-\frac{325}{2}l_X^2+\frac{625}{2}l_X)L^3.$$ If $\Omega_X^1$ is $\mu_L$-semistable and $l_X\leq3$, we have
$$(\frac{13}{2}l_X-25)Lc_2(X)-\frac{1}{2}c_3(X)\leq(\frac{58}{27}l_X^3-\frac{25}{3}l_X^2)L^3.$$
\end{proposition}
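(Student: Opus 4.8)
The plan is to apply the threefold Chern class inequality of Theorem \ref{Chern1} (the maximal-slope refinement of Theorem \ref{Bog1}, in the spirit of the abelian statement Theorem \ref{Bog2}) to the cotangent bundle, and then specialize through the Fano relation $-K_X\sim l_XL$. Throughout I take the polarization to be $L$ and set $m=25-6l_X$, working with the twisted sheaf $\mathcal{E}=\Omega_X^1\otimes\mathcal{O}_X(mL)$; the value of $m$ is forced by matching the normalization on the common left-hand side of the two inequalities. The hypothesis required to run the inequality is $\phi_{sp}(\mathcal{E})\leq1$, and since $\phi_{sp}$ is insensitive to twisting by a line bundle this is the same as $\phi_{sp}(\Omega_X^1)\leq1$. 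I would deduce the latter from the ampleness of $-K_X$: the Fujita type vanishing theorems \ref{Fujita3} and \ref{Fujita4} furnish the uniform vanishing of $H^{\geq2}$ of the Frobenius pull backs $(F_X^n)^*\Omega_X^1$ twisted by ample divisors, which is precisely the content of $\phi_{sp}(\Omega_X^1)\leq1$ (using Remark \ref{rm4.2} in positive characteristic and the arithmetic thickening of Section \ref{S3} in characteristic zero).

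Next I would carry out the Riemann--Roch bookkeeping. Expanding $\ch(\mathcal{E})=\ch(\Omega_X^1)\,e^{mL}$ and using $c_i(\Omega_X^1)=(-1)^ic_i(X)$ together with $K_X=-l_XL$ gives $\ch_1(\mathcal{E})=(3m-l_X)L$ and
$$\ch_3(\mathcal{E})=\Big(\tfrac{l_X}{2}-m\Big)Lc_2(X)-\tfrac12c_3(X)+\Big(\tfrac{m^3}{2}-\tfrac{l_Xm^2}{2}+\tfrac{l_X^2m}{2}-\tfrac{l_X^3}{6}\Big)L^3.$$
With $m=25-6l_X$ the coefficient $\tfrac{l_X}{2}-m$ becomes $\tfrac{13}{2}l_X-25$, so the first two terms are exactly the common left-hand side of both asserted inequalities; the two statements then differ only in the upper bound placed on $\ch_3(\mathcal{E})$.

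The inequality supplied by Theorem \ref{Chern1} has the form $\ch_3(\mathcal{E})\leq\max\{(\mu_L^+(\mathcal{E}))^3/(2(L^3)^2),\,0\}$, and twisting shifts slopes by $\mu_L^+(\mathcal{E})=\mu_L^+(\Omega_X^1)+mL^3$. For the first, unconditional inequality I would invoke the generic nefness of $T_X=(\Omega_X^1)^\vee$ on a Fano variety, which gives $\mu_L^-(T_X)\geq0$ and hence $\mu_L^+(\Omega_X^1)\leq0$; since $m=25-6l_X>0$ this yields $\ch_3(\mathcal{E})\leq\tfrac{m^3}{2}L^3$, and subtracting the $L^3$-term above produces the constant $\tfrac{127}{6}l_X^3-\tfrac{325}{2}l_X^2+\tfrac{625}{2}l_X$. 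When $\Omega_X^1$ is $\mu_L$-semistable one instead has $\mu_L^+(\Omega_X^1)=\mu_L(\Omega_X^1)=-\tfrac{l_X}{3}L^3$, so $\mu_L^+(\mathcal{E})=(m-\tfrac{l_X}{3})L^3$; the hypothesis $l_X\leq3$ is exactly what makes this positive, whence Theorem \ref{Bog1} with $t=d=3$ gives the sharper bound $\tfrac{(3m-l_X)^3}{54}L^3$ and, after the same subtraction, the constant $\tfrac{58}{27}l_X^3-\tfrac{25}{3}l_X^2$.

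The main obstacle is the qualitative input $\phi_{sp}(\Omega_X^1)\leq1$: controlling $H^{\geq2}$ uniformly along all Frobenius pull backs of $\Omega_X^1$ is the delicate point, and it is precisely here that the positivity of $-K_X$ must be used, through the Fujita type vanishing theorems. The secondary structural ingredient, the slope estimate $\mu_L^+(\Omega_X^1)\leq0$ arising from the generic nefness of the tangent bundle, is the other place where the Fano hypothesis enters; once both are established, the two inequalities follow by the substitution above.
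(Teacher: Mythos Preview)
Your Riemann--Roch bookkeeping, the identification $m=25-6l_X$, and the slope estimates (including the appeal to generic nefness of $T_X$ for the first inequality and the semistability hypothesis for the second) are all correct and match the paper. The gap is entirely in the step where you claim $\phi_{sp}(\mathcal{E})\leq 1$.

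First, $\phi_{sp}$ is \emph{not} insensitive to twisting by a line bundle: since $(\mathcal{E}\otimes\mathcal{L})^{(p^n)}=\mathcal{E}^{(p^n)}\otimes\mathcal{L}^{p^n}$, the twist scales with $p^n$, and twisting by a negative line bundle destroys uniform bounds on regularity. Second, $\phi_{sp}(\Omega_X^1)\leq 1$ is simply false on a Fano threefold: already for $X=\mathbb{P}^3$, the Euler sequence gives $H^3\big((F^n)^*\Omega^1(i)\big)\cong H^3(\mathcal{O}(i-p^n))^{\oplus 4}$ for $i\geq 0$, which is nonzero whenever $i<p^n-3$, so no uniform $m$ exists and in fact $\phi_{sp}(\Omega^1_{\mathbb{P}^3})=3$. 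Third, your invocation of Theorems \ref{Fujita3}--\ref{Fujita4} is circular: those vanishing theorems take $\phi_{sp}(\mathcal{G})\leq t$ as a \emph{hypothesis}, not a conclusion, and if instead you try to put $(F_X^n)^*\Omega_X^1$ in the $\mathcal{F}$-slot, the bound $m(\mathcal{F},\mathcal{L})$ depends on $n$.

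The paper obtains $\phi_{sp}\big(\Omega_X^1((25-6l_X)L)\big)\leq 1$ by a completely different route: one uses Lee's result that $H_X:=K_X+5L=(5-l_X)L$ is very ample, embeds $X\hookrightarrow\mathbb{P}^{d+e}$ via $|H_X|$, and then applies the Euler/conormal sequence argument preceding Theorem \ref{Chern1} (global generation of $\mathcal{N}(-1)$ and of $\wedge^{e-1}\mathcal{N}(1-e)$, plus Proposition \ref{gg} and Theorem \ref{Thm4.10}) to get $\phi_{sp}(\Omega_X^1\otimes\omega_X((d+2)H_X))\leq 1$. Since $d=3$ and $\omega_X(5H_X)=\mathcal{O}_X((25-6l_X)L)$, this is exactly the twist you need; the F-semipositivity of $\omega_X(4H_X)=\mathcal{O}_X((20-5l_X)L)$ (which is nef for $l_X\leq 4$) is the remaining hypothesis. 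In other words, the specific value $m=25-6l_X$ is not merely forced by matching the left-hand side---it is forced by the positivity argument, and it is a sufficiently positive twist to overcome the negativity of $\Omega_X^1$.
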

We refer to \cite{Kaw}, \cite{IJL} and \cite{LL} for the Kawamata–Miyaoka type inequality of $c_1$ and $c_2$ of $\mathbb{Q}$-Fano varieties. We also obtain a general inequality for torsion free sheaves on threefolds.
\begin{theorem}[Theorem \ref{Thm6.5}]
Let $(X, H)$ be a polarized complex projective smooth threefold. Let $\mathcal{E}$ be a torsion free sheaf of rank $r$ on $X$. If $\det\mathcal{E}$ is ample or trivial and $\mathcal{E}$ is generically globally generated in dimension one, then one has $$\ch_3(\mathcal{E}\otimes\det\mathcal{E})\leq\frac{r(\mu_H^+(\mathcal{E}\otimes\det\mathcal{E}))^3}{6(H^3)^2}.$$
\end{theorem}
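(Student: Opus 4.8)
The plan is to deduce the inequality from the main Bogomolov-type bound (Theorem \ref{Bog1}) applied to the twisted sheaf $\mathcal{F}:=\mathcal{E}\otimes\det\mathcal{E}$, which is again torsion free of rank $r$. The first point is that the right-hand side of Theorem \ref{Bog1} in the case $d=3$, $t=3$ is exactly the target quantity: writing $\mu_H(\mathcal{F})=H^2\ch_1(\mathcal{F})/r$, one has $(H^2\ch_1(\mathcal{F}))^3/(3!(H^3r)^2)=r\mu_H(\mathcal{F})^3/(6(H^3)^2)$. Hence for a $\mu_H$-semistable $\mathcal{F}$ with $\phi_{sp}(\mathcal{F})\leq 1$ and $\mu_H(\mathcal{F})\geq 0$, Theorem \ref{Bog1} gives precisely the desired estimate with $\mu_H^+=\mu_H$. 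So the whole proof reduces to extracting semistability (or a substitute) and the positivity $\phi_{sp}\leq 1$ from the geometric hypotheses.

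Since $\mathcal{F}$ need not be semistable, I would pass to its Harder--Narasimhan filtration $0=\mathcal{F}_0\subset\cdots\subset\mathcal{F}_\ell=\mathcal{F}$ with semistable quotients $\mathcal{G}_i$ of rank $r_i$ and slope $\mu_i$, where $\mu_1=\mu_H^+(\mathcal{F})$. Applying the semistable case to each $\mathcal{G}_i$ and summing over the additive invariant $\ch_3(\mathcal{F})=\sum_i\ch_3(\mathcal{G}_i)$ yields $\ch_3(\mathcal{F})\leq\sum_i\max\{r_i\mu_i^3/(6(H^3)^2),0\}$. Provided every $\mu_i\geq 0$, the monotonicity of $t\mapsto t^3$ on $[0,\infty)$ together with $\mu_i\leq\mu_1=\mu_H^+(\mathcal{F})$ gives $\sum_i r_i\mu_i^3\leq(\sum_i r_i)(\mu_H^+)^3=r(\mu_H^+)^3$, which is exactly the claimed bound. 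This is the same HN-summation mechanism that upgrades the semistable estimate to the $\mu^+$-form in Theorem \ref{Bog2}, so it is legitimate to reuse it here.

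It remains to produce the two positivity inputs: $\mu_H^-(\mathcal{F})\geq 0$ (so all HN slopes are non-negative) and $\phi_{sp}(\mathcal{G}_i)\leq 1$ for each factor. Here the role of the twist becomes transparent. Restricting to a general complete-intersection curve $C=D_1\cap D_2$ with $D_1,D_2\in|mH|$ general (smooth and avoiding the codimension-$\geq 2$ non-locally-free locus of $\mathcal{E}$ by Bertini), the hypothesis forces $\mathcal{E}|_C$ to be generically globally generated, hence locally free with non-negative minimal slope, while $\det\mathcal{E}$ ample or trivial makes $\det\mathcal{E}|_C$ nef; therefore $\mathcal{F}|_C$ has non-negative minimal slope, and since $\mu_H^-(\mathcal{F})$ is computed on such general curves via the restriction theorem, $\mu_H^-(\mathcal{F})\geq 0$. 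For the Frobenius condition I would invoke the Fujita-type vanishing theorems for F-semipositivity (Theorems \ref{Fujita3} and \ref{Fujita4}): the semipositivity coming from generic global generation translates, after spreading out to characteristic $p$ and pulling back by Frobenius, into vanishing of $H^{\geq 2}$ of the relevant twists of $(F_X^n)^*\mathcal{F}$, which is exactly $\phi_{sp}(\mathcal{F})\leq 1$.

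The main obstacle I anticipate is the last step --- ensuring the positivity survives to the Harder--Narasimhan \emph{factors}. That generic global generation in dimension one yields non-negative slopes is comparatively soft, but verifying $\phi_{sp}(\mathcal{G}_i)\leq 1$ for the individual $\mathcal{G}_i$, rather than merely for $\mathcal{F}$ itself, requires either knowing that F-semipositivity is inherited by HN quotients or, more safely, controlling the Frobenius pullbacks of the filtration directly. I would handle this by working in the characteristic-$p$ model, using strong semistability and the generic global generation of $(F_X^n)^*\mathcal{F}$ to establish the needed higher-cohomology vanishing for the subquotients, rather than attempting to transfer an abstract $\phi_{sp}$ bound from $\mathcal{F}$ to its factors.
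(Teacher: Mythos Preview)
Your Harder--Narasimhan strategy runs into a genuine dead end at exactly the point you flag as the ``main obstacle'': there is no mechanism by which the bound $\phi_{sp}(\mathcal{F})\leq 1$ descends to the HN subquotients $\mathcal{G}_i$. Frobenius semipositivity, like nefness, is a global positivity condition and is simply not inherited by arbitrary subquotients; your suggested fix via ``generic global generation of $(F_X^n)^*\mathcal{F}$'' does not help, since global generation of a sheaf says nothing about higher cohomology vanishing for its subsheaves or HN pieces. Moreover, the HN summation you attribute to the proof of Theorem~\ref{Bog2} is not actually there: that proof, like the proof of Theorem~\ref{Thm5.1}, gets the $\mu_H^+$ directly from Langer's estimate (Theorem~\ref{Langer1}), with no filtration argument.

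In fact the detour is unnecessary. The paper applies Theorem~\ref{Thm5.1} (the $\mu_H^+$ version, which needs no semistability) directly to $\mathcal{F}=\mathcal{E}\otimes\det\mathcal{E}$, so the entire content of the proof is the single assertion $\phi_{sp}(\mathcal{E}\otimes\det\mathcal{E})\leq 1$. Your sketch of this step is too vague and misses the key idea: the twist by $\det\mathcal{E}$ is not incidental but enters through the determinantal isomorphism $\wedge^{r_0-1}\mathcal{M}^*\cong \mathcal{M}\otimes\det\mathcal{E}$, where $\mathcal{M}$ is the kernel of the evaluation map $\mathcal{O}_X^{h^0(\mathcal{E})}\to\mathcal{E}$ and $r_0=\operatorname{rk}\mathcal{M}$. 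After reducing to the globally generated case, one dualizes, observes that $\wedge^{r_0-1}\mathcal{M}^*$ agrees with a globally generated sheaf up to sheaves supported in dimension $\leq 1$, and invokes Proposition~\ref{gg2} to get $\phi_{sp}(\mathcal{M}\otimes\det\mathcal{E})\leq 2$; then Theorem~\ref{Thm4.10}(2) applied to the twisted evaluation sequence $0\to\mathcal{M}\otimes\det\mathcal{E}\to(\det\mathcal{E})^{\oplus h^0(\mathcal{E})}\to\mathcal{E}\otimes\det\mathcal{E}\to 0$ yields $\phi_{sp}(\mathcal{E}\otimes\det\mathcal{E})\leq 1$, using that $\det\mathcal{E}$ is F-semipositive when ample or trivial. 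That exterior-power d\'evissage is the substantive step you are missing.
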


As a corollary, we also obtain a Miyaoka-Yau type inequality:
\begin{corollary}[Corollary \ref{cor6.4}]
Let $X$ be a complex smooth threefold. Assume that $\Omega^1_X$ is generically globally generated in dimension one. If $K_X$ is ample, then one has
$$c_3(X)\geq3c_1(X)c_2(X)-\frac{26}{27}c_1^3(X).$$
\end{corollary}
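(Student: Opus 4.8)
The plan is to apply Theorem \ref{Thm6.5} to the cotangent bundle $\mathcal{E}=\Omega_X^1$, which has rank $r=3$ and determinant $\det\Omega_X^1=\omega_X=\mathcal{O}_X(K_X)$. Since $K_X$ is ample, $\det\mathcal{E}$ is ample, and together with the hypothesis that $\Omega_X^1$ is generically globally generated in dimension one this matches all hypotheses of Theorem \ref{Thm6.5}. Taking the polarization to be $H=K_X$, the theorem gives
\begin{equation*}
\ch_3(\Omega_X^1\otimes\omega_X)\leq\frac{3\big(\mu_H^+(\Omega_X^1\otimes\omega_X)\big)^3}{6(H^3)^2}.
\end{equation*}
It then remains to evaluate the slope $\mu_H^+(\Omega_X^1\otimes\omega_X)$ and to rewrite $\ch_3(\Omega_X^1\otimes\omega_X)$ in terms of $c_i=c_i(X)$.

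The crucial input is the value of $\mu_H^+$. Since both dualizing and tensoring by a line bundle preserve $\mu_H$-semistability (the latter merely shifting the Harder--Narasimhan filtration), it suffices to know that $T_X$ is $\mu_{K_X}$-semistable; then $\Omega_X^1\otimes\omega_X$ is $\mu_H$-semistable and $\mu_H^+=\mu_H$. Here I would invoke that an ample canonical bundle forces $\mu_{K_X}$-semistability of $T_X$: by Aubin--Yau, $K_X$ ample yields a K\"ahler--Einstein metric with negative Einstein constant and K\"ahler class proportional to $[K_X]$, and the resulting Hermitian--Einstein structure makes $T_X$ $\mu_{K_X}$-polystable, in particular semistable. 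Consequently
\begin{equation*}
\mu_H^+(\Omega_X^1\otimes\omega_X)=\mu_H(\Omega_X^1\otimes\omega_X)=\frac{H^2c_1(\Omega_X^1\otimes\omega_X)}{3}=\frac{4H^2K_X}{3}=\frac{4}{3}K_X^3,
\end{equation*}
so the right-hand side above equals $\tfrac{32}{27}K_X^3$.

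For the left-hand side I would compute $\ch(\Omega_X^1\otimes\omega_X)=\ch(\Omega_X^1)\,e^{K_X}$ and extract the degree-three part. Using $c_i(\Omega_X^1)=(-1)^ic_i(X)$ and $K_X=-c_1$, a routine bookkeeping gives
\begin{equation*}
\ch_3(\Omega_X^1\otimes\omega_X)=-\tfrac{5}{3}c_1^3+\tfrac{3}{2}c_1c_2-\tfrac{1}{2}c_3.
\end{equation*}
Substituting this and $K_X^3=-c_1^3$ into the displayed inequality and clearing denominators (multiplying through by $-2$) converts Theorem \ref{Thm6.5} into precisely $c_3\geq3c_1c_2-\tfrac{26}{27}c_1^3$.

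The main obstacle is the semistability input: for the fixed constant $\tfrac{26}{27}$ to emerge one must take $H=K_X$ and know that $\mu_H^+(\Omega_X^1)$ collapses to the generic slope $\mu_H(\Omega_X^1)=\tfrac{1}{3}K_X^3$, i.e.\ that $\Omega_X^1$ is $\mu_{K_X}$-semistable. Any genuine instability would produce a strictly larger $\mu_H^+$ and hence a weaker constant, and a Khovanskii--Teissier computation shows that even among semistable polarizations $H=K_X$ is optimal, since $(H^2K_X)^3\geq(H^3)^2K_X^3$ with equality when $H\propto K_X$. Thus the entire corollary rests on deducing $\mu_{K_X}$-semistability of $\Omega_X^1$ from the ampleness of $K_X$; once that is in hand, the Chern-class bookkeeping and the choice of polarization are routine.
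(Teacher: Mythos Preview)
Your proof is correct and follows essentially the same route as the paper: both apply Theorem \ref{Thm6.5} to $\mathcal{E}=\Omega_X^1$ with the polarization $H=K_X$, invoke the (well-known) $\mu_{K_X}$-semistability of $\Omega_X^1$ when $K_X$ is ample to replace $\mu_H^+$ by $\mu_H=\tfrac{4}{3}K_X^3$, and then unwind $\ch_3(\Omega_X^1\otimes\omega_X)\leq\tfrac{32}{27}K_X^3$ in terms of $c_i(X)$. The only cosmetic difference is that the paper first simplifies $\ch_3(\Omega_X^1\otimes\omega_X)=\ch_3(\Omega_X^1)+K_X\ch_2(\Omega_X^1)+K_X^3$ before substituting, whereas you expand directly in the $c_i$; the algebra is the same.
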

If the cotangent bundle $\Omega_X^1$ of the threefold $X$ is nef, we can apply to it the Fulton-Lazarsfeld theory for nef vector bundles developed by Demailly, Peternell and Schneider \cite{DPS}. By \cite[Theorem 2.5]{DPS}, one obtains
\begin{eqnarray*}
  c_3(X) &\leq& 0 \\
  c_1(X)c_2(X) &\leq& c_3(X)\\
  c_1^3(X)+c_3(X)&\leq&2c_1(X)c_2(X).
\end{eqnarray*}
One can compare these inequalities with ours. We refer to \cite{DS} for some general Miyaoka-Yau type inequalities.

\subsection*{Organization of the paper}
Our paper is organized as follows. In Section \ref{S2}, we review
basic notions and properties of the classical stability for coherent
sheaves. Then in Section \ref{S3} we recall
the definition and properties of Frobenius amplitude and partial Castelnuovo-Mumford regularity. In Section \ref{S4} we slightly generalize the concept of the Frobenius semipositive sheaves introduced by Arapura \cite{Ara1}, and prove two Fujita type vanishing theorems for F-semipositivity (Theorem \ref{Fujita3} and \ref{Fujita4}), which are generalizations of the results of Keeler \cite{Ke}. Then we show that these vanishing theorems give an asymptotic Riemann-Roch theorem for sheaves under the Frobenius pull backs (Corollary \ref{cor4.13}).
Theorem \ref{Bog1}, \ref{Bog2} and Corollary \ref{Bog4} will be proved in Section \ref{S5}. The applications of our main
theorems will be discussed in Section \ref{S6}.

\subsection*{Notation}
Given a field $k$, we write $\chr(k)$ for its characteristic. Let $X$ be a smooth projective variety defined over
$k$. We denote by $T_X$ and $\Omega_X^1$ the tangent bundle and cotangent bundle of $X$, respectively. $K_X$ and $\omega_X$ denote the canonical divisor and canonical sheaf of $X$, respectively. We denote by $D^b(X)$ the bounded derived category of coherent sheaves on $X$.
We write $\ch(\mathcal{E})$ and $c(\mathcal{E})$ for the Chern character and Chern class
of a coherent sheaf $\mathcal{E}$ on $X$, respectively. We denote by $c_i(X):=c_i(T_X)$ for the $i$-th Chern class of $X$. We also write $H^j(\mathcal{F})$ ($j\in \mathbb{Z}_{\geq0}$) for the cohomology groups of a sheaf
$\mathcal{F}\in\Coh(X)$ and $h^j(\mathcal{F})$ for the dimension of $H^j(\mathcal{F})$. For a
sheaf $\mathcal{G}\in\Coh(X)$, we denote by $\mathcal{G}^*:=\mathcal{H}om(\mathcal{G}, \mathcal{O}_X)$ the
dual sheaf of $\mathcal{G}$ and by $\Delta(\mathcal{G}):=\ch^2_1(\mathcal{G})-2\ch_0(\mathcal{G})\ch_2(\mathcal{G})$
the discriminant of $\mathcal{G}$.

\subsection*{Acknowledgments}
%The author is grateful to the referee for his or her valuable
%comments and pointing out some errors in the earlier version of this paper. 
The author would like to thank Wenfei Liu for his
interest and discussions. The author was supported by National Natural Science Foundation of China (Grant No.
12371047, 11771294) and Natural Science Foundation of Shanghai
(Grant No. 23ZR1447000).

\section{Slope stability for sheaves}\label{S2}
Throughout this section, we let $X$ be a smooth projective variety of dimension $d$ defined over an algebraically closed field $k$
with $\chr(k)=p$ and $H$ be a fixed ample divisor on $X$. Denote by $F_X$ the absolute Frobenius morphism of $X$, if $p>0$. We will review some basic properties of slope stability for coherent sheaves in this section.

\begin{definition}
The support of a coherent sheaf $\mathcal{F}$ on $X$ is the closed set $$\Supp(\mathcal{F})=\{x\in X: \mathcal{F}_x\neq0\}.$$ Its
dimension is called the dimension of the sheaf $\mathcal{F}$ and is denoted by $\dim(\mathcal{F})$. We denote by $\det\mathcal{F}$ the determinant invertible sheaf of $\mathcal{F}$.
\end{definition}
The annihilator ideal sheaf of $\mathcal{F}$, i.e., the kernel of $\mathcal{O}_X\rightarrow\mathcal{H}om(\mathcal{F}, \mathcal{F})$, defines a subscheme structure on $\Supp(\mathcal{F})$. If $\dim\mathcal{F}\leq d-2$, then $\det\mathcal{F}\cong\mathcal{O}_X$.

\begin{definition}
Let $\mathcal{F}$ be a coherent sheaf on $X$. We say $\mathcal{F}$ is generically globally generated in dimension $n$ if the cokernel of the evaluation morphism $$H^0(\mathcal{F})\otimes\mathcal{O}_X\xrightarrow{ev_{\mathcal{F}}}\mathcal{F}$$ is a torsion sheaf with dimension $\leq n$.
\end{definition}

We define the slope $\mu_H$ of a
coherent sheaf $\mathcal{E}\in \Coh(X)$ by
\begin{eqnarray*}
\mu_H(\mathcal{E})= \left\{
\begin{array}{lcl}
+\infty,  & &\mbox{if}~\rank(\mathcal{E})=0,\\
&&\\
\frac{H^{d-1}\ch_1(\mathcal{E})}{\rank(\mathcal{E})}, & &\mbox{otherwise}.
\end{array}\right.
\end{eqnarray*}

\begin{definition}\label{def2.1}
A coherent sheaf $\mathcal{E}$ on $X$ is $\mu_{H}$-(semi)stable if, for all non-zero subsheaves
$\mathcal{F}\hookrightarrow \mathcal{E}$, we have
$$\mu_{H}(\mathcal{F})<(\leq)\mu_{H}(\mathcal{E}/\mathcal{F}).$$ We say a $\mu_{H}$-semistable sheaf $\mathcal{E}$ is strongly $\mu_{H}$-semistable if either $p=0$ or $p>0$ and all the
Frobenius pull backs of $\mathcal{E}$ are $\mu_H$-semistable.
\end{definition}
The Harder-Narasimhan filtrations (HN-filtrations, for short)
with respect to $\mu_H$-stability exist in $\Coh(X)$: given a
non-zero sheaf $\mathcal{E}\in\Coh(X)$, there is a filtration
$$0=\mathcal{E}_0\subset \mathcal{E}_1\subset\cdots\subset \mathcal{E}_m=\mathcal{E}$$
such that: $\mathcal{G}_i:=\mathcal{E}_i/\mathcal{E}_{i-1}$ is $\mu_H$-semistable, and
$\mu_H(\mathcal{G}_1)>\cdots>\mu_H(\mathcal{G}_m)$. We set $\mu^+_H(\mathcal{E}):=\mu_H(\mathcal{G}_1)$ and $\mu^-_H(\mathcal{E}):=\mu_H(\mathcal{G}_m)$.

Mehta and Ramanathan proved the following proposition.
\begin{proposition}
If $X$ satisfies $\mu_H^+(\Omega_X^1)\leq0$, then all
$\mu_H$-semistable sheaves on $X$ are strongly $\mu_H$-semistable.
\end{proposition}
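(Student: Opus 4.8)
The plan is to pass to positive characteristic and exploit the canonical connection on Frobenius pullbacks together with Cartier descent. The case $\chr(k)=0$ is vacuous, since by definition every $\mu_H$-semistable sheaf is strongly $\mu_H$-semistable there, so assume $p=\chr(k)>0$. I must show that every Frobenius pullback $(F_X^n)^*\mathcal{E}$ of a $\mu_H$-semistable sheaf $\mathcal{E}$ is again $\mu_H$-semistable. Choosing the least $n\geq1$ for which this fails and replacing $\mathcal{E}$ by the still $\mu_H$-semistable sheaf $(F_X^{n-1})^*\mathcal{E}$, it suffices to prove the single statement: if $\mathcal{E}$ is $\mu_H$-semistable then so is $F_X^*\mathcal{E}$. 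The hypothesis $\mu_H^+(\Omega_X^1)\leq0$ is a property of $X$ alone, hence still available at this step, and I record that $\mu_H(F_X^*\mathcal{E})=p\,\mu_H(\mathcal{E})$, because $F_X^*$ multiplies $\ch_1$ by $p$ and preserves the rank.

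Suppose for contradiction that $F_X^*\mathcal{E}$ is not $\mu_H$-semistable, and let $\mathcal{V}\subset F_X^*\mathcal{E}$ be its maximal destabilizing subsheaf, so that $\mathcal{V}$ is $\mu_H$-semistable of slope $\mu_H^+(F_X^*\mathcal{E})>p\,\mu_H(\mathcal{E})$. The sheaf $F_X^*\mathcal{E}$ carries its canonical integrable connection $\nabla\colon F_X^*\mathcal{E}\to F_X^*\mathcal{E}\otimes\Omega_X^1$, which has vanishing $p$-curvature. The composite
$$\psi\colon \mathcal{V}\hookrightarrow F_X^*\mathcal{E}\xrightarrow{\ \nabla\ } F_X^*\mathcal{E}\otimes\Omega_X^1\twoheadrightarrow (F_X^*\mathcal{E}/\mathcal{V})\otimes\Omega_X^1$$
is $\mathcal{O}_X$-linear, since for $s\in\mathcal{V}$ the Leibniz term $s\otimes df$ in $\nabla(fs)$ lies in $\mathcal{V}\otimes\Omega_X^1$ and dies in the quotient. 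If $\psi=0$, then $\nabla(\mathcal{V})\subset\mathcal{V}\otimes\Omega_X^1$, so $\mathcal{V}$ is horizontal and inherits a connection of zero $p$-curvature; Cartier descent over the perfect field $k$ then gives $\mathcal{V}=F_X^*\mathcal{V}'$ for the subsheaf $\mathcal{V}'=\mathcal{V}^{\nabla}\subset\mathcal{E}$, whence $\mu_H(\mathcal{V}')=\mu_H(\mathcal{V})/p>\mu_H(\mathcal{E})$, contradicting the $\mu_H$-semistability of $\mathcal{E}$. So I may assume $\psi\neq0$.

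In this remaining case I would run a slope estimate. The image $\im\psi$ is a quotient of the $\mu_H$-semistable sheaf $\mathcal{V}$, so all its Harder--Narasimhan slopes are $\geq\mu_H(\mathcal{V})=\mu_H^+(F_X^*\mathcal{E})$, while $\im\psi$ is a subsheaf of $(F_X^*\mathcal{E}/\mathcal{V})\otimes\Omega_X^1$. Hence
$$\mu_H^+\big((F_X^*\mathcal{E}/\mathcal{V})\otimes\Omega_X^1\big)\ \geq\ \mu_H^+(F_X^*\mathcal{E}).$$
But $\mu_H^+(F_X^*\mathcal{E}/\mathcal{V})$ is the second Harder--Narasimhan slope of $F_X^*\mathcal{E}$, strictly below $\mu_H^+(F_X^*\mathcal{E})$. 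Granting the tensor-slope bound $\mu_H^+(\mathcal{A}\otimes\Omega_X^1)\leq\mu_H^+(\mathcal{A})+\mu_H^+(\Omega_X^1)$ and using $\mu_H^+(\Omega_X^1)\leq0$, the left-hand side is $\leq\mu_H^+(F_X^*\mathcal{E}/\mathcal{V})<\mu_H^+(F_X^*\mathcal{E})$, a contradiction. This finishes the argument modulo that bound.

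The main obstacle is precisely this tensor-product slope inequality in positive characteristic: because the tensor product of $\mu_H$-semistable sheaves need not be $\mu_H$-semistable when $p>0$, the estimate $\mu_H^+(\mathcal{A}\otimes\Omega_X^1)\leq\mu_H^+(\mathcal{A})+\mu_H^+(\Omega_X^1)$ is not formal. I would establish it by restricting to a general complete intersection curve $C$ cut out by $d-1$ members of $|mH|$ with $m\gg0$, where by the Mehta--Ramanathan restriction theorem the relevant slopes and the semistability of the Harder--Narasimhan factors are detected on $C$, reducing the claim to bundles on a smooth curve; alternatively one appeals to the known slope bounds for Frobenius pullbacks expressed through $\mu_H^+(\Omega_X^1)$. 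This tensor estimate is the only non-formal input, and it is exactly where the hypothesis $\mu_H^+(\Omega_X^1)\leq0$ does the essential work.
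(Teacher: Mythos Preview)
The paper does not prove this statement; it simply cites \cite[Theorem 2.1]{MR}. Your outline---reduce to a single Frobenius pullback, use the canonical connection with vanishing $p$-curvature on $F_X^*\mathcal{E}$, and invoke Cartier descent when the second fundamental form $\psi$ vanishes---is indeed the Mehta--Ramanathan strategy, and the case $\psi=0$ is handled correctly.

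The genuine gap is exactly where you locate it, and your proposed resolutions do not close it. Restricting to a complete-intersection curve $C$ merely transports the problem: on $C$ you still need that for semistable bundles $\mathcal{K}$ and $\mathcal{H}$ the tensor product $\mathcal{K}\otimes\mathcal{H}$ is semistable, and this is well known to fail in positive characteristic, so the inequality $\mu_H^+(\mathcal{A}\otimes\Omega_X^1)\leq\mu_H^+(\mathcal{A})+\mu_H^+(\Omega_X^1)$ is no easier in dimension one. Your alternative appeal to ``known slope bounds for Frobenius pullbacks expressed through $\mu_H^+(\Omega_X^1)$'' is too vague; Langer's bound (Theorem \ref{Langer2} here) is phrased via a nef $B$ with $T_X(B)$ globally generated, not via $\mu_H^+(\Omega_X^1)$, and does not deliver the conclusion from the bare hypothesis. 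One way the argument \emph{does} close---and this is essentially what is available in the original $G/P$ setting of \cite{MR}---is when $T_X$ is globally generated: then $\mathcal{V}\otimes T_X$ is a quotient of $\mathcal{V}^{\oplus N}$, so $\mu_H^-(\mathcal{V}\otimes T_X)\geq\mu_H(\mathcal{V})$, and the adjoint map $\mathcal{V}\otimes T_X\to F_X^*\mathcal{E}/\mathcal{V}$ immediately contradicts $\mu_H^+(F_X^*\mathcal{E}/\mathcal{V})<\mu_H(\mathcal{V})$ with no tensor estimate needed. Under the weaker hypothesis $\mu_H^+(\Omega_X^1)\leq 0$ one must supply a further bootstrap (for instance, arranging that the relevant HN factors are strongly semistable so that the Ramanan--Ramanathan theorem on tensor products applies); your sketch does not provide this step.
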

\begin{proof}
See \cite[Theorem 2.1]{MR}.
\end{proof}

We will use the following estimation theorem of Langer.
\begin{theorem}\label{Langer1}
Assume that $H$ is very ample. Let $\mathcal{E}$ be a rank $r$ torsion free sheaf on $X$. Then
\begin{eqnarray*}
h^0(X, \mathcal{E})\leq\left\{
\begin{array}{lcl}
rH^d\binom{\frac{\mu_H^+(\mathcal{E})}{H^d}+d+f(r)}{d},  & &\mbox{if}~\mu_H^+(\mathcal{E})\geq0,\\
&&\\
0, & &\mbox{otherwise},
\end{array}\right.
\end{eqnarray*}
where $f(r)=-1+\sum_{i=1}^{r}\frac{1}{i}$.
\end{theorem}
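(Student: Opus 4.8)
The plan is to follow the Le Potier--Simpson method (in the refined form due to Langer), reducing the statement to a uniform estimate for $\mu_H$-semistable sheaves and then running an induction on the dimension $d$ with a rank-recursion nested inside. Two reductions come first and are essentially formal. If $\mu_H^+(\mathcal{E})<0$, then $h^0(X,\mathcal{E})=0$: a nonzero section is a map $\mathcal{O}_X\to\mathcal{E}$, which is injective because $\mathcal{E}$ is torsion free, so it exhibits a subsheaf of slope $0$ and forces $\mu_H^+(\mathcal{E})\ge0$. This disposes of the ``otherwise'' case. Next, assuming $\mu_H^+(\mathcal{E})\ge0$, I pass to the Harder--Narasimhan filtration with semistable quotients $\mathcal{G}_i$ of ranks $r_i$ and slopes $\mu_H(\mathcal{G}_i)\le\mu_H^+(\mathcal{E})$, so that $h^0(\mathcal{E})\le\sum_i h^0(\mathcal{G}_i)$. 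Because $f$ is increasing and $r_i\le r$, one has $r_i\,f(r_i)\le r_i\,f(r)$, and the generalized binomial is increasing in its top argument; hence the target bound is monotone and superadditive, and it is enough to prove the estimate for each semistable $\mathcal{G}_i$ with its own slope.

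The core is therefore the semistable case, which I would attack by restricting to a general hyperplane $Y\in|H|$. By Bertini $Y$ is smooth and, for general $Y$, $\mathcal{E}|_Y$ is again torsion free, so the sequence $0\to\mathcal{E}(-H)\to\mathcal{E}\to\mathcal{E}|_Y\to0$ gives $h^0(\mathcal{E})\le h^0(\mathcal{E}(-H))+h^0(\mathcal{E}|_Y)$. Iterating in $k$ and truncating by the first reduction once the twisted slope turns negative yields $h^0(\mathcal{E})\le\sum_{k\ge0}h^0\!\left(Y_k,\mathcal{E}(-kH)|_{Y_k}\right)$, a finite sum of $h^0$'s on the $(d-1)$-dimensional $Y_k$. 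Summing the resulting $(d-1)$-dimensional binomial estimates over $k$ rebuilds the $d$-dimensional binomial $\binom{\,\cdot\,+d}{d}$, which is the mechanism that produces the shift by $d$ and the factor $H^d$. The base case $d=1$ is the curve estimate: for a semistable bundle on a smooth curve of genus $g$ one combines Riemann--Roch with the vanishing $h^1=0$ valid once $\mu_H^-$ exceeds $2g-2$, and Clifford's inequality in the intermediate range; for rank $1$ this is just $h^0(\mathcal{O}_C(D))\le\deg D+1$, which after the dimension induction gives the divisor bound $h^0(\mathcal{O}_X(D))\le H^d\binom{H^{d-1}D/H^d+d}{d}$ matching $f(1)=0$.

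The harmonic-number shift $f(r)=-1+\sum_{i=1}^r\frac1i$ is produced by the rank recursion, using $f(r)=f(r-1)+\tfrac1r$. Working with the subsheaf $\mathcal{E}'\subseteq\mathcal{E}$ generated by global sections (so that $h^0(\mathcal{E}')=h^0(\mathcal{E})$ and $\mathcal{E}'$ is globally generated), one peels off a rank-one piece and applies the inductive hypothesis to the rank-$(r-1)$ quotient, with the slope of the quotient controlled through semistability. Each such step is engineered to contribute exactly $\tfrac1r$ to the effective shift, and unwinding the recursion accumulates $\sum_{i=1}^r\frac1i$.

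The main obstacle is the sharp control of the maximal slope under restriction: to feed $\mathcal{E}|_{Y_k}$ into the inductive hypothesis one must bound $\mu_H^+(\mathcal{E}|_Y)$ in terms of $\mu_H^+(\mathcal{E})$, which requires a restriction theorem (Langer's effective restriction theorem, or the weaker Flenner/Mehta--Ramanathan statement), and one must do so with constants tight enough that the shift $f(r)$ is neither lost nor overshot. The delicate point is precisely this constant-tracking across the two nested inductions: the rank recursion must contribute $\tfrac1r$ per step rather than suffering a slope blow-up of order $\tfrac{r}{r-1}$ (the reason for using the subsheaf generated by sections instead of a single section), while the dimension recursion must keep the restriction correction within the harmonic budget. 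Assembling these estimates uniformly, and checking the two boundary cases $r=1$ and $\mu_H^+(\mathcal{E})<0$, completes the argument.
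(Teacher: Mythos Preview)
The paper does not prove this statement at all: its ``proof'' is the single line \emph{See \cite[Theorem 3.3]{Langer2}}, i.e.\ the result is quoted as a black box from Langer's paper on moduli in mixed characteristic. So there is no argument to compare against; what you have written is a sketch of the proof behind the citation rather than of anything the present paper does.

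As an outline of Langer's actual argument your sketch is on target. The two formal reductions (the case $\mu_H^+(\mathcal{E})<0$ via a nonzero section, and the passage to the semistable graded pieces via monotonicity of the binomial and of $f$) are exactly how the proof begins, and the core mechanism---a dimension induction by restriction to a general $Y\in|H|$, with the telescoping sum $\sum_k h^0(\mathcal{E}(-kH)|_{Y_k})$ rebuilding the $d$-binomial from the $(d-1)$-binomial, combined with a rank recursion that manufactures the harmonic shift $f(r)=f(r-1)+\tfrac1r$---is the Le~Potier--Simpson strategy that Langer carries out. You have also correctly located the genuinely delicate point: controlling $\mu_H^+(\mathcal{E}|_Y)$ for $Y\in|H|$ (not $|mH|$ with $m\gg0$), and doing so uniformly enough that the constants do not overrun the harmonic budget. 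In Langer's paper this is handled with care, and your remark that the global-generation trick (working with the subsheaf generated by sections) is what keeps the rank step at cost $\tfrac1r$ rather than $\tfrac{r}{r-1}$ is the right intuition. What remains implicit in your sketch is the precise curve estimate feeding the base case and the bookkeeping that shows the restriction correction fits inside $f(r)$; those are the places where a full write-up would have to track constants, but they are execution rather than a missing idea.
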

\begin{proof}
See \cite[Theorem 3.3]{Langer2}.
\end{proof}

Now let us recall another theorem of Langer on the semistability of Frobenius pull backs.

\begin{theorem}\label{Langer2}
Let $B$ be a nef divisor on $X$ such that $T_X(B)$ is globally generated. Let $\mathcal{E}$ be a torsion free sheaf on $X$ of rank $r$. Then for any $n\geq0$, we have $$\frac{\mu_H^+((F_X^n)^*\mathcal{E})}{p^n}-\mu_H^+(\mathcal{E})\leq\frac{r-1}{p-1}BH^{d-1}.$$
\end{theorem}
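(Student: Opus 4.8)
The plan is to reduce everything to a single Frobenius step for semistable sheaves and then telescope. First I would translate the hypothesis on $B$ into a bound on the instability of the cotangent bundle: since $T_X(B)$ is globally generated it is nef, so every quotient of $T_X(B)$ has nonnegative degree against $H^{d-1}$, whence $\mu_H^-(T_X(B))\geq0$. Dualizing the Harder-Narasimhan filtration of the locally free sheaf $T_X$ gives $\mu_H^-(T_X)=-\mu_H^+(\Omega_X^1)$, and twisting shifts all slopes by $BH^{d-1}$, so that
$$\mu_H^+(\Omega_X^1)\leq BH^{d-1}.$$
This is the only place the hypothesis on $B$ enters; from here on I write $C:=BH^{d-1}\geq0$ (nonnegative since $B$ is nef and $H$ ample).

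The heart of the argument is the one-step estimate for a $\mu_H$-semistable torsion free sheaf $\mathcal{F}$ of rank $\rho$, namely
$$\mu_H^+(F_X^*\mathcal{F})-\mu_H^-(F_X^*\mathcal{F})\leq(\rho-1)\,\mu_H^+(\Omega_X^1).$$
To prove it I would use the canonical Frobenius-descent connection $\nabla\colon F_X^*\mathcal{F}\to F_X^*\mathcal{F}\otimes\Omega_X^1$. For any subsheaf $V\subset F_X^*\mathcal{F}$ the composite $V\hookrightarrow F_X^*\mathcal{F}\xrightarrow{\nabla}F_X^*\mathcal{F}\otimes\Omega_X^1\to(F_X^*\mathcal{F}/V)\otimes\Omega_X^1$ is $\mathcal{O}_X$-linear (the Leibniz term dies in the quotient), and it vanishes exactly when $V$ is horizontal, in which case Cartier descent gives $V=F_X^*W$ for a subsheaf $W\subset\mathcal{F}$ with $\mu_H(W)=\mu_H(V)/p$. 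Now let $0=V_0\subset V_1\subset\cdots\subset V_l=F_X^*\mathcal{F}$ be the Harder-Narasimhan filtration, with graded slopes $\mu_1>\cdots>\mu_l$. For $i<l$ the subsheaf $V_i$ cannot be horizontal: otherwise it would descend to a proper nonzero $W\subset\mathcal{F}$ with $\mu_H(W)=\mu_H(V_i)/p>\mu_H(\mathcal{F})$ (the average slope of the top $i$ graded pieces exceeds the total average $p\,\mu_H(\mathcal{F})$), contradicting semistability of $\mathcal{F}$. Hence the induced nonzero $\mathcal{O}_X$-linear map forces $\mu_i=\mu_H^-(V_i)\leq\mu_H^+\big((F_X^*\mathcal{F}/V_i)\otimes\Omega_X^1\big)\leq\mu_{i+1}+\mu_H^+(\Omega_X^1)$. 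Summing the $l-1\leq\rho-1$ gaps gives the claimed spread bound. This connection-plus-descent lemma, essentially due to Shepherd-Barron and Sun, is the step I expect to be the main obstacle: the nonvanishing (where semistability of $\mathcal{F}$ is genuinely used) and the care needed to keep the graded pieces torsion free are the delicate points, whereas the surrounding slope inequalities are formal.

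From the spread bound I would deduce the general one-step inequality $\mu_H^+(F_X^*\mathcal{G})\leq p\,\mu_H^+(\mathcal{G})+(r-1)C$ for an arbitrary rank $r$ torsion free $\mathcal{G}$: applying the exact functor $F_X^*$ to the Harder-Narasimhan filtration of $\mathcal{G}$ gives $\mu_H^+(F_X^*\mathcal{G})\leq\max_i\mu_H^+(F_X^*Q_i)$ over the semistable quotients $Q_i$, and since $\mu_H^-(F_X^*Q_i)\leq p\,\mu_H(Q_i)$ the spread bound yields $\mu_H^+(F_X^*Q_i)\leq p\,\mu_H(Q_i)+(\rank Q_i-1)C\leq p\,\mu_H^+(\mathcal{G})+(r-1)C$. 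Finally I would apply this with $\mathcal{G}=(F_X^{n-1})^*\mathcal{E}$, which again has rank $r$, divide by $p^n$, and telescope:
$$\frac{\mu_H^+((F_X^n)^*\mathcal{E})}{p^n}-\mu_H^+(\mathcal{E})\leq(r-1)C\sum_{j=1}^{n}\frac{1}{p^j}\leq\frac{r-1}{p-1}\,BH^{d-1},$$
using $\mu_H^+(\Omega_X^1)\leq C$ and $\sum_{j\geq1}p^{-j}=1/(p-1)$. This is precisely the asserted inequality.
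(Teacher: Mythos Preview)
Your proposal is correct and essentially reproduces Langer's original argument; note that the paper itself does not give a proof here but simply cites \cite[Corollary 2.5]{Langer1}. The route you take---bounding $\mu_H^+(\Omega_X^1)$ by $BH^{d-1}$ via global generation of $T_X(B)$, then using the canonical connection on $F_X^*\mathcal{F}$ together with Cartier descent to control the spread of the Harder--Narasimhan filtration after one Frobenius pullback, and finally telescoping---is exactly the Shepherd-Barron/Sun/Langer approach that underlies the cited result.

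One small remark on presentation: your intermediate claim $\mu_H^+(F_X^*\mathcal{F})-\mu_H^-(F_X^*\mathcal{F})\leq(\rho-1)\,\mu_H^+(\Omega_X^1)$ is literally false when $\mu_H^+(\Omega_X^1)<0$ (the left side is nonnegative); in that case the descent argument in fact forces $l=1$, so the correct bound is $(\rho-1)\max\{\mu_H^+(\Omega_X^1),0\}$. This does not affect your final inequality, since you immediately replace $\mu_H^+(\Omega_X^1)$ by $C=BH^{d-1}\geq0$, but it is worth stating the intermediate step precisely.
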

\begin{proof}
See \cite[Corollary 2.5]{Langer1}.
\end{proof}

\section{Frobenius amplitude and partial regularity}\label{S3}
This section reviews some definitions and facts of Frobenius amplitude introduced by Arapura in \cite{Ara1} and partial Castelnuovo–Mumford regularity.

Throughout this section, we let $k$ be a field with $\chr(k)=p$ and $X$ be a projective scheme over $k$. A diagram over $\Spec k$ is defined to be a collection of $k$-schemes $X_i$, morphism $X_i\rightarrow X_j$, coherent sheaves $\mathcal{E}_{i,l}$ on $X_i$ and morphisms between the pullbacks and pushforwards of these sheaves.

Now assume that $p=0$. Given a diagram $D$ over $\Spec k$, an arithmetic thickening (or simply just thickening) of it is a finitely generated $\mathbb{Z}$-subalgebra $A\subset k$ and a diagram $\widetilde{D}$ over $\Spec A$, so that $D$ is isomorphic to the fiber product $\widetilde{D}\times_{\Spec A}\Spec k$. We say a thickening $\widetilde{D}_0\rightarrow\Spec A_0$ refines the thickening $\widetilde{D}\rightarrow\Spec A$ if there is a homomorphism $A\rightarrow A_0$ and an isomorphism between $\widetilde{D}_0$ and $\widetilde{D}\times_{\Spec A}\Spec A_0$.

We have the following standard facts (see \cite[Lemma 1.1]{Ara1}).

\begin{lemma}\label{lemma3.1}
Any diagram over $\Spec k$ has a thickening. Any two thickenings over $\Spec k$ have a common refinement.
\end{lemma}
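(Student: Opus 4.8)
The statement to prove is Lemma~\ref{lemma3.1}: every diagram over $\Spec k$ admits an arithmetic thickening, and any two thickenings share a common refinement. The plan is to exploit the fact that a diagram consists of only finitely many pieces of data: finitely many $k$-schemes $X_i$, finitely many morphisms, finitely many coherent sheaves $\mathcal{E}_{i,l}$, and finitely many morphisms between pullbacks and pushforwards of these. Each such piece is defined over $k$ by finitely many elements (coordinates of defining equations, structure constants of the morphisms, transition data and gluing relations for the sheaves, entries of the matrices defining the sheaf morphisms). The key observation is that all of this is \emph{finite} algebraic data living in $k$, so I can descend it to a finitely generated subring.

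For the existence of a thickening, I would first choose affine covers of each $X_i$ and present each $X_i$ by finitely many generators and relations of its coordinate rings over $k$; let $A_0 \subset k$ be the $\mathbb{Z}$-subalgebra generated by all the coefficients appearing in these defining equations. Then I enlarge $A_0$ to a finitely generated $\mathbb{Z}$-algebra $A$ by throwing in: the coefficients defining the morphisms $X_i \to X_j$, the finitely many generators and relations needed to present each coherent sheaf $\mathcal{E}_{i,l}$ (via a finite presentation $\mathcal{O}^{\oplus a} \to \mathcal{O}^{\oplus b} \to \mathcal{E}_{i,l} \to 0$ on each affine chart, together with the gluing data), and the entries of the matrices defining the sheaf morphisms. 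Since a finite union of finitely generated $\mathbb{Z}$-algebras is again finitely generated, $A$ is finitely generated over $\mathbb{Z}$, and sits inside $k$. Over $\Spec A$ one assembles the corresponding schemes $\widetilde{X}_i$, morphisms, and coherent sheaves $\widetilde{\mathcal{E}}_{i,l}$ with their morphisms, giving a diagram $\widetilde{D}$ over $\Spec A$. Because base change $-\times_{\Spec A}\Spec k$ commutes with taking kernels, cokernels, pullbacks, pushforwards along the chosen affine presentations, one checks that $\widetilde{D}\times_{\Spec A}\Spec k \cong D$, completing the construction.

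For the common refinement, suppose $\widetilde{D}_1 \to \Spec A_1$ and $\widetilde{D}_2 \to \Spec A_2$ are two thickenings of the same $D$, with $A_1, A_2 \subset k$ finitely generated over $\mathbb{Z}$. I would take $A_0$ to be the $\mathbb{Z}$-subalgebra of $k$ generated by $A_1$ and $A_2$ together, which is again finitely generated over $\mathbb{Z}$; the inclusions $A_1 \hookrightarrow A_0$ and $A_2 \hookrightarrow A_0$ are the required homomorphisms. One then base-changes $\widetilde{D}_1$ and $\widetilde{D}_2$ along these inclusions to obtain diagrams over $\Spec A_0$. The remaining point is to produce the isomorphism $\widetilde{D}_0 \cong \widetilde{D}_1 \times_{\Spec A_1}\Spec A_0$ (and similarly for index $2$): one defines $\widetilde{D}_0 := \widetilde{D}_1 \times_{\Spec A_1}\Spec A_0$, and the content is that this also refines $\widetilde{D}_2$. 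Both $\widetilde{D}_1 \times \Spec A_0$ and $\widetilde{D}_2 \times \Spec A_0$ restrict to the same diagram $D$ over the generic fiber $\Spec k$, and the standard spreading-out principle says that an isomorphism of finitely-presented objects defined over $k = \varinjlim$ (finitely generated subrings) is already defined after enlarging the base ring to some larger finitely generated subring; I would absorb any such further generators into $A_0$ at the outset so that the two base changes agree over $\Spec A_0$.

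The genuinely delicate step is this last one: matching the two thickenings over the common refinement, rather than merely having both map to $k$. The existence half is essentially bookkeeping — collect finitely many coefficients and invoke finite generation — but the refinement half requires the spreading-out of isomorphisms, i.e.\ the fact that an isomorphism of coherent sheaves (or of schemes, or a commuting relation among morphisms) that holds after base change to $k$ must already hold after base change to a \emph{sufficiently large finitely generated} subring, because the finitely many equations witnessing the isomorphism and its inverse involve only finitely many denominators and coefficients from $k$. I expect the cleanest route is to cite the standard limit/spreading-out formalism (e.g.\ \cite[Lemma 1.1]{Ara1}, as the excerpt already does) rather than to reprove it from scratch, and to emphasize that all constructions are stable under the filtered-colimit presentation $k = \varinjlim A_\lambda$ of $k$ as the union of its finitely generated $\mathbb{Z}$-subalgebras.
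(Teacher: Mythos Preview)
Your proposal is correct and in fact supplies far more detail than the paper does: the paper gives no proof of Lemma~\ref{lemma3.1} at all, merely stating that these are ``standard facts'' and citing \cite[Lemma~1.1]{Ara1}. Your sketch of the spreading-out argument (collect the finitely many coefficients defining the schemes, morphisms, sheaves, and sheaf maps into a finitely generated $\mathbb{Z}$-subalgebra; for the common refinement take the subring generated by $A_1$ and $A_2$ and spread out the isomorphism over $k$ to a large enough finitely generated subring) is exactly the content behind that citation, and you yourself note at the end that citing \cite[Lemma~1.1]{Ara1} is the cleanest route---which is precisely what the paper does.
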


When $p>0$, we denote by $F_X$ the absolute Frobenius morphism of $X$. Given a coherent sheaf $\mathcal{E}$ on $X$, denote $(F_X^n)^*\mathcal{E}$ by $\mathcal{E}^{(p^n)}$. For a thickening $(\widetilde{X}, \widetilde{\mathcal{E}})\rightarrow\Spec A$ of $(X, \mathcal{E})$, we write $p_q=\chr(A/q)$, $X_q$ for the fiber and $\mathcal{E}_q=\widetilde{\mathcal{E}}|_{X_q}$ for each closed point $q\in \Spec A$.

\subsection{Frobenius amplitude}\label{S3.1}
The notion of Frobenius amplitude was given by Arapura which provides a cohomological measure of the positivity of a sheaf.
\begin{definition}
Let $\mathcal{F}$ be a coherent sheaf on $X$. If $p>0$, the Frobenius amplitude (or simply F-amplitude) $\phi_a(\mathcal{F})$ of $\mathcal{F}$ is the smallest nonnegative integer such that for any locally free sheaf $\mathcal{E}$, there exists an integer $n_0$ such that $H^i(X, \mathcal{E}\otimes\mathcal{F}^{(p^n)})=0$ for all $i>\phi_a(\mathcal{F})$ and $n>n_0$. If $p=0$, the F-amplitude $\phi_a(\mathcal{F})$ is defined to be the minimum of $\max_q\phi_a(\mathcal{F}_q)$ over all thickenings. We say $\mathcal{F}$ is F-ample if $\phi_a(\mathcal{F})=0$.
\end{definition}

The following lemma shows that F-amplitude can be only tested by tensoring with ample line bundles.
\begin{lemma}\label{lemma3.3}
Assume that $p>0$. Fix an ample line bundle $\mathcal{L}$. Then $\phi_a(\mathcal{F})\leq N$ if and only if for any integer $m$ there exists an integer $n_0$ such that $$H^i(X, \mathcal{L}^m\otimes\mathcal{F}^{(p^n)})=0$$ for all $i>N$ and $n>n_0$.
\end{lemma}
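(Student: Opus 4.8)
The forward implication is immediate: if $\phi_a(\mathcal{F})\leq N$, then applying the definition of F-amplitude to the locally free sheaf $\mathcal{L}^m$ produces, for each $m$, an integer $n_0$ with $H^i(X,\mathcal{L}^m\otimes\mathcal{F}^{(p^n)})=0$ for $i>N$ and $n>n_0$. So my plan is to concentrate on the converse, where the task is to upgrade vanishing tested only against the powers $\mathcal{L}^m$ ($m\in\mathbb{Z}$) to vanishing against an arbitrary locally free sheaf $\mathcal{E}$.

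I would run a descending induction on the cohomological degree $i$, with inductive statement $P(i)$: \emph{for every locally free sheaf $\mathcal{E}$ on $X$ there is an $n_0$ with $H^i(X,\mathcal{E}\otimes\mathcal{F}^{(p^n)})=0$ for all $n>n_0$.} For $i>d=\dim X$ the statement $P(i)$ holds vacuously, and these provide the base of the induction. The goal is to descend to $P(i)$ for every $i$ with $N<i\leq d$, which is exactly the assertion $\phi_a(\mathcal{F})\leq N$ after taking a maximum of thresholds over the finitely many such $i$.

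The key step, supplying the inductive mechanism, is to present a given locally free $\mathcal{E}$ as a quotient of a sum of negative powers of $\mathcal{L}$. By Serre's theorem $\mathcal{E}\otimes\mathcal{L}^m$ is globally generated for $m\gg0$, giving a surjection $(\mathcal{L}^{-m})^{\oplus s}\twoheadrightarrow\mathcal{E}$; writing $\mathcal{P}$ for the source and $K$ for the kernel yields a short exact sequence $0\to K\to\mathcal{P}\to\mathcal{E}\to0$ in which $K$ is again locally free (the sequence splits locally because $\mathcal{E}$ is locally free). Since every term is locally free, tensoring by $\mathcal{F}^{(p^n)}$ stays exact, and the relevant piece of the long exact cohomology sequence reads $H^i(\mathcal{P}\otimes\mathcal{F}^{(p^n)})\to H^i(\mathcal{E}\otimes\mathcal{F}^{(p^n)})\to H^{i+1}(K\otimes\mathcal{F}^{(p^n)})$. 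For $i>N$ and $n\gg0$ the left-hand term vanishes by the hypothesis applied to each summand $\mathcal{L}^{-m}$ — this is precisely where the freedom to use negative $m$ is consumed — while the right-hand term vanishes for $n\gg0$ by $P(i+1)$ applied to the locally free sheaf $K$. Hence $H^i(\mathcal{E}\otimes\mathcal{F}^{(p^n)})=0$ for $n\gg0$, establishing $P(i)$.

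The only genuine bookkeeping concerns the thresholds: for fixed $\mathcal{E}$ and fixed $i$ the induction unwinds into a finite chain of sheaves (the degrees rise until they exceed $d$), so one takes the maximum of the finitely many resulting $n_0$'s, and finally a maximum over the finitely many degrees $N<i\leq d$. The conceptual point I expect to be the crux is the decision to resolve $\mathcal{E}$ \emph{downward} by a surjection from negative powers of $\mathcal{L}$ rather than to embed it into positive powers: only the surjection relates $H^i(\mathcal{E})$ to the strictly higher degree $H^{i+1}(K)$, which is what makes the descending induction close, and only then does the hypothesis for the negative powers $\mathcal{L}^{-m}$ do exactly the work required on the free term $\mathcal{P}$.
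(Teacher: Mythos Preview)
Your argument is correct. The paper does not supply its own proof here but simply refers to \cite[Corollary~2.3]{Ara1}; the descending induction you run---presenting an arbitrary locally free $\mathcal{E}$ as a quotient of a direct sum $\mathcal{L}^{-m}$ and using the long exact sequence to push the problem up to degree $i+1$ on the locally free kernel---is the standard device for such statements and is essentially Arapura's argument.
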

\begin{proof}
See \cite[Corollary 2.3]{Ara1}.
\end{proof}

The characterization of F-ampleness of a line bundle is due to \cite[Lemma 2.4]{Ara1}:
\begin{lemma}\label{lemma3.4}
A line bundle is F-ample if and only if it is ample.
\end{lemma}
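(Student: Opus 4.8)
The plan is to prove the two implications separately, with the forward direction (ample $\Rightarrow$ F-ample) being immediate and the converse being the real content, which I would reduce to the Nakai--Moishezon criterion. Throughout I use that for a line bundle $\mathcal{L}$ the Frobenius pullback is simply a tensor power, $\mathcal{L}^{(p^n)}=(F_X^n)^*\mathcal{L}=\mathcal{L}^{\otimes p^n}$. If $\mathcal{L}$ is ample, then Serre vanishing gives, for every locally free $\mathcal{E}$, an integer $n_0$ with $H^i(X,\mathcal{E}\otimes\mathcal{L}^{\otimes l})=0$ for all $i>0$ and $l\geq n_0$; specializing to $l=p^n$ with $p^n\geq n_0$ yields $\phi_a(\mathcal{L})=0$, so $\mathcal{L}$ is F-ample. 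The work lies entirely in the converse.

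For the converse, fix once and for all a very ample line bundle $\mathcal{A}$ on $X$. By Lemma \ref{lemma3.3} the hypothesis $\phi_a(\mathcal{L})=0$ is equivalent to: for every integer $m$ there is $n_0$ with $H^i(X,\mathcal{A}^{m}\otimes\mathcal{L}^{\otimes p^n})=0$ for all $i>0$ and $n>n_0$. The first step I would carry out is to upgrade F-ampleness from locally free test sheaves to arbitrary coherent sheaves: given a coherent $\mathcal{G}$, resolve it by finite direct sums of negative powers of $\mathcal{A}$, truncate after $\dim X$ steps, and feed the resulting short exact sequences into the F-ampleness vanishing for each (locally free) term; a dimension-shifting argument then gives $H^i(X,\mathcal{G}\otimes\mathcal{L}^{\otimes p^n})=0$ for all $i>0$ and $n\gg0$. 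The point of this is the second step: F-ampleness descends to integral closed subschemes. Indeed, if $i\colon V\hookrightarrow X$ is such a subscheme, then by the projection formula $H^i(V,(\mathcal{A}|_V)^{m}\otimes(\mathcal{L}|_V)^{\otimes p^n})=H^i(X,(\mathcal{A}^{m}\otimes i_*\mathcal{O}_V)\otimes\mathcal{L}^{\otimes p^n})$, and the coherent version of F-ampleness applied to $\mathcal{G}=\mathcal{A}^m\otimes i_*\mathcal{O}_V$ makes this vanish for $n\gg0$; hence $\mathcal{L}|_V$ is F-ample on $V$.

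It then suffices, by the Nakai--Moishezon criterion, to show that $(\mathcal{L}^{\dim V}\cdot V)>0$ for every integral closed subscheme $V\subseteq X$, which I would prove by induction on $e=\dim V$. For any such $V$, higher cohomology vanishing gives $\chi(V,(\mathcal{L}|_V)^{\otimes p^n})=h^0\geq0$ for $n\gg0$, and comparing with the leading term $\tfrac{p^{ne}}{e!}(\mathcal{L}^e\cdot V)$ of asymptotic Riemann--Roch forces $(\mathcal{L}^e\cdot V)\geq0$; the task is to rule out equality. When $e=1$ (the base case) I would take $\mathcal{A}$ of degree at least $2$ on the curve $V$ and apply the $m=-1$ vanishing: a bundle $\mathcal{A}^{-1}\otimes(\mathcal{L}|_V)^{\otimes p^n}$ of degree $\leq0$ on an integral curve has $H^1\neq0$, contradicting F-ampleness, so $\deg(\mathcal{L}|_V)>0$. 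For $e\geq2$, Bertini lets me choose a general, hence integral, divisor $D$ in $V$ linearly equivalent to $\mathcal{A}|_V$, of dimension $e-1$; by induction $(\mathcal{L}^{e-1}\cdot D)=(\mathcal{A}\cdot\mathcal{L}^{e-1}\cdot V)>0$. If $(\mathcal{L}^e\cdot V)=0$, then the expansion of $\chi(V,\mathcal{A}^{-1}\otimes(\mathcal{L}|_V)^{\otimes p^n})$ has vanishing leading term and next term $-\tfrac{p^{n(e-1)}}{(e-1)!}(\mathcal{A}\cdot\mathcal{L}^{e-1}\cdot V)\to-\infty$, so $\chi<0$ for $n\gg0$, contradicting the $m=-1$ vanishing that forces $\chi=h^0\geq0$. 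Hence $(\mathcal{L}^e\cdot V)>0$, completing the induction.

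The main obstacle is exactly this passage from cohomological positivity along the sparse Frobenius subsequence $\{p^n\}$ to genuine numerical positivity: the naive attempt to interpolate the missing twists $\mathcal{L}^{\otimes m}$ fails because the vanishing threshold degrades as the gap grows, so one cannot directly deduce Serre's criterion. Routing through Nakai--Moishezon circumvents this, but then the delicate point becomes the strict inequality --- ruling out $(\mathcal{L}^{\dim V}\cdot V)=0$ --- which is what forces the combination of restriction to subvarieties, the auxiliary $\mathcal{A}^{-1}$-twist, and the Bertini induction above. Verifying that the coherent extension of F-ampleness and the Bertini irreducibility step both survive in positive characteristic is the part I would treat most carefully.
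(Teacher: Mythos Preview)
The paper does not prove this lemma; it simply cites \cite[Lemma~2.4]{Ara1}. Your Nakai--Moishezon route is a valid strategy, but the inductive step contains a genuine gap. When $(\mathcal{L}^e\cdot V)=0$ you assert that the $p^{n(e-1)}$-coefficient of $\chi(V,\mathcal{A}^{-1}\otimes(\mathcal{L}|_V)^{p^n})$ is exactly $-\tfrac{1}{(e-1)!}(\mathcal{A}\cdot\mathcal{L}^{e-1}\cdot V)$. It is not: writing $\chi(V,\mathcal{L}|_V^{m})=a_{e-1}m^{e-1}+\cdots$ (since the top coefficient vanishes) and using the divisor sequence for $D\in|\mathcal{A}|_V|$, the coefficient in question is $a_{e-1}-\tfrac{1}{(e-1)!}(\mathcal{A}\cdot\mathcal{L}^{e-1}\cdot V)$, and nothing forces $a_{e-1}=0$. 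If $a_{e-1}$ is large and positive your Euler characteristic tends to $+\infty$, not $-\infty$, and no contradiction results. The repair is easy---replace $\mathcal{A}^{-1}$ by $\mathcal{A}^{-M}$ so the coefficient becomes $a_{e-1}-\tfrac{M}{(e-1)!}(\mathcal{A}\cdot\mathcal{L}^{e-1}\cdot V)$, negative for $M\gg0$, and F-ampleness still gives the required vanishing for that $M$. A smaller slip: ``degree $\le 0$ on an integral curve forces $H^1\neq0$'' is false ($\mathcal{O}_{\mathbb{P}^1}$ and $\mathcal{O}_{\mathbb{P}^1}(-1)$); you need degree $\le -2$, which your hypothesis $\deg(\mathcal{A}|_V)\ge 2$ does secure once $\deg(\mathcal{L}|_V)=0$. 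You also never treat characteristic zero; the reduction via thickenings is routine but should be stated.

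It is worth noting that a far shorter argument avoids all of this machinery. Fix a very ample $\mathcal{A}$ and apply F-ampleness to the finitely many test bundles $\mathcal{E}=\mathcal{A}^{-j}$, $2\le j\le \dim X+1$: for $n\gg0$ one gets $H^i(X,\mathcal{L}^{p^n}\otimes\mathcal{A}^{-1-i})=0$ for all $i>0$, i.e.\ $\mathcal{L}^{p^n}\otimes\mathcal{A}^{-1}$ is $0$-regular and hence globally generated. Then $\mathcal{L}^{p^n}=(\mathcal{L}^{p^n}\otimes\mathcal{A}^{-1})\otimes\mathcal{A}$ is ample, and therefore so is $\mathcal{L}$. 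This bypasses Nakai--Moishezon, the coherent upgrade, restriction to subvarieties, and Bertini entirely, and is presumably closer to what Arapura does.
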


For higher rank ample vector bundles, \cite[Theorem 5 and Proposition D.3]{Ara1} gave estimates on their F-amplitude:
\begin{theorem}\label{ample}
Assume that $X$ is integral with $\dim X>0$. Let $\mathcal{E}$ be an ample vector bundle on $X$ of rank $r$. Then $\phi_a(\mathcal{E})<\dim X$. Furthermore, if $p=0$ we have $\phi_a(\mathcal{E})<r$.
\end{theorem}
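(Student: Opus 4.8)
The plan is to pass to the projectivization and reduce every cohomological statement about $\mathcal{E}$ to one about the ample tautological line bundle, where Lemma \ref{lemma3.4} applies. Set $Y=\mathbb{P}(\mathcal{E})$ with projection $\pi\colon Y\to X$ and tautological quotient line bundle $\xi=\mathcal{O}_{\mathbb{P}(\mathcal{E})}(1)$. By definition $\mathcal{E}$ is ample precisely when $\xi$ is ample, so by Lemma \ref{lemma3.4} $\xi$ is $F$-ample, i.e.\ $\phi_a(\xi)=0$; note also that $\dim Y=\dim X+r-1$ and that the fibres of $\pi$ are copies of $\mathbb{P}^{r-1}$. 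The basic dictionary is $\pi_*\xi^{\otimes m}=\Sym^m\mathcal{E}$ with $R^{>0}\pi_*\xi^{\otimes m}=0$ for $m\geq0$, so that for any locally free $\mathcal{M}$ on $X$ and any $m\ge0$ one has $H^i(Y,\pi^*\mathcal{M}\otimes\xi^{\otimes m})\cong H^i(X,\mathcal{M}\otimes\Sym^m\mathcal{E})$; taking $m=p^n$ and invoking the $F$-ampleness (equivalently Serre vanishing) of $\xi$ gives $H^i(X,\mathcal{M}\otimes\Sym^{p^n}\mathcal{E})=0$ for all $i>0$ and $n\gg0$.

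For the first assertion $\phi_a(\mathcal{E})<\dim X$, valid in any characteristic, Lemma \ref{lemma3.3} lets me test against powers of a single ample line bundle, and by Grothendieck vanishing the only degree not automatically zero is the top one $i=\dim X$. I would therefore show $H^{\dim X}(X,\mathcal{M}\otimes\mathcal{E}^{(p^n)})=0$ for $n\gg0$, the point being to convert the cheap vanishing for symmetric powers into vanishing for the Frobenius powers that $\phi_a$ actually sees. This is carried out through the $n$-fold relative Frobenius $F\colon Y\to W_n:=\mathbb{P}(\mathcal{E}^{(p^n)})$ over $X$, which is finite flat with $F^*\mathcal{O}_{W_n}(1)=\xi^{\otimes p^n}$ and $q_n\circ F=\pi$, where $q_n\colon W_n\to X$. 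The projection formula yields $H^i\!\big(W_n,(\mathcal{O}_{W_n}(1)\otimes q_n^*\mathcal{M})\otimes F_*\mathcal{O}_Y\big)\cong H^i(Y,\xi^{\otimes p^n}\otimes\pi^*\mathcal{M})\cong H^i(X,\mathcal{M}\otimes\Sym^{p^n}\mathcal{E})$, which vanishes for $i>0$ and $n\gg0$. One then analyses the filtration of the locally free sheaf $F_*\mathcal{O}_Y$ on $W_n$ by the twists $\mathcal{O}_{W_n}(-j)\otimes q_n^*C_j$: the piece $j=0$ (with $C_0=\mathcal{O}_X$) computes exactly $H^{\dim X}(X,\mathcal{M}\otimes\mathcal{E}^{(p^n)})$, while the contributions of the remaining pieces to the total top cohomology are controlled, via the Leray spectral sequence for $q_n$, by the fibre dimension $r-1$. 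Extracting the $j=0$ term gives $\phi_a(\mathcal{E})\leq\dim X-1$.

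For the sharper bound $\phi_a(\mathcal{E})<r$ in characteristic $0$, I would invoke the arithmetic definition: spread $(X,\mathcal{E})$ out over a finitely generated $\mathbb{Z}$-subalgebra $A$ (Lemma \ref{lemma3.1}), use that ampleness is an open condition so that $\mathcal{E}_q$ is ample on the positive-characteristic fibre $X_q$ for $q$ in a dense open subset, and prove $\phi_a(\mathcal{E}_q)\le r-1$ there. In positive characteristic this improvement comes out of the same relative-Frobenius analysis: the graded pieces of $F_*\mathcal{O}_Y$ are indexed by $0\le j\le (p^n-1)(r-1)$, and fibrewise on the $\mathbb{P}^{r-1}$'s only the degrees $0$ and $r-1$ of $Rq_{n*}$ survive (the intermediate twists $\mathcal{O}(1-j)$ with $2\le j\le r$ are acyclic), so that all cohomology of $\mathcal{E}_q^{(p^n)}\otimes\mathcal{M}$ above degree $r-1$ is forced to vanish for $n\gg0$. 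Taking the minimum over thickenings then gives $\phi_a(\mathcal{E})\le r-1$.

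The principal difficulty, and the step requiring genuine care rather than formal manipulation, is the comparison between $\Sym^{p^n}\mathcal{E}$, for which vanishing is essentially Serre vanishing on $Y$, and $\mathcal{E}^{(p^n)}$, which is what $\phi_a$ measures; these sheaves are genuinely different in characteristic $p$. Isolating the single graded piece of $F_*\mathcal{O}_Y$ that computes $H^\bullet(X,\mathcal{E}^{(p^n)}\otimes\mathcal{M})$ is delicate because neighbouring pieces, such as the weight $-1$ summand occurring already for $\mathbb{P}^1$-fibres, can carry nonvanishing top cohomology on $X$ and cannot be discarded naively; one must either organise the extraction through the spectral sequence and induct on $n$, or reduce the rank via the tautological subbundle sequence and induct on $r$. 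Controlling the constant $\tfrac{r-1}{p-1}BH^{d-1}$ in the Frobenius slope estimate of Theorem \ref{Langer2} is a tempting alternative for the top-degree statement, but it is too lossy to reach the sharp fibre-dimension bound, which is why the relative-Frobenius filtration is the preferable route.
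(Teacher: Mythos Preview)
The paper does not give a proof of this statement; it merely cites \cite[Theorem~5 and Proposition~D.3]{Ara1}. So there is no in-paper argument to compare against, and your proposal has to be measured against Arapura's original.

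Your framework via $\pi\colon Y=\mathbb{P}(\mathcal{E})\to X$ and the $n$-fold relative Frobenius $F\colon Y\to W_n=\mathbb{P}(\mathcal{E}^{(p^n)})$ is set up correctly, and the identifications $H^i(Y,\pi^*\mathcal{M}\otimes\xi^{p^n})\cong H^i(X,\mathcal{M}\otimes\Sym^{p^n}\mathcal{E})$ and $F^*\mathcal{O}_{W_n}(1)=\xi^{p^n}$ are fine. But the step you yourself call ``the principal difficulty'' is the whole content of the theorem, and you do not carry it out: you only say one ``must'' organise the extraction of the $j=0$ graded piece by a spectral sequence or by induction, without doing either. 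Note that $\mathcal{O}_{W_n}$ sits in $F_*\mathcal{O}_Y$ as a \emph{sub}sheaf and projective bundles are not in general relatively Frobenius split (indeed $\phi_a(T_{\mathbb{P}^2})=1$, not $0$), so vanishing of the total cohomology does not descend to that piece. For the bound $\phi_a(\mathcal{E})<r$ your claim that ``all cohomology above degree $r-1$ is forced to vanish'' is unjustified: the graded piece with $j=1$ gives $\mathcal{O}_{W_n}(0)$ and hence contributes $H^i(X,\mathcal{M}\otimes C_1)$ in every degree $i\le d$, and nothing in your outline removes it.

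For $\phi_a(\mathcal{E})<\dim X$ there is in fact a much shorter route that avoids the filtration entirely. By Lemma~\ref{lemma3.3} one may test against a single line bundle $\mathcal{N}$, and by Serre duality $H^d(X,\mathcal{N}\otimes\mathcal{E}^{(p^n)})\cong\Hom\big(\mathcal{E}^{(p^n)},\omega_X\otimes\mathcal{N}^{-1}\big)^{*}$. Ampleness of $\mathcal{E}$ gives an $\epsilon>0$ with $\xi-\epsilon\,\pi^*H$ nef on $\mathbb{P}(\mathcal{E})$; pulling back along the morphism $\tau\colon X\to\mathbb{P}(\mathcal{E})$ associated with any rank-one quotient of $\mathcal{E}^{(p^n)}$ (which satisfies $\pi\circ\tau=F_X^{\,n}$) shows that every line-bundle quotient of $\mathcal{E}^{(p^n)}$ has $H$-degree at least $\epsilon\,p^nH^d$. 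Hence $\Hom(\mathcal{E}^{(p^n)},\mathcal{M})=0$ for any fixed line bundle $\mathcal{M}$ once $n\gg0$, and the top-degree vanishing follows. This is the mechanism behind Arapura's argument and sidesteps the extraction problem completely. The sharper bound $\phi_a(\mathcal{E})<r$ in characteristic zero genuinely needs an additional Le~Potier--type input after spreading out, which your sketch does not supply.
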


The concept of Frobenius amplitude behave well under base change.
\begin{lemma}\label{ext1}
Let K be an extension of $k$ and $\mathcal{E}$ a coherent sheaf on $X$. Then we have $\phi_a(\mathcal{E})=\phi_a(\mathcal{E}\otimes_kK)$.
\end{lemma}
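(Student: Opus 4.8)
The plan is to prove that Frobenius amplitude is invariant under field extension, i.e. $\phi_a(\mathcal{E}) = \phi_a(\mathcal{E} \otimes_k K)$, by handling the two characteristic regimes separately. In positive characteristic, the key point is that the formation of the relevant cohomology groups commutes with the flat base change $\Spec K \to \Spec k$. First I would reduce, via Lemma \ref{lemma3.3}, to testing the amplitude against tensor powers of a single ample line bundle $\mathcal{L}$: one has $\phi_a(\mathcal{E}) \leq N$ if and only if for every $m$ there is $n_0$ with $H^i(X, \mathcal{L}^m \otimes \mathcal{E}^{(p^n)}) = 0$ for all $i > N$ and $n > n_0$. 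Writing $X_K = X \times_k K$ with projection $g \colon X_K \to X$, I would note that $g$ is flat (indeed faithfully flat), that $g^* \mathcal{L}$ is ample on $X_K$, and that Frobenius commutes with base change so that $(g^*\mathcal{E})^{(p^n)} \cong g^*(\mathcal{E}^{(p^n)})$.

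The crux of the positive-characteristic argument is then the flat base change isomorphism
\begin{equation}\label{eq:basechange}
H^i(X_K, g^*(\mathcal{L}^m \otimes \mathcal{E}^{(p^n)})) \cong H^i(X, \mathcal{L}^m \otimes \mathcal{E}^{(p^n)}) \otimes_k K.
\end{equation}
Because $K$ is faithfully flat over $k$, the group on the left vanishes if and only if the group on the right does. Hence the defining vanishing condition for $\phi_a(\mathcal{E}) \leq N$ holds on $X$ exactly when the corresponding condition holds on $X_K$, and applying Lemma \ref{lemma3.3} on both sides — using $\mathcal{L}$ downstairs and $g^*\mathcal{L}$ upstairs — gives $\phi_a(\mathcal{E}) \leq N \iff \phi_a(g^*\mathcal{E}) \leq N$ for every $N$, which is the desired equality. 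I would also remark that $g^*\mathcal{E} = \mathcal{E} \otimes_k K$ under the identification of sheaves on $X_K$, so the two formulations agree.

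For the characteristic-zero case, the definition of $\phi_a$ is the minimum over all arithmetic thickenings of $\max_q \phi_a(\mathcal{E}_q)$, so I would argue that the extension $K/k$ does not change the set of available reductions mod $p$. Given a thickening $(\widetilde{X}, \widetilde{\mathcal{E}}) \to \Spec A$ of $(X, \mathcal{E})$ with $A \subset k$, the same algebra $A$ sits inside $K$ and provides a thickening of $(X_K, \mathcal{E} \otimes_k K)$ with identical fibers $\widetilde{\mathcal{E}}_q$ over closed points $q \in \Spec A$; conversely any thickening of the extended data over some $B \subset K$ is compatible, after passing to a common refinement guaranteed by Lemma \ref{lemma3.1}, with a thickening of the original data. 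Since the fiberwise F-amplitudes $\phi_a(\mathcal{E}_q)$ are computed on fibers that are unchanged by the extension, the two minima coincide.

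The main obstacle I anticipate is the care needed in \eqref{eq:basechange}: one must ensure the base change isomorphism applies to the twisted Frobenius pullbacks $\mathcal{L}^m \otimes \mathcal{E}^{(p^n)}$ uniformly in $n$, and that the compatibility $(g^*\mathcal{E})^{(p^n)} \cong g^*(\mathcal{E}^{(p^n)})$ is valid — this rests on the fact that the absolute Frobenius on $X_K$ is compatible with that on $X$ over the perfect (or at least Frobenius-compatible) base, so that the $n_0$ appearing in the amplitude definition can be chosen the same on both sides. In the characteristic-zero part, the subtlety is purely bookkeeping about thickenings, and Lemma \ref{lemma3.1} does all the real work of aligning thickenings of $(X,\mathcal{E})$ with those of its extension.
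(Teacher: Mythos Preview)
Your argument is correct and is the standard one; the paper itself gives no proof but simply cites \cite[Lemma~C.5]{Ara1} and \cite[Lemma~3.8]{Ke}, where precisely this flat-base-change argument in positive characteristic, together with thickening bookkeeping in characteristic zero, is carried out.

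Two small clarifications. First, your worry about the compatibility $(g^*\mathcal{E})^{(p^n)}\cong g^*(\mathcal{E}^{(p^n)})$ is unnecessary: the absolute Frobenius is functorial for \emph{all} morphisms of $\mathbb{F}_p$-schemes (any ring homomorphism commutes with $p$-th powers), so $g\circ F_{X_K}=F_X\circ g$ holds with no perfectness hypothesis on the base. Second, in characteristic zero the phrase ``fibers that are unchanged by the extension'' is slightly too quick. When you take a common refinement over some $C\subset K$ of a thickening over $A\subset k$ and one over $B\subset K$, the fibres over closed points of $\Spec C$ are only base changes of the fibres over $\Spec A$ (and over $\Spec B$) along finite residue-field extensions; you must invoke the positive-characteristic case once more to conclude their F-amplitudes agree. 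One then passes back to a localization $A_0\subset k$ of $A$ using that $\Spec C\to\Spec A$ is dominant and hence has image containing a dense open, obtaining a thickening of $(X,\mathcal{E})$ over $A_0\subset k$ whose fibrewise maximum is bounded by that over $\Spec B$. With these points made explicit the argument is complete.
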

\begin{proof}
The equality is proved in \cite[Lemma C.5.]{Ara1} and \cite[Lemma 3.8]{Ke}.
\end{proof}

The following theorem proved by Keeler is a generalization of Fujita's vanishing theorem.
\begin{theorem}\label{Fujita1}
Let $\mathcal{L}$ be an ample line bundle on $X$. Given any coherent sheaf $\mathcal{F}$ on $X$, there exists an integer $m(\mathcal{F}, \mathcal{L})$ such that
$$H^i(X, \mathcal{F}\otimes\mathcal{L}^m\otimes\mathcal{G})=0$$ for all $i>t$, $m\geq m(\mathcal{F}, \mathcal{L})$ and any locally free sheaf $\mathcal{G}$ on $X$ with $\phi_a(\mathcal{G})\leq t$.
\end{theorem}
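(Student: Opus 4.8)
The plan is to prove the single uniform statement that $H^i(X,\mathcal{L}^m\otimes\mathcal{G})=0$ for all $i>t$, all $m\ge m_0$ and all locally free $\mathcal{G}$ with $\phi_a(\mathcal{G})\le t$, where $m_0=m_0(X,\mathcal{L},t)$ is independent of $\mathcal{G}$; the case of a general $\mathcal{F}$ then follows formally. Indeed, I would resolve $\mathcal{F}$ by finite direct sums of negative twists $\mathcal{L}^{-a}$, break the resolution into short exact sequences, and chase the long exact sequences: since cohomology vanishes above degree $\dim X$, only finitely many twists $a$ intervene, so uniform vanishing of $H^{>t}(\mathcal{L}^{m'}\otimes\mathcal{G})$ for $m'\ge m_0$ forces $H^{>t}(\mathcal{F}\otimes\mathcal{L}^m\otimes\mathcal{G})=0$ for $m\ge m_0+\max a$. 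I would stress at the outset that the family $\{\mathcal{G}:\phi_a(\mathcal{G})\le t\}$ is \emph{unbounded} (it contains all the $\mathcal{L}^{p^n}$), so this is a genuine Fujita-type rather than a Serre-type statement: the uniformity in $\mathcal{G}$ cannot come from boundedness of the family and must instead be extracted from the Frobenius structure.

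For the core statement in characteristic $p>0$, the input from positivity is Lemma \ref{lemma3.3}: for each fixed twist $s$ one has $H^i(X,\mathcal{L}^s\otimes\mathcal{G}^{(p^n)})=0$ for $i>t$ and $n\gg0$. This controls only high Frobenius levels and only non-uniformly, so the real work is to descend to level $0$ with a threshold uniform in both $n$ and $\mathcal{G}$. My main tool would be the projection formula for the absolute Frobenius $F$: since $F$ is finite, $H^i(X,F^*\mathcal{A})=H^i(X,\mathcal{A}\otimes F_*\mathcal{O}_X)$, and applying this to $\mathcal{A}=\mathcal{L}^m\otimes\mathcal{G}^{(p^{n-1})}$ relates level $n$ (twist $mp$) to level $n-1$ (tensored with $F_*\mathcal{O}_X$). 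Crucially, there is a constant $e_0=e_0(X,\mathcal{L})$ and a surjection $(\mathcal{L}^{-e_0})^{\oplus N}\twoheadrightarrow F_*\mathcal{O}_X$; tensoring its kernel sequence by $\mathcal{L}^m\otimes\mathcal{G}^{(p^{n-1})}$ (exact, since we twist by a line bundle and a locally free sheaf) yields a long exact sequence bounding level $n$ in terms of level $n-1$ with a fixed twist-shift governed by $e_0$. Iterating this descent, starting from the high-level vanishing of Lemma \ref{lemma3.3}, I would reach level $0$; because every shift constant and every syzygy involved is intrinsic to $(X,\mathcal{L})$, the resulting $m_0$ is independent of $\mathcal{G}$.

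In characteristic $0$, $\phi_a$ is defined through arithmetic thickenings, so I would spread $(X,\mathcal{L},\mathcal{F})$ out over a finitely generated $\mathbb{Z}$-subalgebra $A\subset k$ using Lemma \ref{lemma3.1}, apply the positive-characteristic statement on the closed fibres with its uniform bound, and descend via flat base change together with Lemma \ref{ext1}. The key point is that the characteristic-$p$ threshold depends only on $(X,\mathcal{L})$ and hence survives the passage to the generic fibre uniformly over the sheaves $\mathcal{G}$.

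The main obstacle is exactly this uniformity over the unbounded family of $\mathcal{G}$: ordinary Serre or Castelnuovo--Mumford regularity produces a twist threshold depending on $\mathcal{G}$, which is useless here, so everything hinges on the Frobenius descent producing twist-shifts and syzygies that are intrinsic to $(X,\mathcal{L})$ and entirely independent of $\mathcal{G}$ and of the level $n$. Two technical points need care: controlling the kernel (syzygy) terms appearing in the descent, which I expect to demand a nested induction feeding the statement for $\mathcal{O}_X$ back into the statement for the syzygy sheaf; and verifying that $F_*\mathcal{O}_X$ has uniformly bounded negativity on the possibly singular projective scheme $X$, so that the constant $e_0$ genuinely exists.
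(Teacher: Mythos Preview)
The paper does not give its own proof of this statement; it simply cites \cite[Theorem~1.4]{Ke}. So the comparison is between your sketch and Keeler's argument, which the paper treats as a black box. Keeler's actual route goes through his notion of ample filters together with the partial Castelnuovo--Mumford regularity recalled in Section~\ref{S3.2}; the uniformity over $\mathcal{G}$ ultimately comes from a $t$-regularity bound, not from a Frobenius descent of the type you outline.

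Your reduction to $\mathcal{F}=\mathcal{O}_X$ by resolving $\mathcal{F}$ with twists of $\mathcal{L}$, and your spreading-out plan for characteristic $0$, are both reasonable. The genuine gap is in the core characteristic-$p$ step. The sequence $0\to K\to(\mathcal{L}^{-e_0})^{\oplus N}\to F_*\mathcal{O}_X\to 0$, tensored with $\mathcal{L}^m\otimes\mathcal{G}^{(p^{n-1})}$ and combined with $H^i(F^*\mathcal{A})=H^i(\mathcal{A}\otimes F_*\mathcal{O}_X)$, lets you bound $H^i(\mathcal{L}^{mp}\otimes\mathcal{G}^{(p^n)})$ in terms of cohomology at level $n-1$: this is \emph{ascent}, not descent. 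To pass from the high-level vanishing of Lemma~\ref{lemma3.3} down to level~$0$ you need the reverse implication, and the long exact sequence does not provide it: knowing the middle term (level~$n$) vanishes only tells you that two groups at level $n-1$ are isomorphic, not that either one vanishes. If instead you use the injection $\mathcal{O}_X\hookrightarrow F_*\mathcal{O}_X$, the boundary map shifts the cohomological degree down by one, forcing you to control $H^{t}$, which lies outside the range $i>t$ you have any grip on. Compounding this, the threshold $n_0$ in Lemma~\ref{lemma3.3} depends on both the twist and on $\mathcal{G}$, so the ``high level'' at which you would start the descent is itself $\mathcal{G}$-dependent, defeating the very uniformity you are after. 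These two issues mean the argument as written does not close; Keeler's route through $t$-regularity sidesteps both by never attempting a level-by-level Frobenius descent.
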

\begin{proof}
See \cite[Theorem 1.4]{Ke}.
\end{proof}

We may have another generalization of Fujita's vanishing theorem.
\begin{theorem}\label{Fujita2}
Assume that $X$ is smooth. Let $\mathcal{L}$ be an ample line bundle on $X$. Given any locally free sheaf $\mathcal{F}$ on $X$, there exists an integer $m(\mathcal{F}, \mathcal{L})$ such that
$$H^i(X, \mathcal{F}\otimes\mathcal{L}^m\otimes\mathcal{G})=0$$ for all $i>t$, $m\geq m(\mathcal{F}, \mathcal{L})$ and any coherent sheaf $\mathcal{G}$ on $X$ with $\phi_a(\mathcal{G})\leq t$.
\end{theorem}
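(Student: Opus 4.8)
The plan is to adapt Keeler's proof of Theorem \ref{Fujita1}, relocating the local-freeness hypothesis from the $\phi_a$-bounded factor onto the twisted factor $\mathcal{F}$ by exploiting the smoothness of $X$. First I would reduce to the case $\chr(k)=p>0$. In characteristic zero the hypothesis $\phi_a(\mathcal{G})\le t$ is, by definition, a statement about the closed fibres of a thickening $(\widetilde{X},\widetilde{\mathcal{F}},\widetilde{\mathcal{G}})\to\Spec A$ of $(X,\mathcal{F},\mathcal{G})$; using Lemma \ref{lemma3.1} I would pass to a common thickening carrying $\mathcal{F}$, $\mathcal{G}$, $\mathcal{L}$ and the relevant maps, so that the desired vanishing over $k$ follows from the same vanishing on a general closed fibre $X_q$ together with generic flatness and semicontinuity of cohomology over $\Spec A$. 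Thus it suffices to treat $p>0$, where by Lemma \ref{lemma3.3} the hypothesis $\phi_a(\mathcal{G})\le t$ is equivalent to: for every integer $a$ there is an $n_0$ with $H^i(X,\mathcal{L}^a\otimes(F_X^n)^*\mathcal{G})=0$ for all $i>t$ and $n>n_0$.

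The key new ingredient is a projection-formula identity. Since $X$ is smooth over the (perfect) field $k$, each iterated Frobenius $F_X^n$ is finite and flat, so $(F_X^n)_*$ is exact and sends locally free sheaves to locally free sheaves. Hence for any locally free $\mathcal{E}$ and any coherent $\mathcal{G}$ one has
\[
H^i\bigl(X,\mathcal{E}\otimes(F_X^n)^*\mathcal{G}\bigr)\cong H^i\bigl(X,(F_X^n)_*\mathcal{E}\otimes\mathcal{G}\bigr),
\]
with $(F_X^n)_*\mathcal{E}$ again locally free: cohomology is unchanged under the affine morphism $F_X^n$, and the projection formula $(F_X^n)_*(\mathcal{E}\otimes(F_X^n)^*\mathcal{G})\cong(F_X^n)_*\mathcal{E}\otimes\mathcal{G}$ holds with no local-freeness assumption on $\mathcal{G}$ precisely because $(F_X^n)_*\mathcal{E}$ is flat (so the derived tensor product is underived). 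This is exactly the point in Keeler's argument where the $\phi_a$-bounded factor had to be locally free; smoothness of $X$ moves that requirement onto $\mathcal{E}$, and hence onto $\mathcal{F}$, while allowing $\mathcal{G}$ to be an arbitrary coherent sheaf. Feeding the cohomological form of $\phi_a(\mathcal{G})\le t$ through the displayed isomorphism yields $H^i\bigl((F_X^n)_*\mathcal{L}^a\otimes\mathcal{G}\bigr)=0$ for $i>t$ and $n\gg0$, for each fixed $a$.

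With this in hand I would run Keeler's proof of Theorem \ref{Fujita1} with $\mathcal{F}$ in the role of the locally free factor and $\mathcal{G}$ in the role of the coherent one, invoking Fujita's vanishing theorem to produce the uniform threshold $m(\mathcal{F},\mathcal{L})$. I expect the \emph{transfer step} to be the main obstacle: one must express $\mathcal{F}\otimes\mathcal{L}^m$, for $m\ge m(\mathcal{F},\mathcal{L})$, through a finite complex of locally free sheaves of the shape $(F_X^n)_*\mathcal{L}^a$ whose syzygies stay locally free, so that tensoring with the non-flat $\mathcal{G}$ remains exact and the hypercohomology spectral sequence concentrates the vanishing in degrees $\le t$. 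The delicate part is guaranteeing that the resulting threshold $m(\mathcal{F},\mathcal{L})$ depends only on $\mathcal{F}$ and $\mathcal{L}$ and not on $\mathcal{G}$; this is where the full uniform strength of Theorem \ref{Fujita1} (equivalently, of Fujita's theorem) is indispensable, and carrying out this bookkeeping is the technical heart of the proof.
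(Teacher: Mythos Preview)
The paper's proof is far shorter than your outline: it simply cites Keeler \cite[Theorem~4.5]{Ke}, which already establishes the full statement over a perfect base field, and then passes to an arbitrary field by noting that both $\phi_a$ and the cohomology dimensions are invariant under field extension (Lemma~\ref{ext1}). Since characteristic-zero fields are perfect, no thickening or spreading-out is needed there; the only case requiring the extension step is an imperfect field in positive characteristic, and that is handled by base-changing to the algebraic closure.

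Your proposal, by contrast, amounts to re-deriving Keeler's Theorem~4.5 from his Theorem~1.4 (the paper's Theorem~\ref{Fujita1}). The mechanism you isolate---the projection formula $H^i(\mathcal{E}\otimes(F_X^n)^*\mathcal{G})\cong H^i((F_X^n)_*\mathcal{E}\otimes\mathcal{G})$ together with flatness of Frobenius on a regular scheme---is exactly the key device in Keeler's argument, so your diagnosis of \emph{why} smoothness allows one to shift local-freeness from $\mathcal{G}$ to $\mathcal{F}$ is correct. But the ``transfer step'' you flag as the main obstacle is precisely the substance of Keeler's proof, and you do not carry it out; so your proposal trades a one-line citation for an incomplete reconstruction of the cited result.

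One further caution about your characteristic-zero reduction: the integer $m(\mathcal{F},\mathcal{L})$ must be uniform in $\mathcal{G}$. If you spread out $\mathcal{G}$ along with $\mathcal{F}$ and $\mathcal{L}$, the closed point $q$ (and hence the fibre on which you invoke the positive-characteristic statement) may vary with $\mathcal{G}$, and it is not immediate that the resulting bounds $m(\mathcal{F}_q,\mathcal{L}_q)$ are uniformly controlled. The paper's route sidesteps this entirely, since Keeler's theorem already covers characteristic zero directly.
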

\begin{proof}
The vanishing has been proved in \cite[Theorem 4.5]{Ke} under the assumption that $k$ is perfect. Since the F-amplitude and the dimension of the cohomology are invariant under field extensions, the desired vanishing follows over any field.
\end{proof}

%Arapura also gave the basic vanishing theorem:
%\begin{theorem}
%Suppose that $p=0$ and $X$ is smooth. For a coherent sheaf $\mathcal{E}$ on $X$, we have $H^i(X, \omega_X\otimes\mathcal{E})=0$ for %$i>\phi_a(\mathcal{E})$.
%\end{theorem}
%\begin{proof}
%The conclusion is a special case of \cite[Corollary 8.5]{Ara1}.
%\end{proof}

\subsection{Partial Castelnuovo–Mumford regularity}\label{S3.2}
Castelnuovo–Mumford regularity gives a quantitative measure of the algebraic complexity of a coherent sheaf. We will recall the definition and basic properties (e. g. \cite[sec. 2]{Ke}).

\begin{definition}
Let $\mathcal{O}_X(1)$ be a very ample line bundle on $X$, and $t$ a nonnegative integer. We say a coherent sheaf $\mathcal{E}$ on $X$ is $(m, t)$-regular if $$H^i(X, \mathcal{E}(m-i))=0$$ for $i>t$. The $t$-regularity $\reg_t(\mathcal{E})$ of $\mathcal{E}$ is the minimum $m$ such that $\mathcal{E}$ is $(m, t)$-regular. If $\mathcal{E}$ is $(m, 0)$-regular, we simply say $\mathcal{E}$ is $m$-regular.
\end{definition}

The $0$-regularity is the usual Castelnuovo–Mumford regularity (see \cite[sec. 1.8]{Laz}). when $t>0$, we say $t$-regularity partial Castelnuovo–Mumford regularity. Here we use $t$-regularity instead of just $0$-regularity because it is necessary for the study of F-amplitude. This concept of $t$-regularity shares a basic property with $0$-regularity.

\begin{lemma}\label{pCM}
Let $\mathcal{E}$ be a coherent sheaf on $X$. If $\mathcal{E}$ is $(m, t)$-regular, then $\mathcal{E}$ is $(n, t)$-regular for all
$n\geq m$.
\end{lemma}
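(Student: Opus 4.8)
The plan is to reduce the statement to the single inductive step that $(m,t)$-regularity implies $(m+1,t)$-regularity; once this is in hand, iterating immediately yields $(n,t)$-regularity for every $n\geq m$. I would prove this step by induction on $d=\dim X$, following Mumford's classical hyperplane-section argument and simply carrying the threshold $t$ through the cohomological bookkeeping. Since the conditions defining $(m,t)$-regularity are vanishings of cohomology groups, whose dimensions are invariant under field extension by flat base change, I may first replace $k$ by an infinite extension without affecting either hypothesis or conclusion. This permits choosing a general member $H\in|\mathcal{O}_X(1)|$ whose defining section is a non-zero-divisor on $\mathcal{E}$, i.e. avoids the finitely many associated points of $\mathcal{E}$; this produces the short exact sequence $0\to\mathcal{E}(-1)\to\mathcal{E}\to\mathcal{E}|_H\to0$. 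The base case $d=0$ is trivial, as then $H^i$ vanishes for all $i\geq1$ and the regularity condition is vacuous.

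The heart of the argument uses two pieces of long exact cohomology sequences attached to this restriction sequence. Twisting by $\mathcal{O}_X(m-i)$ and taking cohomology places $H^i(H,(\mathcal{E}|_H)(m-i))$ between $H^i(X,\mathcal{E}(m-i))$ and $H^{i+1}(X,\mathcal{E}(m-i-1))$; for $i>t$ both flanking groups vanish by the $(m,t)$-regularity of $\mathcal{E}$ (the second because $m-i-1=m-(i+1)$ and $i+1>t$), so $\mathcal{E}|_H$ is $(m,t)$-regular on $H$. By the inductive hypothesis in dimension $d-1$, the restriction $\mathcal{E}|_H$ is then $(m+1,t)$-regular, giving $H^i(H,(\mathcal{E}|_H)(m+1-i))=0$ for $i>t$. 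Finally, twisting the restriction sequence by $\mathcal{O}_X(m+1-i)$ yields the exact three-term piece $H^i(X,\mathcal{E}(m-i))\to H^i(X,\mathcal{E}(m+1-i))\to H^i(H,(\mathcal{E}|_H)(m+1-i))$; for $i>t$ the left term vanishes by regularity and the right by the previous step, forcing $H^i(X,\mathcal{E}(m+1-i))=0$, which is precisely $(m+1,t)$-regularity of $\mathcal{E}$.

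The only genuinely delicate point is securing the restriction sequence, namely the existence of a hyperplane section that is a non-zero-divisor on $\mathcal{E}$. I expect this to be the main obstacle, though a mild one: it is handled by the preliminary base change to an infinite field, after which a Bertini-type genericity argument over the finite set of associated points of $\mathcal{E}$ suffices. Everything else is the standard regularity induction, resting on the elementary observation that if $i>t$ then also $i+1>t$, so all the cohomology groups that appear stay within the vanishing range governed by $(m,t)$-regularity.
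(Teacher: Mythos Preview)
Your proposal is correct and is precisely the classical Mumford hyperplane-section argument that the paper invokes: the paper's own proof merely states that the argument ``is essentially the same as that of $0$-regularity'' and cites \cite[Lemma 2.2]{Ke}, and what you have written is exactly that standard proof with the threshold $t$ carried along. Your handling of the only delicate point (passing to an infinite field to guarantee a hyperplane avoiding the associated points of $\mathcal{E}$) is the expected one.
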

\begin{proof}
The proof is essentially the same as that of $0$-regularity (cf. \cite[Lemma 2.2]{Ke}).
\end{proof}

The related notion of the level $\lambda(\mathcal{E})$ of a sheaf was introduced in \cite[sec. 1]{Ara2}. It measures the deviation from $0$-regular sheaves. The following lemma is helpful for us to compute the cohomology groups of a sheaf.

\begin{lemma}\label{lemma3.8}
Let $\mathcal{E}$ be a $m$-regular sheaf on $X$ with respect to a very ample line bundle $\mathcal{O}_X(1)$. Then for any $N\geq0$, there exist vector spaces $V_i$ and a resolution
$$V_N\otimes\mathcal{O}_X(-m-NR)\rightarrow\cdots \rightarrow V_1\otimes\mathcal{O}_X(-m-R)\rightarrow V_0\otimes\mathcal{O}_X(-m)\rightarrow\mathcal{E}\rightarrow0$$ where $R=\max\{1, \reg_0(\mathcal{O}_X)\}$.
\end{lemma}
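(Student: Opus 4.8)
The goal is to build, for an $m$-regular sheaf $\mathcal{E}$, a (possibly infinite, but truncatable at length $N$) resolution whose terms are sums of twists $\mathcal{O}_X(-m-jR)$ with the prescribed jumps by $R=\max\{1,\reg_0(\mathcal{O}_X)\}$. The plan is to produce the resolution one step at a time by a surjectivity argument and then to track the regularity of the kernels so that the same argument applies at the next stage.

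First I would establish the base step. Since $\mathcal{E}$ is $m$-regular, the standard Castelnuovo--Mumford machinery (Lemma \ref{pCM} with $t=0$) tells us that $\mathcal{E}(m)$ is $0$-regular, hence globally generated, so there is a surjection $V_0\otimes\mathcal{O}_X(-m)\twoheadrightarrow\mathcal{E}$ with $V_0=H^0(\mathcal{E}(m))$. Let $\mathcal{K}_1$ be the kernel, giving the short exact sequence $0\to\mathcal{K}_1\to V_0\otimes\mathcal{O}_X(-m)\to\mathcal{E}\to0$. The crucial claim is that $\mathcal{K}_1$ is $(m+R)$-regular. To see this, twist the sequence by $\mathcal{O}_X(m+R-i)$ and examine the long exact cohomology sequence: the $m$-regularity of $\mathcal{E}$ controls $H^i(\mathcal{E}(m+R-i))$, while $H^i\big(\mathcal{O}_X(-i+R)\big)=H^i\big(\mathcal{O}_X(R-i)\big)$ vanishes for $i>0$ precisely because $R\geq\reg_0(\mathcal{O}_X)$ (so $\mathcal{O}_X$ is $R$-regular) and $R\geq1$. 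Chasing these vanishings shows $H^i(\mathcal{K}_1(m+R-i))=0$ for $i>0$, i.e. $\mathcal{K}_1$ is $(m+R)$-regular.

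Then I would iterate. Replacing the role of $m$ by $m+R$ and $\mathcal{E}$ by $\mathcal{K}_1$, the same two observations give a surjection $V_1\otimes\mathcal{O}_X(-m-R)\twoheadrightarrow\mathcal{K}_1$ with $V_1=H^0(\mathcal{K}_1(m+R))$ and a new kernel $\mathcal{K}_2$ that is $(m+2R)$-regular. Inductively, at the $j$-th stage one has an $(m+jR)$-regular kernel $\mathcal{K}_j$, a surjection $V_j\otimes\mathcal{O}_X(-m-jR)\twoheadrightarrow\mathcal{K}_j$, and an $(m+(j+1)R)$-regular kernel $\mathcal{K}_{j+1}$. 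Splicing the surjections $V_j\otimes\mathcal{O}_X(-m-jR)\to\mathcal{K}_j\hookrightarrow V_{j-1}\otimes\mathcal{O}_X(-m-(j-1)R)$ yields the desired complex, and truncating after $N$ steps produces exactly the stated right-exact sequence $V_N\otimes\mathcal{O}_X(-m-NR)\to\cdots\to V_0\otimes\mathcal{O}_X(-m)\to\mathcal{E}\to0$ (exactness at each internal spot being automatic from the construction, since each map factors through the kernel of the next).

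The main obstacle, and the heart of the argument, is the regularity-propagation step: verifying that the kernel $\mathcal{K}_{j+1}$ gains exactly one shift of regularity, landing at $m+(j+1)R$ rather than something worse. This is where the choice $R=\max\{1,\reg_0(\mathcal{O}_X)\}$ is forced, and the inductive cohomology chase must be done carefully to ensure the index shifts align so that the free term $V_j\otimes\mathcal{O}_X(-m-jR)$ contributes no obstruction in degrees $i>0$. Everything else—global generation of $0$-regular sheaves, existence of the surjections, and the splicing—is routine once this regularity bookkeeping is in place.
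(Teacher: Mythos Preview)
Your approach is correct and is the standard one; the paper itself does not give a proof but simply refers to \cite[Corollary 3.2]{Ara1}, and your sketch is essentially how that result is established there.

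One point deserves to be made explicit, however: in the regularity-propagation step, the case $i=1$ does not follow from a pure vanishing chase. The long exact sequence reads
\[
V_0\otimes H^0(\mathcal{O}_X(R-1))\longrightarrow H^0(\mathcal{E}(m+R-1))\longrightarrow H^1(\mathcal{K}_1(m+R-1))\longrightarrow V_0\otimes H^1(\mathcal{O}_X(R-1))=0,
\]
so to kill $H^1(\mathcal{K}_1(m+R-1))$ you need the multiplication map $H^0(\mathcal{E}(m))\otimes H^0(\mathcal{O}_X(R-1))\to H^0(\mathcal{E}(m+R-1))$ to be surjective. This is Mumford's theorem on $0$-regular sheaves (indeed part of the ``standard machinery'' you invoke, but it should be named: the cohomology groups you list---$H^i(\mathcal{E}(m+R-i))$ rather than $H^{i-1}$---do not by themselves handle this case). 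For $i\geq 2$ the vanishing chase works exactly as you describe, using that $\mathcal{E}$ is $(m+R-1)$-regular and $\mathcal{O}_X$ is $R$-regular.
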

\begin{proof}
See \cite[Corollary 3.2]{Ara1}.
\end{proof}

\section{Frobenius semipositivity}\label{S4}
The concept of Frobenius semipositive sheaves (or simply F-semipositive sheaves) was also introduced in \cite{Ara1}, which is obtained by relaxing the condition for F-ampleness. We give a slight generalization of this concept in this section in order to adapt our situation. Let $k$ be a field with $\chr(k)=p$ and $X$ a projective scheme over $k$ in this section. We keep the same notations as that in Section \ref{S3}.

\subsection{Basic definitions and properties}

\begin{definition}
Let $\mathcal{O}_X(1)$ be a very ample line bundle on $X$ and $\mathcal{E}$ a coherent sheaf on $X$. If $p>0$, the F-semipositivity $\phi_{sp}(\mathcal{E})$ of $\mathcal{E}$ is the smallest nonnegative integer such that the $\phi_{sp}(\mathcal{E})$-regularities of $\mathcal{E}^{(p^n)}$ are bounded. If $p=0$, the F-semipositivity $\phi_{sp}(\mathcal{E})$ is defined to be the minimum of $\max_q\phi_{sp}(\mathcal{E}_q)$ over all thickenings. We say $\mathcal{E}$ is F-semipositive if $\phi_{sp}(\mathcal{E})=0$.
\end{definition}

\begin{remark}\label{rm4.2}
If $p>0$, it follows from Lemma \ref{pCM} that $\phi_{sp}(\mathcal{E})\leq N$ if and only if there exists an integer $m$ so that $H^{j}(\mathcal{E}^{(p^n)}(i))=0$ for any $n\geq0$, $i\geq m$ and $j>N$.
If $p=0$, by Lemma \ref{lemma3.1}, one sees that $\phi_{sp}(\mathcal{E})\leq N$ (resp. $\phi_{a}(\mathcal{E})\leq N$) if and only if $\phi_{sp}(\mathcal{E}_q)\leq N$ (resp. $\phi_{a}(\mathcal{E}_q)\leq N$) for almost all $q$ for a given thickening.
\end{remark}

From the definition of F-amplitude and F-semipositivity, one immediately obtains that:
\begin{lemma}\label{lemma3.10}
For any coherent sheaf $\mathcal{E}$ on $X$, we have $\phi_{sp}(\mathcal{E})\leq \phi_{a}(\mathcal{E})$. In particular, an F-ample sheaf is F-semipositive.
\end{lemma}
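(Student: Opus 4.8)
The plan is to prove the inequality first in characteristic $p>0$, where the definitions are cohomological, and then to descend to characteristic $0$ through an arithmetic thickening. Throughout I set $N:=\phi_a(\mathcal{E})$, so the goal is $\phi_{sp}(\mathcal{E})\leq N$. When $p>0$, Remark \ref{rm4.2} reduces this to producing a \emph{single} integer $m$, independent of $n$, such that $H^{j}(\mathcal{E}^{(p^n)}(i))=0$ for all $n\geq0$, $i\geq m$ and $j>N$; once such an $m$ is found, the $N$-regularities of the $\mathcal{E}^{(p^n)}$ are uniformly bounded and hence $\phi_{sp}(\mathcal{E})\leq N$.

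First I would record the auxiliary fact that Frobenius twisting cannot increase the F-amplitude, i.e. $\phi_a(\mathcal{E}^{(p^n)})\leq N$ for every $n\geq0$. This is immediate from the definition of F-amplitude together with the composition identity $(\mathcal{E}^{(p^n)})^{(p^m)}=\mathcal{E}^{(p^{n+m})}$, since the Frobenius pullbacks of $\mathcal{E}^{(p^n)}$ are exactly the tail $\{\mathcal{E}^{(p^{n+m})}\}_m$ of those of $\mathcal{E}$, on which the relevant higher cohomology already vanishes for large index. With this in hand I would invoke Keeler's Fujita-type vanishing: applying Theorem \ref{Fujita1} (or Theorem \ref{Fujita2} when $X$ is smooth) with $\mathcal{F}=\mathcal{O}_X$, $\mathcal{L}=\mathcal{O}_X(1)$ and $t=N$, every member of the family $\{\mathcal{E}^{(p^n)}\}_{n\geq0}$ satisfies the hypothesis $\phi_a(\mathcal{E}^{(p^n)})\leq N$ by the previous step. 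The theorem therefore yields a single bound $m_0=m(\mathcal{O}_X,\mathcal{L})$, depending only on $\mathcal{O}_X$ and $\mathcal{L}$ and in particular not on $n$, with $H^{i}(\mathcal{E}^{(p^n)}(m))=0$ for all $i>N$, all $m\geq m_0$ and all $n\geq0$. By Remark \ref{rm4.2} this is precisely the condition $\phi_{sp}(\mathcal{E})\leq N=\phi_a(\mathcal{E})$ in the case $p>0$.

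For $p=0$ I would pass to thickenings: by the positive-characteristic case applied fiberwise one has $\phi_{sp}(\mathcal{E}_q)\leq\phi_a(\mathcal{E}_q)$ on each fiber, so $\max_q\phi_{sp}(\mathcal{E}_q)\leq\max_q\phi_a(\mathcal{E}_q)$ for every thickening; taking the minimum over all thickenings (Lemma \ref{lemma3.1}) gives $\phi_{sp}(\mathcal{E})\leq\phi_a(\mathcal{E})$. The final assertion is then a one-line consequence: F-ampleness means $\phi_a(\mathcal{E})=0$, whence $\phi_{sp}(\mathcal{E})=0$, i.e. $\mathcal{E}$ is F-semipositive.

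The heart of the argument, and the step I expect to be the main obstacle, is the middle paragraph, where a transposition of quantifiers must be carried out. The hypothesis $\phi_a(\mathcal{E})=N$ only guarantees, for each \emph{fixed} twist, the vanishing of higher cohomology of $\mathcal{E}^{(p^n)}$ for $n$ large, whereas $\phi_{sp}(\mathcal{E})\leq N$ demands one \emph{uniform} twist that works simultaneously for \emph{all} $n\geq0$. The uniformity built into Keeler's theorem, namely that the threshold $m(\mathcal{O}_X,\mathcal{L})$ is independent of the auxiliary sheaf $\mathcal{G}$, is exactly the ingredient that closes this gap. A secondary technical point is selecting the correct version of the vanishing theorem: one uses Theorem \ref{Fujita1} when the $\mathcal{E}^{(p^n)}$ can be taken locally free, and otherwise appeals to Theorem \ref{Fujita2} in the smooth setting, where $\mathcal{G}$ is allowed to be an arbitrary coherent sheaf.
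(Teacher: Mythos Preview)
Your argument is correct in the cases it covers, but it takes a heavier and more circuitous route than the paper, which simply says the inequality is ``immediate from the definitions''. The direct argument the paper has in mind is this: in characteristic $p>0$, apply the defining property of $\phi_a(\mathcal{E})\leq N$ to the finitely many line bundles $\mathcal{O}_X(-i)$ for $N<i\leq\dim X$; this yields a single $n_0$ such that $\mathcal{E}^{(p^n)}$ is $(0,N)$-regular for every $n>n_0$. The remaining finitely many sheaves $\mathcal{E},\mathcal{E}^{(p)},\ldots,\mathcal{E}^{(p^{n_0})}$ each have finite $N$-regularity by Serre vanishing, and taking the maximum together with Lemma~\ref{pCM} gives a uniform bound on the $N$-regularities of all $\mathcal{E}^{(p^n)}$. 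The characteristic zero reduction is then the same as yours.

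Your route through Keeler's Fujita-type vanishing (Theorems~\ref{Fujita1} and~\ref{Fujita2}) does successfully handle the quantifier transposition you identify, but it introduces a small gap: the lemma is stated for an arbitrary coherent sheaf on a projective scheme $X$, while Theorem~\ref{Fujita1} requires $\mathcal{G}$ locally free and Theorem~\ref{Fujita2} requires $X$ smooth, so neither applies when $\mathcal{E}$ is not locally free and $X$ is singular. The elementary argument above avoids this and uses nothing beyond the definitions, Serre vanishing, and Lemma~\ref{pCM}.
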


The proposition below is a generalization of \cite[Corollary 3.9]{Ara1}.
\begin{proposition}
Let $\mathcal{E}$ be a coherent sheaf on $X$. Then
$\phi_{sp}(\mathcal{E})$ is independent of the choice of very ample line bundles.
\end{proposition}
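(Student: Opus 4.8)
The plan is to reduce everything to positive characteristic and there exploit the cohomological reformulation in Remark \ref{rm4.2}. When $p=0$, the value $\phi_{sp}(\mathcal{E})$ is defined as the minimum over thickenings of $\max_q\phi_{sp}(\mathcal{E}_q)$, where each fiber $X_q$ lies in characteristic $p_q>0$ and carries the restriction of whatever relatively very ample bundle we spread $\mathcal{O}_X(1)$ out to. Once the statement is known in positive characteristic, each $\phi_{sp}(\mathcal{E}_q)$ is independent of the very ample bundle chosen on $X_q$; and since by Lemma \ref{lemma3.1} any two thickenings (arising from two very ample bundles $\mathcal{A},\mathcal{B}$ on $X$) admit a common refinement over which both $\mathcal{A}$ and $\mathcal{B}$ spread out, the quantities $\max_q\phi_{sp}(\mathcal{E}_q)$ agree, and hence so do their minima. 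Thus I only need to treat $p>0$.

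Assume $p>0$ and let $\mathcal{A},\mathcal{B}$ be two very ample line bundles. By Remark \ref{rm4.2}, the inequality $\phi_{sp}(\mathcal{E})\le N$ computed with $\mathcal{A}$ means precisely that there is an integer $m_A$ with $H^j(\mathcal{E}^{(p^n)}\otimes\mathcal{A}^i)=0$ for all $n\ge0$, $i\ge m_A$ and $j>N$, and likewise for $\mathcal{B}$. Hence it suffices to show that this uniform vanishing for $\mathcal{A}$-twists forces the analogous one for $\mathcal{B}$-twists; the reverse implication is symmetric, and the two together give equality of the two candidate values of $\phi_{sp}(\mathcal{E})$.

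The engine is a comparison of $t$-regularities (with $t=N$) that is uniform in the sheaf. Writing $\mathcal{F}_n:=\mathcal{E}^{(p^n)}$, I would fix $s\gg0$ so that $\mathcal{A}^s\otimes\mathcal{B}^{-1}$ is globally generated, and use a section to obtain a short exact sequence $0\to\mathcal{B}\otimes\mathcal{A}^{-s}\to\mathcal{O}_X\to\mathcal{O}_D\to0$ for an effective divisor $D$. Tensoring by suitable twists of $\mathcal{F}_n$ and chasing the long exact cohomology sequence, while inducting on $\dim X$ via restriction to $D$, propagates the vanishing of $H^j(\mathcal{F}_n\otimes\mathcal{A}^i)$ in degrees $j>t$ to vanishing of $H^j(\mathcal{F}_n\otimes\mathcal{B}^{i'})$ in degrees $j>t$ for $i'$ large. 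The outcome I would aim for is a bound $\reg_t^{\mathcal{B}}(\mathcal{F}_n)\le c\,\reg_t^{\mathcal{A}}(\mathcal{F}_n)+c'$ with constants $c,c'$ depending only on $X,\mathcal{A},\mathcal{B},t$. Applying Lemma \ref{pCM} to pass to all higher twists, and feeding in the uniform bound $\reg_t^{\mathcal{A}}(\mathcal{F}_n)\le M$ coming from the hypothesis, then yields a uniform $m_B$ as required.

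The main obstacle is the uniformity of this comparison: the constants $c,c'$ must not depend on the sheaf (equivalently, on $n$), since otherwise boundedness of the $\mathcal{A}$-regularities across the Frobenius family $\{\mathcal{E}^{(p^n)}\}$ would fail to transfer to the $\mathcal{B}$-regularities. A further complication is that F-semipositivity only controls cohomology in degrees $>t$, so the convenient resolution of Lemma \ref{lemma3.8}, which requires $0$-regularity, is unavailable; the induction must therefore be run purely with the top-degree vanishing supplied by the exact sequence above. Keeping the constants sheaf-independent while having access only to high cohomology is where the real work lies.
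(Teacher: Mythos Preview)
Your reduction to positive characteristic and the symmetric strategy (transfer uniform $\mathcal{A}$-vanishing to uniform $\mathcal{B}$-vanishing) are exactly right and match the paper. What you have missed is the one trick that makes the argument short: Lemma~\ref{lemma3.8} is \emph{not} unavailable here. You are correct that it cannot be applied to $\mathcal{E}^{(p^n)}$, since we only control its $t$-regularity; but the paper applies it instead to the \emph{line bundle} $\mathcal{A}^{m}$, whose $0$-regularity with respect to $\mathcal{B}$ can be made as negative as one likes by Serre vanishing. Concretely, choose $m_B$ so that $H^j(\mathcal{E}^{(p^n)}\otimes\mathcal{B}^{\,i})=0$ for all $n\ge 0$, $i\ge m_B$, $j>t$, and then choose $m_A$ so large that $\reg_0^{\mathcal{B}}(\mathcal{A}^{m})\le -m_B-dR$ for every $m\ge m_A-d$. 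Lemma~\ref{lemma3.8} then resolves $\mathcal{A}^{m}$ by sums of $\mathcal{B}^{\,a_i}$ with $a_i\ge m_B$ for $0\le i\le d$; since $\mathcal{A}^{m}$ is locally free, all syzygies are locally free and the resolution stays exact after tensoring by $\mathcal{E}^{(p^n)}$. Chasing through gives $H^j(\mathcal{E}^{(p^n)}\otimes\mathcal{A}^{m})=0$ for $j>t$ and $m\ge m_A-d$, uniformly in $n$. Both obstacles you flagged---uniformity in $n$, and the lack of $0$-regularity for $\mathcal{E}^{(p^n)}$---simply disappear, because the resolution is being built for a fixed line bundle rather than for the moving family $\{\mathcal{E}^{(p^n)}\}$.

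Your alternative route via a section of $\mathcal{A}^s\otimes\mathcal{B}^{-1}$ and induction on $\dim X$ is not obviously wrong, but as written it has a real gap beyond the one you acknowledge: the short exact sequence $0\to\mathcal{B}\otimes\mathcal{A}^{-s}\to\mathcal{O}_X\to\mathcal{O}_D\to 0$ need not stay exact after tensoring with a general coherent $\mathcal{F}_n$ (the $\mathrm{Tor}_1(\mathcal{O}_D,\mathcal{F}_n)$ term can be nonzero), and the proposition places no smoothness or torsion-freeness hypotheses that would let you choose $D$ to avoid this. Even granting that, the induction hypothesis on $D$ requires you to know boundedness of the $\mathcal{A}|_D$-regularities of the restricted sheaves, which has to be extracted from the same sequence, so the bookkeeping is genuinely delicate. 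The paper's device of resolving the line bundle sidesteps all of this in one move.
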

\begin{proof}
Let $\mathcal{L}_1$ and $\mathcal{L}_2$ be two very ample line bundles on $X$. We write $\phi^{i}_{sp}(\mathcal{E})$ for the F-semipositivity of $\mathcal{E}$ with respect to $\mathcal{L}_i$, where $i=1$ or $2$. Using a thickening, we may assume $p>0$. Let $R_0$ be the $0$-regularity of  $\mathcal{O}_X$ with respect to $\mathcal{L}_2$ and $d=\dim X$.

By the definition of $\phi^{2}_{sp}(\mathcal{E})$, there exists an integer $m_2$ such that $$H^i(X, \mathcal{E}^{(p^n)}\otimes\mathcal{L}^{m-i}_2)=0$$ for any $n\geq0$, $m\geq m_2$ and $i>\phi^{2}_{sp}(\mathcal{E})$. By Serre's vanishing theorem,
we can take an integer $m_1$ such that the $0$-regularity of $\mathcal{L}^{m}_1$ with respect to $\mathcal{L}_2$ is less than $-m_2-dR$ for any $m\geq m_1-d$, where $R=\max\{1, R_0\}$. By Lemma \ref{lemma3.8}, for any $1\leq j\leq d$ we obtain an exact sequence $$0\rightarrow\mathcal{E}_{d+1}\rightarrow\mathcal{E}_{d}\rightarrow\cdots\rightarrow\mathcal{E}_{0}\rightarrow\mathcal{L}^{m_1-j}_1\rightarrow0,$$ where $\mathcal{E}_{i}=V_i\otimes\mathcal{L}^{m_2+dR-iR}_2$ for $i\leq d$. Tensoring this by $\mathcal{E}^{(p^n)}$ yields an exact complex
$$\cdots\rightarrow\mathcal{E}_{1}\otimes\mathcal{E}^{(p^n)}\rightarrow\mathcal{E}_{0}\otimes\mathcal{E}^{(p^n)}
\rightarrow\mathcal{L}_1^{m_1-j}\otimes\mathcal{E}^{(p^n)}\rightarrow0.$$
By chasing through this exact complex, one obtains $$H^j(X, \mathcal{E}^{(p^n)}\otimes\mathcal{L}^{m_1-j}_1)=0$$ for any $n\geq0$ and $j>\phi^{2}_{sp}(\mathcal{E})$. This implies that $\phi^{1}_{sp}(\mathcal{E})\leq\phi^{2}_{sp}(\mathcal{E})$. Similarly, one can show that $\phi^{2}_{sp}(\mathcal{E})\leq\phi^{1}_{sp}(\mathcal{E})$. Therefore we have $\phi^{2}_{sp}(\mathcal{E})=\phi^{1}_{sp}(\mathcal{E})$. This completes the proof.
\end{proof}

\begin{lemma}\label{ext2}
Let K be an extension of $k$ and $\mathcal{E}$ a coherent sheaf on $X$. Then we have $\phi_{sp}(\mathcal{E})=\phi_{sp}(\mathcal{E}\otimes_kK)$.
\end{lemma}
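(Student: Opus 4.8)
The plan is to reduce everything to a cohomological statement and to exploit the fact that the relevant cohomology is insensitive to flat base change, treating the positive and zero characteristic cases separately and reducing the latter to the former through thickenings. Write $\pi\colon X_K=X\times_k K\to X$ for the projection, so that $\mathcal{E}\otimes_k K=\pi^*\mathcal{E}$. Since $\pi$ is the base change of the field extension $\Spec K\to\Spec k$, it is faithfully flat, and $\pi^*\mathcal{O}_X(1)$ is a very ample line bundle on $X_K$; by the preceding proposition on independence of the very ample line bundle, I may compute $\phi_{sp}(\mathcal{E}\otimes_k K)$ with respect to this polarization.

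I would first settle the case $p>0$ using the cohomological criterion of Remark \ref{rm4.2}. The key point is that the absolute Frobenius is functorial, so $\pi\circ F_{X_K}=F_X\circ\pi$ and hence $(\mathcal{E}\otimes_k K)^{(p^n)}=\pi^*\mathcal{E}^{(p^n)}=\mathcal{E}^{(p^n)}\otimes_k K$ for every $n$. Combining this with the identity $(\mathcal{E}\otimes_k K)^{(p^n)}(i)=\pi^*(\mathcal{E}^{(p^n)}(i))$, which follows from $\mathcal{O}_{X_K}(i)=\pi^*\mathcal{O}_X(i)$, and with flat base change for cohomology, yields $H^j(X_K,(\mathcal{E}\otimes_k K)^{(p^n)}(i))\cong H^j(X,\mathcal{E}^{(p^n)}(i))\otimes_k K$, so these groups vanish simultaneously for all triples $(n,i,j)$. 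By Remark \ref{rm4.2}, the condition defining $\phi_{sp}\le N$ — the existence of an $m$ with the relevant $H^j$ vanishing for all $n\ge0$, $i\ge m$ and $j>N$ — therefore holds for $\mathcal{E}$ if and only if it holds for $\mathcal{E}\otimes_k K$, giving the equality in positive characteristic (in parallel with the F-amplitude case of Lemma \ref{ext1}).

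For $p=0$ I would argue through thickenings. The decisive observation is that a single thickening serves both sheaves: if $(\widetilde{X},\widetilde{\mathcal{E}})\to\Spec A$ is a thickening of $(X,\mathcal{E})$ with $A\subset k$, then $A\subset k\subset K$, and by transitivity of base change one has $\widetilde{X}\times_A K=(\widetilde{X}\times_A k)\times_k K=X_K$, so the same datum is a thickening of $(X_K,\mathcal{E}\otimes_k K)$. Its fibers $(X_q,\mathcal{E}_q)$ over the closed points $q\in\Spec A$ are literally the same for the two sheaves. By Remark \ref{rm4.2}, for a fixed thickening one has $\phi_{sp}(\mathcal{E})\le N$ (resp. $\phi_{sp}(\mathcal{E}\otimes_k K)\le N$) if and only if $\phi_{sp}(\mathcal{E}_q)\le N$ for almost all $q$; since the fibers coincide, the two F-semipositivities must agree. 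Lemma \ref{lemma3.1} guarantees that such a thickening exists and that the value so computed is independent of the choice, closing the argument.

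The steps are largely bookkeeping, so I expect no serious obstacle; the one point requiring care is the commutation of the Frobenius pullback with base change, that is the identity $(\mathcal{E}\otimes_k K)^{(p^n)}=\mathcal{E}^{(p^n)}\otimes_k K$. This rests on the absolute (rather than relative) Frobenius being a natural endomorphism of the identity functor on $\mathbb{F}_p$-schemes, and it is exactly what legitimizes the flat base change computation above; everything else follows formally from Remark \ref{rm4.2} and the independence of the polarization.
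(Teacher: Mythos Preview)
Your proposal is correct and follows precisely the approach the paper has in mind: the paper's proof simply says ``similar to that of Lemma~\ref{ext1}'', and what you have written is exactly the standard reduction via flat base change of cohomology and functoriality of the absolute Frobenius in positive characteristic, together with the observation that a thickening over $A\subset k$ is simultaneously a thickening over $A\subset K$ in characteristic zero. There is nothing to add.
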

\begin{proof}
The proof is similar to that of Lemma \ref{ext1}.
\end{proof}

\begin{proposition}[F-semipositivity of restrictions]\label{restriction}
Let $\mathcal{O}_X(1)$ be a very ample line bundle and $\mathcal{E}$ a coherent sheaf on $X$. Assume either that $\mathcal{E}$ is locally free or that $X$ is smooth and $k$ is infinite. Then for a general divisor $H$ in the linear system $|\mathcal{O}_X(1)|$ we have $$\mathcal{\phi}_{sp}(\mathcal{E}|_H)\leq\mathcal{\phi}_{sp}(\mathcal{E})\leq\mathcal{\phi}_{sp}(\mathcal{E}|_H)+1.$$
\end{proposition}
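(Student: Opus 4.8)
The plan is to reduce everything to the case $p>0$ by spreading out, and then to run a Frobenius-twisted version of the standard restriction argument for Castelnuovo--Mumford regularity, using the hyperplane-section sequence together with the cohomological characterization of $\phi_{sp}$ in Remark \ref{rm4.2}.

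First I would treat the case $p>0$, which is the heart of the matter. The key point is the compatibility of the absolute Frobenius with the closed immersion $\iota\colon H\hookrightarrow X$: since $F_X$ is the identity on the underlying space and the $p$-th power on functions, $\iota$ is natural, i.e. $\iota\circ F_H=F_X\circ\iota$, whence $(\mathcal{E}|_H)^{(p^n)}=(\mathcal{E}^{(p^n)})|_H$ for all $n$. Next I would check that a single general $H$ makes the restriction sequence $0\to\mathcal{E}^{(p^n)}(i-1)\to\mathcal{E}^{(p^n)}(i)\to(\mathcal{E}|_H)^{(p^n)}(i)\to 0$ exact simultaneously for all $n$ and $i$. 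This is where the two hypotheses enter: if $\mathcal{E}$ is locally free, then so is every $\mathcal{E}^{(p^n)}$, and its associated points coincide with those of $\mathcal{O}_X$; if $X$ is smooth (hence regular), then $F_X$ is flat by Kunz's theorem and, being a homeomorphism, preserves associated points, so $\mathrm{Ass}(\mathcal{E}^{(p^n)})=\mathrm{Ass}(\mathcal{E})$ for every $n$. In either case there are only finitely many such points, and a general $H$ (smooth as well, by Bertini, when $X$ is smooth and $k$ infinite) avoids all of them.

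Granting this, both inequalities follow by chasing the long exact cohomology sequences and invoking Remark \ref{rm4.2}. Write $N=\phi_{sp}(\mathcal{E})$, so there is an $m$ with $H^j(\mathcal{E}^{(p^n)}(i))=0$ for all $n\ge 0$, $i\ge m$, $j>N$. For $i\ge m+1$ and $j>N$ the two flanking terms $H^j(\mathcal{E}^{(p^n)}(i))$ and $H^{j+1}(\mathcal{E}^{(p^n)}(i-1))$ vanish, so $H^j((\mathcal{E}|_H)^{(p^n)}(i))=0$; by Remark \ref{rm4.2} this gives $\phi_{sp}(\mathcal{E}|_H)\le N$. Conversely, write $N'=\phi_{sp}(\mathcal{E}|_H)$, with $H^j((\mathcal{E}|_H)^{(p^n)}(i))=0$ for $n\ge0$, $i\ge m'$, $j>N'$. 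For $j>N'+1$ and $i\ge m'$ the terms $H^{j-1}((\mathcal{E}|_H)^{(p^n)}(i))$ and $H^j((\mathcal{E}|_H)^{(p^n)}(i))$ both vanish, so the natural maps $H^j(\mathcal{E}^{(p^n)}(i-1))\to H^j(\mathcal{E}^{(p^n)}(i))$ are isomorphisms; since Serre vanishing kills these groups for $i\gg 0$ (with $n$ fixed), they must already vanish for all $i\ge m'-1$. As $m'$ is independent of $n$, Remark \ref{rm4.2} yields $\phi_{sp}(\mathcal{E})\le N'+1$.

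Finally I would descend to $p=0$. Choosing a general (hence, in characteristic $0$, smooth) $H$ and spreading out the whole diagram $(X,\mathcal{E},\mathcal{O}_X(1),H,\iota)$ over some $\Spec A$ by Lemma \ref{lemma3.1}, for almost all closed points $q$ the fiber $(X_q,\mathcal{E}_q,H_q)$ inherits the relevant hypotheses (smoothness or local freeness, and $H_q$ avoiding $\mathrm{Ass}(\mathcal{E}_q)$), and restriction commutes with passage to fibers, $\mathcal{E}_q|_{H_q}=(\mathcal{E}|_H)_q$. The positive-characteristic case then gives $\phi_{sp}((\mathcal{E}|_H)_q)\le\phi_{sp}(\mathcal{E}_q)\le\phi_{sp}((\mathcal{E}|_H)_q)+1$ for almost all $q$, and the two desired inequalities drop out of the ``for almost all $q$'' characterization of $\phi_{sp}$ in Remark \ref{rm4.2}. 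The main obstacle, and the only genuinely delicate point, is the uniform choice of $H$: one needs a single hyperplane section transverse to $\mathcal{E}^{(p^n)}$ for every $n$ at once, which is exactly why the invariance $\mathrm{Ass}(\mathcal{E}^{(p^n)})=\mathrm{Ass}(\mathcal{E})$ (via local freeness or Kunz flatness) is essential.
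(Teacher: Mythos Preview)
Your proposal is correct and follows essentially the same route as the paper: reduce to $p>0$, establish the restriction exact sequence $0\to\mathcal{E}^{(p^n)}(m-1)\to\mathcal{E}^{(p^n)}(m)\to\mathcal{E}^{(p^n)}(m)|_H\to0$ uniformly in $n$, and run the long exact cohomology sequence together with Serre vanishing. The only cosmetic difference is how the uniform exactness is justified in the smooth case: the paper pulls back the single injection $\mathcal{E}(-1)\xrightarrow{\cdot s}\mathcal{E}$ by the flat morphism $F_X^n$ to obtain an injection $\mathcal{E}^{(p^n)}(-p^n)\to\mathcal{E}^{(p^n)}$ and then factors it through the maps $\tau_i$, whereas you argue directly that $\mathrm{Ass}(\mathcal{E}^{(p^n)})=\mathrm{Ass}(\mathcal{E})$ via Kunz flatness; these are equivalent, and your explicit mention of the compatibility $(\mathcal{E}|_H)^{(p^n)}\cong(\mathcal{E}^{(p^n)})|_H$ is a point the paper leaves implicit.
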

\begin{proof}
Using a thickening, we may assume that $p>0$. In the case that $X$ is smooth and $k$ is infinite, from \cite[Lemma 1.1.12]{HL}, one sees that for a general section $s\in H^0(X, \mathcal{O}_X(1))$ the natural morphism
$$\theta: \mathcal{E}(-1)\xrightarrow{\cdot s} \mathcal{E}$$ is injective. The smoothness of $X$ implies that $F_X$ is flat. Thus the morphism $$(F^n_X)^*\theta: \mathcal{E}^{(p^n)}(-p^n)\xrightarrow{\cdot s^{p^n}}\mathcal{E}^{(p^n)}$$ is still injective. Consider the morphism $$\tau_i: \mathcal{E}^{(p^n)}(-i)\xrightarrow{\cdot s} \mathcal{E}^{p^n}(-i+1).$$ It turns out that $(F^n_X)^*\theta$ is the composition of the $\tau_i$'s, here $1\leq i\leq p^n$. This implies that $\tau_i$ is injective for any $i$. In the case that $\mathcal{E}$ is locally free, the injectivity of $\tau_i$ is obvious. Therefore, in any case one obtains an exact sequence

\begin{equation}\label{4.1}
0\rightarrow\mathcal{E}^{(p^n)}(m-1)\rightarrow\mathcal{E}^{(p^n)}(m)\rightarrow\mathcal{E}^{(p^n)}(m)|_H\rightarrow0
\end{equation}
for any integer $m$.

By the definition of $\phi_{sp}(\mathcal{E})$, one sees that there exists an integer $m_0$ such that $$H^i(X, \mathcal{E}^{(p^n)}(m-i))=0$$
for all $i>\phi_{sp}(\mathcal{E})$, $m\geq m_0$ and $n\geq0$. On the other hand, from the short exact sequence (\ref{4.1}), we obtain the induced long exact sequence
$$H^i(X, \mathcal{E}^{(p^n)}(m-i))\rightarrow H^i(H, \mathcal{E}^{(p^n)}(m-i)|_H)\rightarrow H^{i+1}(X, \mathcal{E}^{(p^n)}(m-i-1)).$$
It follows that $H^i(H, \mathcal{E}^{(p^n)}(m-i)|_H)=0$ for $i>\phi_{sp}(\mathcal{E})$, $m\geq m_0$ and $n\geq0$. Thus we infer that $\mathcal{\phi}_{sp}(\mathcal{E}|_H)\leq\mathcal{\phi}_{sp}(\mathcal{E})$.

Similarly, from the definition of $\phi_{sp}(\mathcal{E}|_H)$, we deduce that there exists an integer $m_1$ such that $$H^i(H, \mathcal{E}^{(p^n)}(m-i)|_H)=0$$
for all $i>\phi_{sp}(\mathcal{E}|_H)$, $m\geq m_1$ and $n\geq0$. The short exact sequence (\ref{4.1}) gives the long exact sequence
\begin{eqnarray*}
% \nonumber % Remove numbering (before each equation)
&&H^i(H, \mathcal{E}^{(p^n)}(m-i)|_H)\rightarrow H^{i+1}(X, \mathcal{E}^{(p^n)}(m-i-1))\\
&&\rightarrow H^{i+1}(X, \mathcal{E}^{(p^n)}(m-i))\rightarrow H^{i+1}(H, \mathcal{E}^{(p^n)}(m-i)|_H).
\end{eqnarray*}
This implies that $$H^{i+1}(X, \mathcal{E}^{(p^n)}(m-i-1))\cong H^{i+1}(X, \mathcal{E}^{(p^n)}(m-i))$$ for any $i>\phi_{sp}(\mathcal{E}|_H)$, $m\geq m_1$ and $n\geq0$. Hence one sees that $$H^{i+1}(X, \mathcal{E}^{(p^n)}(m-i-1))\cong H^{i+1}(X, \mathcal{E}^{(p^n)}(m-i+b))$$ for any $i>\phi_{sp}(\mathcal{E}|_H)$, $m\geq m_1$, $n\geq0$ and $b\geq0$. By Serre's vanishing, one has $H^{i+1}(X, \mathcal{E}^{(p^n)}(m-i+b))=0$ for $b\gg0$. Therefore, we infer that $$H^{i+1}(X, \mathcal{E}^{(p^n)}(m-i-1))=0$$ for any $i>\phi_{sp}(\mathcal{E}|_H)$, $m\geq m_1$ and $n\geq0$.
It follows that
$\mathcal{\phi}_{sp}(\mathcal{E})\leq\mathcal{\phi}_{sp}(\mathcal{E}|_H)+1$.
\end{proof}

The F-semipositivity can be governed by Castelnuovo–Mumford regularity:
\begin{proposition}\label{est-CM}
Let $\mathcal{O}_X(1)$ be a very ample line bundle and $\mathcal{E}$ an $m$-regular sheaf on $X$. Assume either that $X$ is smooth with $\dim X=d$ or that $\mathcal{E}$ is locally free. Then we have
$$\phi_{sp}(\mathcal{E}(m+tR))\leq\max\{d-t-1, 0\},$$ where $R=\max\{1, \reg_0(\mathcal{O}_X)\}$.
\end{proposition}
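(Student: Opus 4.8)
The plan is to reduce to the case $p>0$ and then run a cohomology chase on the linear resolution provided by Lemma \ref{lemma3.8}. I would first dispose of the characteristic zero case by spreading out. Choosing an arithmetic thickening of the diagram $(X,\mathcal{O}_X(1),\mathcal{E})$ as in Lemma \ref{lemma3.1}, I would note that smoothness of $X$, local freeness of $\mathcal{E}$, the equality $\dim X_q=d$, very ampleness of $\mathcal{O}_{X_q}(1)$, the $m$-regularity of $\mathcal{E}_q$ and the $R$-regularity of $\mathcal{O}_{X_q}$ all hold for almost all closed fibers $q$, since the relevant cohomology groups are of generically constant (here vanishing) dimension. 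As $(\mathcal{E}(m+tR))_q=\mathcal{E}_q(m+tR)$, the positive characteristic case applied fiberwise, together with the ``almost all $q$'' criterion of Remark \ref{rm4.2}, would give the desired bound. So I may assume $p>0$.

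Now set $\mathcal{F}=\mathcal{E}(m+tR)$. By Lemma \ref{lemma3.8} applied with $N=d$, the sheaf $\mathcal{E}$ admits a resolution by the line bundle sums $V_l\otimes\mathcal{O}_X(-m-lR)$; twisting by $\mathcal{O}_X(m+tR)$ gives a resolution of $\mathcal{F}$ whose $l$-th term is $V_l\otimes\mathcal{O}_X((t-l)R)$. I would then apply $(F_X^n)^*$ and twist by $\mathcal{O}_X(i)$, using $(F_X^n)^*\mathcal{O}_X(a)=\mathcal{O}_X(p^na)$, to obtain a complex with terms $C_l=V_l\otimes\mathcal{O}_X(p^n(t-l)R+i)$ augmented over $\mathcal{F}^{(p^n)}(i)$. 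The point requiring justification is that this pulled-back complex is still exact: when $X$ is smooth this is immediate since $F_X$ is flat, and when $\mathcal{E}$ is locally free I would observe that every syzygy of the resolution is then locally free (each syzygy sits in a short exact sequence with locally free quotient, hence is locally a direct summand), so the defining short exact sequences are locally split and survive any pullback.

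By Remark \ref{rm4.2} it then suffices to produce an integer $i_0$, independent of $n$, with $H^j(\mathcal{F}^{(p^n)}(i))=0$ for all $n\geq0$, $i\geq i_0$ and $j>\max\{d-t-1,0\}$. Breaking the complex into the short exact sequences $0\to Z_{l+1}\to C_l\to Z_l\to0$ of its syzygies, with $Z_0=\mathcal{F}^{(p^n)}(i)$, and iterating the induced long exact sequences, $H^j(\mathcal{F}^{(p^n)}(i))$ injects into $H^{j+l}(Z_l)$ as long as $H^{j+l'}(C_{l'})=0$ for $l'<l$; pushing $l$ to $d-j+1$, where $H^{j+l}(Z_l)$ lives in degree exceeding $d$ and hence vanishes, reduces everything to the vanishing of $H^{j+l}(C_l)$ for $0\leq l\leq d-j$. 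In this range one checks $l\leq d-j\leq t$, so $p^n(t-l)R\geq0$ and the twisting degree is at least $i$. Since $\mathcal{O}_X$ is $R$-regular (Lemma \ref{pCM}), $H^q(\mathcal{O}_X(b))=0$ whenever $q\geq1$ and $b+q\geq R$, which holds here as soon as $i\geq R-1$. Taking $i_0=R-1$ finishes the argument.

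The step I expect to be the main obstacle is the exactness of the Frobenius-pulled-back resolution in the locally free case, where $X$ need not be smooth and $F_X$ need not be flat; the saving observation is precisely that the syzygies of a locally free sheaf are locally free, so the relevant sequences are locally split and preserved by $(F_X^n)^*$. A secondary delicate point is keeping the cohomological bound $i_0$ uniform in the Frobenius power $n$, which is exactly what the nonnegativity of the twists $p^n(t-l)R$ for $l\leq t$ ensures.
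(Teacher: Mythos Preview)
Your proposal is correct and follows essentially the same route as the paper: apply Lemma~\ref{lemma3.8}, twist, pull back by Frobenius, and chase cohomology using the $R$-regularity of $\mathcal{O}_X$. You are simply more explicit than the paper about the spreading-out reduction to $p>0$, about why $(F_X^n)^*$ preserves exactness (flatness of $F_X$ when $X$ is smooth, local splitness of the syzygy sequences when $\mathcal{E}$ is locally free), and about the uniform choice of twist $i_0$; the paper leaves these implicit and only exhibits the single twist $R$.
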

\begin{proof}
By Lemma \ref{lemma3.8}, we obtain an exact sequence $$\cdots\rightarrow\mathcal{E}_{d}\rightarrow\cdots\rightarrow\mathcal{E}_{0}\rightarrow\mathcal{E}(m+tR)\rightarrow0,$$
where $\mathcal{E}_{i}=V_i\otimes\mathcal{O}_X((t-i)R)$. Pulling back it via $(F_X^n)^*$, one gets the exact sequence
$$\cdots\rightarrow V_t\otimes\mathcal{O}_X\rightarrow\cdots\rightarrow V_0\otimes\mathcal{O}_X(p^ntR)\rightarrow(\mathcal{E}(m+tR))^{(p^n)}\rightarrow0.$$
When $i\geq d-t$, one deduced that
$$h^i((\mathcal{E}(m+tR))^{(p^n)}(R))\leq\sum_{j=0}^{t}h^{i+j}(V_j\otimes\mathcal{O}_X(p^n(t-j)R+R))=0.$$
This completes the proof.
\end{proof}

The following theorem is a variant of \cite[Theorem 3]{Ara1}.

\begin{theorem}\label{tensor}
Let $\mathcal{O}_X(1)$ be a very ample line bundle on $X$. Let $\mathcal{E}$ and $\mathcal{F}$ be two coherent sheaves on $X$ such that one of them is locally free and $\mathcal{E}$ is F-ample, then
$$\phi_a(\mathcal{E}\otimes\mathcal{F})\leq\phi_{sp}(\mathcal{F}).$$
\end{theorem}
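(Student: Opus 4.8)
The plan is to reduce, by an arithmetic thickening, to the case $p>0$, and then to deduce the required cohomological vanishing by combining a resolution of the locally free factor with the partial‑regularity information carried by $\phi_{sp}(\mathcal{F})$. Set $N=\phi_{sp}(\mathcal{F})$. Using Lemma \ref{lemma3.1} together with Lemma \ref{ext1} and Lemma \ref{ext2} one passes to a common thickening and replaces $(X,\mathcal{E},\mathcal{F})$ by a general closed fiber, so that we may assume $p>0$; the three hypotheses — $\mathcal{E}$ is $F$-ample, $\phi_{sp}(\mathcal{F})\le N$, and one of $\mathcal{E},\mathcal{F}$ is locally free — all persist on a general fiber (local freeness being an open condition). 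Since the absolute Frobenius commutes with tensor products, $(\mathcal{E}\otimes\mathcal{F})^{(p^{n})}=\mathcal{E}^{(p^{n})}\otimes\mathcal{F}^{(p^{n})}$, and by Lemma \ref{lemma3.3} it suffices to show, for a fixed ample $\mathcal{O}_X(1)$ and every integer $m$, that
$$H^{i}\!\big(X,\ \mathcal{O}_X(m)\otimes\mathcal{E}^{(p^{n})}\otimes\mathcal{F}^{(p^{n})}\big)=0\qquad(i>N,\ n\gg0).$$
By Remark \ref{rm4.2} I fix $m_{1}$ with $H^{j}(\mathcal{F}^{(p^{n})}(i))=0$ for all $j>N$, $i\ge m_{1}$ and $n\ge0$.

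The heart of the argument is to exploit the $F$-ampleness of $\mathcal{E}$ to present $\mathcal{F}^{(p^{n})}$ only with large twists. Since $\phi_a(\mathcal{E}^{(p^{n})})\le\phi_a(\mathcal{E})=0$, for any fixed locally free $\mathcal{G}$ the sheaf $\mathcal{G}\otimes\mathcal{E}^{(p^{n})}$ is $0$-regular once $n\gg0$; equivalently, the Frobenius twists of the $F$-ample $\mathcal{E}$ supply positivity that grows with $n$. Concretely I would resolve (by Lemma \ref{lemma3.8}), or dually embed, the locally free factor into sums of line bundles and pull the resulting complex back by $(F_X^{n})^{*}$ — using flatness of Frobenius on the smooth locus and the local freeness hypothesis to preserve exactness after tensoring with the other factor — so that the line‑bundle terms acquire twists of the form $m+p^{n}b$ with $b>0$. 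For $n\gg0$ these exceed $m_{1}$, so by the choice of $m_{1}$ the corresponding terms $\bigoplus\mathcal{F}^{(p^{n})}(m+p^{n}b)$ have vanishing cohomology in all degrees $>N$. The aim is then to run the hypercohomology spectral sequence of this complex: the entries in cohomological degrees $>N$ are annihilated by the semipositivity of $\mathcal{F}$, while the leftover entries must be controlled by the full strength of the $F$-ampleness of $\mathcal{E}$ through the Fujita‑type vanishing of Theorems \ref{Fujita1} and \ref{Fujita2}, which are uniform in the $F$-ample sheaf $\mathcal{E}^{(p^{n})}$ and hence in $n$ (Theorem \ref{Fujita1} when the tensoring sheaf is locally free, Theorem \ref{Fujita2} when it is merely coherent but $X$ is smooth). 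This is the refinement of \cite[Theorem 3]{Ara1} in which $\phi_a(\mathcal{F})$ is replaced by the weaker $\phi_{sp}(\mathcal{F})$.

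The step I expect to be the main obstacle is precisely the coordination of the two Frobenius pullbacks. The positivity encoded in $\phi_{sp}(\mathcal{F})\le N$ is a statement only about large twists $\mathcal{F}^{(p^{n})}(i)$ with $i\ge m_{1}$ and only in cohomological degrees $>N$, whereas any resolution of the $F$-ample factor reintroduces fixed (indeed negative) twists and low‑degree cohomology to which the semipositivity of $\mathcal{F}$ does not apply. Making the bookkeeping work therefore requires that the growing positivity of $\mathcal{E}^{(p^{n})}$ be transferred onto $\mathcal{F}^{(p^{n})}$ as an honest positive twist — this is what passing through positive line bundles achieves — while the remaining low‑degree contributions on each antidiagonal $i>N$ are absorbed by the uniform Fujita vanishing for $\mathcal{E}$. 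Verifying that all relevant spectral‑sequence entries are simultaneously killed, for a single $n$ depending only on $m$, is the delicate point, and is exactly where the interplay between $F$-ampleness and $F$-semipositivity is essential.
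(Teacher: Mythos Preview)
There is a genuine gap in your plan, and it lies exactly where you anticipated: the ``coordination of the two Frobenius pullbacks''. You correctly observe that F-ampleness of $\mathcal{E}$ forces $\mathcal{G}\otimes\mathcal{E}^{(p^n)}$ to be $0$-regular for $n\gg0$ --- in particular $\reg_0(\mathcal{E}^{(p^n)})\to-\infty$ --- but you then fail to exploit this. Instead you propose to resolve the locally free factor once and for all \emph{before} taking Frobenius and then pull the resolution back. That produces line-bundle terms $\mathcal{O}_X(p^na_j)$ for \emph{fixed} constants $a_j$, and since Lemma~\ref{lemma3.8} gives $a_j=-\reg_0-jR$, these $a_j$ are eventually negative; hence $p^na_j\to-\infty$ and the semipositivity bound on $\mathcal{F}^{(p^n)}$ says nothing about the corresponding terms. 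Your fallback of invoking ``uniform Fujita vanishing for $\mathcal{E}$'' cannot rescue this: once $\mathcal{E}$ has been resolved into line bundles it no longer appears, so Theorems~\ref{Fujita1} and~\ref{Fujita2} have no F-ample input to act on; and if you resolve $\mathcal{F}$ instead, the leftover terms are of the form $(\mathcal{E}(a_j))^{(p^n)}(m)$ with $a_j\ll0$, and there is no reason $\mathcal{E}(a_j)$ should be F-ample.

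The paper avoids the problem entirely by resolving $\mathcal{E}^{(p^n)}$ \emph{itself} rather than $\mathcal{E}$. Given the target twist $b$, F-ampleness yields $n_0$ with $\reg_0(\mathcal{E}^{(p^n)})\le -m_0-dR+b$ for all $n\ge n_0$; then Lemma~\ref{lemma3.8} furnishes a resolution of $\mathcal{E}^{(p^n)}$ whose first $d+1$ terms are sums of $\mathcal{O}_X(c_j)$ with $c_j=m_0+(d-j)R-b\ge m_0-b$ for $0\le j\le d$. After tensoring with $\mathcal{F}^{(p^n)}(b)$, every relevant term is $\mathcal{F}^{(p^n)}(c_j+b)$ with $c_j+b\ge m_0$, so the partial-regularity hypothesis on $\mathcal{F}$ alone kills all cohomology in degrees $>N$, and a single diagram chase finishes. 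There are no leftover entries and no appeal to Fujita-type vanishing is required; the ``growing positivity'' of $\mathcal{E}^{(p^n)}$ is transferred to $\mathcal{F}^{(p^n)}$ not through the individual twists $p^na_j$ but through the regularity of $\mathcal{E}^{(p^n)}$ as a whole.
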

\begin{proof}
Assume that $p>0$. By the definition of $\phi_{sp}(\mathcal{F})$, one sees that there is an integer $m_0$ such that
$$H^i(X, \mathcal{F}^{(p^n)}(m-i))=0$$ for all $i>\phi_{sp}(\mathcal{F})$, $m\geq m_0$ and $n\geq0$. Let $d=\dim X$ and $R=\max\{1, \reg_0(\mathcal{O}_X)\}$.
Since $\mathcal{E}$ is F-ample, one sees that for any integer $b$ there is an $n_0$ such that $$\reg_0(\mathcal{E}^{(p^n)})\leq -m_0-dR+b$$ for all $n\geq n_0$. From Lemma \ref{lemma3.8}, we obtain an exact sequence $$\cdots\rightarrow\mathcal{E}_{d}\rightarrow\cdots\rightarrow\mathcal{E}_{0}\rightarrow\mathcal{E}^{(p^n)}\rightarrow0,$$
where $\mathcal{E}_{i}=V_i\otimes\mathcal{O}_X(m_0+dR-iR-b)$. Twisting through by $\mathcal{F}^{(p^n)}(b)$ yields an exact complex
$$\cdots\rightarrow\mathcal{E}_{1}\otimes\mathcal{F}^{(p^n)}(b)\rightarrow\mathcal{E}_{0}\otimes\mathcal{F}^{(p^n)}(b)
\rightarrow\mathcal{E}^{(p^n)}\otimes\mathcal{F}^{(p^n)}(b)\rightarrow0.$$
By chasing through this exact complex, it follows that
$$H^i(X, (\mathcal{E}\otimes\mathcal{F})^{(p^n)}\otimes\mathcal{O}_X(b))=0$$ for $n\geq n_0$ and $i>\phi_{sp}(\mathcal{F})$. Hence the conclusion follows from Lemma \ref{lemma3.3}.

In the case of $p=0$, we can carry out the above argument on the fiber of some thickening.
\end{proof}

The next result shows that the F-semipositivity of a sheaf can be considered as a limit of the F-amplitude of some sheaves.

\begin{theorem}\label{sp-a}
Suppose that $p>0$. Let $\mathcal{E}$ be a coherent sheaf on $X$, $\mathcal{L}$ an ample line bundle on $X$ and $t$ a nonnegative integer. Assume either that $X$ is smooth or that $\mathcal{E}$ is locally free. Then $\phi_{sp}(\mathcal{E})\leq t$ if and only if $\phi_{a}(\mathcal{E}^{(p^n)}\otimes \mathcal{L})\leq t$ for any $n\geq0$.
\end{theorem}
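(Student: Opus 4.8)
The plan is to establish the two implications separately, using the tensor vanishing Theorem~\ref{tensor} for the ``only if'' direction and the Fujita type vanishing Theorems~\ref{Fujita1} and~\ref{Fujita2} for the ``if'' direction. For the forward implication, suppose $\phi_{sp}(\mathcal{E})\leq t$. Since $\mathcal{L}$ is ample it is F-ample by Lemma~\ref{lemma3.4}, and as a line bundle it is locally free, so Theorem~\ref{tensor} applied with the F-ample sheaf $\mathcal{L}$ and the coherent sheaf $\mathcal{E}^{(p^n)}$ gives $\phi_a(\mathcal{E}^{(p^n)}\otimes\mathcal{L})\leq\phi_{sp}(\mathcal{E}^{(p^n)})$. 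It then suffices to note that $\phi_{sp}$ is monotone under Frobenius pull back, $\phi_{sp}(\mathcal{E}^{(p^n)})\leq\phi_{sp}(\mathcal{E})$: this is immediate from Remark~\ref{rm4.2}, since $(\mathcal{E}^{(p^n)})^{(p^s)}=\mathcal{E}^{(p^{n+s})}$ and the exponents $n+s$ run through a subset of $\{0,1,2,\dots\}$, so any threshold witnessing $\phi_{sp}(\mathcal{E})\leq t$ also witnesses $\phi_{sp}(\mathcal{E}^{(p^n)})\leq t$. Chaining the inequalities yields $\phi_a(\mathcal{E}^{(p^n)}\otimes\mathcal{L})\leq t$ for all $n\geq0$; note that this direction needs neither smoothness of $X$ nor local freeness of $\mathcal{E}$, the locally free hypothesis of Theorem~\ref{tensor} being supplied by $\mathcal{L}$.

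For the converse, assume $\phi_a(\mathcal{E}^{(p^n)}\otimes\mathcal{L})\leq t$ for every $n\geq0$. The feature I would exploit is that Fujita type vanishing supplies a twist threshold independent of the input sheaf, hence uniform in $n$. Take $\mathcal{F}=\mathcal{O}_X$ in Theorem~\ref{Fujita2} when $X$ is smooth, or in Theorem~\ref{Fujita1} when $\mathcal{E}$ is locally free (so that each $\mathcal{E}^{(p^n)}\otimes\mathcal{L}$ is again locally free, as that theorem requires). Applying the resulting vanishing to $\mathcal{G}=\mathcal{E}^{(p^n)}\otimes\mathcal{L}$, whose F-amplitude is $\leq t$ by hypothesis, yields a single integer $m_0=m(\mathcal{O}_X,\mathcal{L})$, depending only on $\mathcal{L}$ and not on $n$, with
$$H^i\bigl(X,\mathcal{E}^{(p^n)}\otimes\mathcal{L}^{m+1}\bigr)=H^i\bigl(X,\mathcal{L}^{m}\otimes(\mathcal{E}^{(p^n)}\otimes\mathcal{L})\bigr)=0$$
for all $i>t$, all $m\geq m_0$ and all $n\geq0$.

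It remains to turn this vanishing, phrased for the merely ample $\mathcal{L}$, into the regularity criterion of Remark~\ref{rm4.2} relative to a very ample polarization, and here I would invoke the independence of $\phi_{sp}$ from the chosen very ample line bundle. Choosing $c$ with $\mathcal{L}^c$ very ample and measuring the $t$-regularities of the $\mathcal{E}^{(p^n)}$ against $\mathcal{O}_X(1):=\mathcal{L}^c$, the displayed vanishing gives $H^i(X,\mathcal{E}^{(p^n)}\otimes(\mathcal{L}^c)^a)=0$ for all $i>t$, all $n\geq0$ and all $a\geq\lceil(m_0+1)/c\rceil$, which is exactly $\phi_{sp}(\mathcal{E})\leq t$ for this polarization, and hence for any. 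I expect this converse to be the main obstacle: the two delicate points are the $n$-uniformity of the twist bound and the passage from the ample $\mathcal{L}$ to a very ample polarization, resolved respectively by the uniform shape of Fujita vanishing and by the polarization independence of $\phi_{sp}$.
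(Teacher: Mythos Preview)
Your proof is correct and follows essentially the same route as the paper: the forward implication via Lemma~\ref{lemma3.4} and Theorem~\ref{tensor} together with $\phi_{sp}(\mathcal{E}^{(p^n)})\leq\phi_{sp}(\mathcal{E})$, and the converse via the uniform Fujita vanishing of Theorems~\ref{Fujita1} and~\ref{Fujita2} applied with $\mathcal{F}=\mathcal{O}_X$, then passing to a very ample power of $\mathcal{L}$. The paper's writeup is terser (it asserts the equality $\phi_{sp}(\mathcal{E}^{(p^n)})=\phi_{sp}(\mathcal{E})$ and handles the last step in one line), but the substance is identical.
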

\begin{proof}
We first assume that $\phi_{sp}(\mathcal{E})\leq t$. Since $\mathcal{L}$ is an ample line bundle, one sees that $\mathcal{L}$ is F-ample by Lemma \ref{lemma3.4}. Thus Theorem \ref{tensor} gives $$\phi_{a}(\mathcal{E}^{(p^n)}\otimes \mathcal{L})\leq\phi_{sp}(\mathcal{E}^{(p^n)})=\phi_{sp}(\mathcal{E})\leq t.$$

For the other direction, we assume that $\phi_{a}(\mathcal{E}^{(p^n)}\otimes \mathcal{L})\leq t$ for any $n\geq0$. From Theorem \ref{Fujita1} and \ref{Fujita2}, it follows that there exists an integer $m_0(\mathcal{L})$ such that $$H^i(X, \mathcal{E}^{(p^n)}\otimes \mathcal{L}^m)=0$$ for any $i>t$, $m>m_0(\mathcal{L})$ and $n\geq0$. Since $\mathcal{L}^m$ is very ample for $m\gg0$, one concludes that $\phi_{sp}(\mathcal{E})\leq t$.
\end{proof}

By Theorem \ref{sp-a}, we can translate \cite[Theorem 1]{Ara1} to F-semipositivity.
\begin{theorem}\label{Thm4.10}
Let $\mathcal{E}, \mathcal{E}_i, \mathcal{E}^j$ be coherent sheaves on $X$, where $1\leq i, j\leq m$. Assume either that $X$ is smooth or that these sheaves are locally free. Then the following statements hold.
\begin{enumerate}
  \item Given an exact sequence $0\rightarrow\mathcal{E}_1\rightarrow\mathcal{E}\rightarrow\mathcal{E}_2\rightarrow0$, we have $\phi_{sp}(\mathcal{E})\leq\max\{\phi_{sp}(\mathcal{E}_1), \phi_{sp}(\mathcal{E}_2)\}$.
  \item Let $0\rightarrow\mathcal{E}_m\rightarrow\cdots\rightarrow\mathcal{E}_0\rightarrow\mathcal{E}\rightarrow0$ be an exact sequence such that $\phi_{sp}(\mathcal{E}_i)\leq i+l$ for each $i$, then $\phi_{sp}(\mathcal{E})\leq l$.
  \item Let $0\rightarrow\mathcal{E}\rightarrow\mathcal{E}^0\rightarrow\cdots\rightarrow\mathcal{E}^m\rightarrow0$ be an exact sequence such that $\phi_{sp}(\mathcal{E}^i)\leq l-i$ for each $i$, then $\phi_{sp}(\mathcal{E})\leq l$.
  \item Let $f: Y\rightarrow X$ be a proper morphism of projective varieties such that $d$ is the maximum dimension of the closed fibers. If $\mathcal{E}$ is locally free then $\phi_{sp}(f^*\mathcal{E})\leq \phi_{sp}(\mathcal{E})+d$.
  \item If $f: Y\rightarrow X$ is an \'etale morphism of smooth projective varieties, then $\phi_{sp}(f_*\mathcal{E})= \phi_{sp}(\mathcal{E})$.
\end{enumerate}
\end{theorem}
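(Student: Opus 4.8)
The plan is to use Theorem \ref{sp-a} as a dictionary that converts every assertion about $\phi_{sp}$ into an assertion about $\phi_a$, and then quote the corresponding part of \cite[Theorem 1]{Ara1}. Throughout one first reduces to the case $p>0$: in characteristic zero all the data (the exact sequences in (1)--(3), the morphisms in (4)--(5), and the local freeness hypotheses) spread out to a common arithmetic thickening by Lemma \ref{lemma3.1}, and by Remark \ref{rm4.2} each inequality $\phi_{sp}(\mathcal F)\le N$ is equivalent to $\phi_{sp}(\mathcal F_q)\le N$ holding for almost all $q$ for a given thickening; so it suffices to establish every item over a positive characteristic fiber. Two compatibilities will be used repeatedly: since the absolute Frobenius is natural, $f\circ F_Y^n=F_X^n\circ f$ gives $(f^*\mathcal E)^{(p^n)}=f^*(\mathcal E^{(p^n)})$, and when $X$ is smooth $F_X$ is flat, while for locally free sheaves every short or long exact sequence is locally split; in either case $(F_X^n)^*$ preserves exactness.

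For parts (1)--(3) I would fix an ample line bundle $\mathcal L$ on $X$. Applying the forward direction of Theorem \ref{sp-a} turns each hypothesis $\phi_{sp}(\mathcal E_i)\le c_i$ into $\phi_a(\mathcal E_i^{(p^n)}\otimes\mathcal L)\le c_i$ for every $n\ge 0$. Fixing $n$, pull the given (co)complex back by $(F_X^n)^*$ and tensor with $\mathcal L$; both operations are exact by the remarks above, so one obtains an exact (co)complex of the twisted sheaves on $X$. Feeding this into the $\phi_a$-version of the relevant item of \cite[Theorem 1]{Ara1} yields $\phi_a(\mathcal E^{(p^n)}\otimes\mathcal L)\le c$ with the desired bound $c$; since this holds for all $n$, the backward direction of Theorem \ref{sp-a} gives $\phi_{sp}(\mathcal E)\le c$. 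Part (5) is equally direct: for an \'etale (hence finite) $f$ the square relating $F_X$, $F_Y$, $f$ is Cartesian, so flat base change gives $(f_*\mathcal E)^{(p^n)}=f_*(\mathcal E^{(p^n)})$; moreover $f^*\mathcal L$ is ample on $Y$, and the projection formula together with the identity $\phi_a(f_*\mathcal G)=\phi_a(\mathcal G)$ from \cite[Theorem 1]{Ara1} identifies $\phi_a((f_*\mathcal E)^{(p^n)}\otimes\mathcal L)$ with $\phi_a(\mathcal E^{(p^n)}\otimes f^*\mathcal L)$ for all $n$, whence Theorem \ref{sp-a} on both $X$ and $Y$ forces $\phi_{sp}(f_*\mathcal E)=\phi_{sp}(\mathcal E)$.

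The one genuinely delicate item, and what I expect to be the main obstacle, is the pullback inequality (4). Here $f^*\mathcal L$ is only nef, not ample, so one cannot feed it straight back into Theorem \ref{sp-a} on $Y$. I would proceed as follows: with $s=\phi_{sp}(\mathcal E)$ and $\mathcal L$ ample on $X$, Theorem \ref{sp-a} gives $\phi_a(\mathcal E^{(p^n)}\otimes\mathcal L)\le s$, and applying the $\phi_a$-pullback bound of \cite[Theorem 1]{Ara1} to the locally free sheaf $\mathcal E^{(p^n)}\otimes\mathcal L$ produces
$$\phi_a\big((f^*\mathcal E)^{(p^n)}\otimes f^*\mathcal L\big)\le s+d\qquad(n\ge 0).$$
To replace the nef twist $f^*\mathcal L$ by a very ample $\mathcal O_Y(1)$ uniformly in $n$, I would invoke the Fujita-type vanishing of Theorem \ref{Fujita1} with the fixed coherent sheaf $(f^*\mathcal L)^{-1}$ and the locally free sheaves $\mathcal G_n:=(f^*\mathcal E)^{(p^n)}\otimes f^*\mathcal L$, which satisfy $\phi_a(\mathcal G_n)\le s+d$: the uniformity of the bound $m((f^*\mathcal L)^{-1},\mathcal O_Y(1))$ over all sheaves of bounded F-amplitude yields a single integer $m_0$ with $H^i\big((f^*\mathcal E)^{(p^n)}(m)\big)=0$ for all $i>s+d$, $m\ge m_0$ and $n\ge 0$. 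By the characterization in Remark \ref{rm4.2} this is exactly $\phi_{sp}(f^*\mathcal E)\le s+d$. The crux is thus the passage from a nef to an ample twist uniformly in the Frobenius power, and the resolution is precisely the uniform Fujita vanishing already exploited in the proof of Theorem \ref{sp-a}.
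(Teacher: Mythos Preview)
Your approach is essentially identical to the paper's: reduce to positive characteristic by thickening, then use Theorem \ref{sp-a} to translate each $\phi_{sp}$ hypothesis into a $\phi_a$ bound for $\mathcal{E}^{(p^n)}\otimes\mathcal{L}$, apply the corresponding item of \cite[Theorem 1]{Ara1}, and translate back. Your treatment of part (4) is in fact more careful than the paper's one-line proof, since you correctly flag that $f^*\mathcal{L}$ is only nef and resolve this with the uniform Fujita vanishing of Theorem \ref{Fujita1}; the paper simply asserts that the conclusions follow from Theorem \ref{sp-a} without addressing this point.
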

\begin{proof}
Let $\mathcal{L}$ be an ample line bundle on $X$. Using a thickening, we can assume that $p>0$. Applying \cite[Theorem 1]{Ara1} to $\mathcal{E}^{(p^n)}\otimes \mathcal{L}$, one sees that the conclusions follow from Theorem \ref{sp-a}.
\end{proof}

\begin{proposition}\label{gg}
Assume that $X$ is integral with $\dim X>0$. Let $\mathcal{E}$ be a globally generated locally free sheaf on $X$ of rank $r$. Then $\phi_{sp}(\mathcal{E})<\dim X$. Furthermore, when $p=0$ we have $\phi_{sp}(\mathcal{E})<r$.
\end{proposition}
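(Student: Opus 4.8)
The plan is to reduce everything to positive characteristic and to translate the F-semipositivity of $\mathcal{E}$ into the F-amplitude of the Frobenius twists $\mathcal{E}^{(p^n)}\otimes\mathcal{L}$, which are honest ample bundles, and then feed these into Theorem \ref{ample}. When $p=0$ I would first spread $(X,\mathcal{E})$ out to a thickening and, by Lemma \ref{lemma3.1} and Remark \ref{rm4.2}, replace the two assertions by the same assertions for the fibers $(X_q,\mathcal{E}_q)$; for general $q$ these are again integral of dimension $\dim X$ with $\mathcal{E}_q$ globally generated, locally free of rank $r$. So it suffices to treat $p>0$.

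For the bound $\phi_{sp}(\mathcal{E})<\dim X$ I would argue as follows. Since $\mathcal{E}$ is globally generated, every Frobenius pullback $\mathcal{E}^{(p^n)}=(F_X^n)^*\mathcal{E}$ is again globally generated (pull back a surjection $\mathcal{O}_X^{\oplus N}\twoheadrightarrow\mathcal{E}$ and use $(F_X^n)^*\mathcal{O}_X=\mathcal{O}_X$), hence nef. Fixing an ample line bundle $\mathcal{L}$, the tensor $\mathcal{E}^{(p^n)}\otimes\mathcal{L}$ is then an ample vector bundle of rank $r$ on the integral variety $X$ (nef tensor ample is ample). Theorem \ref{ample} gives $\phi_a(\mathcal{E}^{(p^n)}\otimes\mathcal{L})<\dim X$ for every $n\geq0$, and since $\mathcal{E}$ is locally free, Theorem \ref{sp-a} turns this uniform bound into $\phi_{sp}(\mathcal{E})<\dim X$. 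This settles the first assertion in every characteristic.

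The sharper bound $\phi_{sp}(\mathcal{E})<r$ in characteristic $0$ is the real difficulty, and the argument above does not reach it: Theorem \ref{ample} yields $\phi_a<r$ only in characteristic $0$, whereas the reduction above has moved us onto the positive-characteristic fibers $X_q$, where Theorem \ref{ample} supplies merely $\phi_a<\dim X_q$. Since the rank bound genuinely fails in positive characteristic, the proof must use characteristic $0$ essentially and cannot be obtained by reducing the rank-$r$ F-semipositivity of $\mathcal{E}$ to positive characteristic. I would instead mirror the characteristic-$0$ proof of Theorem \ref{ample} (\cite[Proposition D.3]{Ara1}), whose rank bound ultimately rests on a Le Potier/Kodaira--Nakano-type vanishing valid only in characteristic $0$. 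The plan is to establish the corresponding vanishing $H^{j}(X,\mathcal{E}\otimes\mathcal{O}_X(i))=0$ for all $j\geq r$ and $i\gg0$ from such a vanishing theorem for nef bundles (as $\mathcal{E}$ is here only nef, one uses a Griffiths/Demailly--Peternell--Schneider type statement), and then to propagate it uniformly over a thickening and over all Frobenius levels by means of the Fujita-type Theorems \ref{Fujita1} and \ref{Fujita2} together with Theorem \ref{sp-a}.

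The main obstacle is precisely this last propagation. The definition of $\phi_{sp}$ decouples the Frobenius power $n$ from the ample twist: the condition $\phi_{sp}(\mathcal{E}_q)<r$ requires $H^{j}(X_q,\mathcal{E}_q^{(p^n)}\otimes\mathcal{L}_q^{\,i})=0$ for $j\geq r$ with $i$ held at a fixed threshold while $n\to\infty$, whereas a vanishing imported from characteristic $0$ naturally controls $\mathcal{E}_q^{(p^n)}\otimes\mathcal{L}_q^{\,p^n}$, with the twist growing in lockstep with the Frobenius power. Bridging this gap, that is, keeping the rank-type vanishing uniform in the Frobenius level while the twist stays bounded, is the crux, and it is exactly the point at which characteristic $0$ is indispensable; in positive characteristic the same statement fails, which is why the bound $\phi_{sp}(\mathcal{E})<r$ is asserted only over a field of characteristic $0$.
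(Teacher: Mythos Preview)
Your argument for the bound $\phi_{sp}(\mathcal{E})<\dim X$ is exactly the paper's: pass to a thickening, observe that each $\mathcal{E}_q^{(p_q^n)}\otimes\mathcal{L}_q$ is ample because $\mathcal{E}_q^{(p_q^n)}$ is globally generated, invoke Theorem \ref{ample} for the $\dim X$ bound in positive characteristic, and then apply Theorem \ref{sp-a}. So for that part you are aligned with the paper.

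For the rank bound in characteristic $0$, however, the paper does \emph{not} take the detour you sketch. It runs the very same argument: on each closed fiber $X_q$ it simply asserts, ``by Theorem \ref{ample}'', that $\phi_a(\mathcal{E}_q^{(p_q^n)}\otimes\mathcal{L}_q)<\min\{r,\dim X\}$, and then applies Theorem \ref{sp-a} to conclude $\phi_{sp}(\mathcal{E}_q)<\min\{r,\dim X\}$ for every $q$, hence $\phi_{sp}(\mathcal{E})<\min\{r,\dim X\}$. There is no separate Le~Potier/Demailly--Peternell--Schneider style argument for nef bundles, and no attempt to ``decouple'' the Frobenius power from the ample twist as you describe.

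You have, in fact, put your finger on a genuine tension in the paper's write-up: as stated, Theorem \ref{ample} gives the rank bound only in characteristic $0$, while the bundle $\mathcal{E}_q^{(p_q^n)}\otimes\mathcal{L}_q$ lives on the positive-characteristic fiber $X_q$ and is not literally a reduction of a characteristic-$0$ ample bundle. The paper treats this as unproblematic, presumably because the \emph{proof} of Arapura's Proposition~D.3 proceeds by spreading out a Le~Potier-type vanishing from characteristic $0$ to almost all fibers; one is meant to read the citation as encompassing that mechanism. Your own proposal, by contrast, acknowledges the difficulty but does not resolve it: the last two paragraphs describe a plan and its obstacle rather than an argument. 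So relative to the paper, your proof of the first bound is complete and identical, while your treatment of the second bound correctly isolates a subtle point in the exposition but does not itself supply a proof.
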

\begin{proof}
We first assume that $p=0$. Let $\mathcal{L}$ be an ample line bundle on $X$. Let $(\widetilde{X}\rightarrow \Spec A, \widetilde{\mathcal{E}}, \widetilde{\mathcal{L}})$ be a thickening of $(X, \mathcal{E}, \mathcal{L})$. We can shrink $\Spec A$ so that $\mathcal{L}_q$ is ample and $\mathcal{E}_q$ is generated by global sections for any $q\in\Spec A$. Let $p_q=\chr A/q$. Then one sees that $\mathcal{E}_q^{(p_q^n)}\otimes\mathcal{L}$ is ample for any $n\geq0$. Hence by Theorem \ref{ample}, one deduces that $\phi_a(\mathcal{E}_q^{(p_q^n)}\otimes\mathcal{L})<\min\{r, \dim X\}$. It follows from Theorem \ref{sp-a} that $\phi_{sp}(\mathcal{E}_q)<\min\{r, \dim X\}$ for any $q\in\Spec A$. Therefore we obtain the conclusions in characteristic zero.

The proof of $\phi_{sp}(\mathcal{E})<\dim X$ in the case of $p>0$ is similar.
\end{proof}

We can get rid of the local freeness assumption in the proposition above if $X$ is smooth.
\begin{proposition}\label{gg2}
Assume that $X$ is smooth with $\dim X>0$. Let $\mathcal{E}$ be a globally generated coherent sheaf on $X$. Then $\phi_{sp}(\mathcal{E})<\dim X$.
\end{proposition}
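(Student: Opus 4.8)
The plan is to exploit two features of the hypotheses: that $\mathcal{E}$, being globally generated, receives a surjection from a trivial bundle, and that on a smooth $X$ the absolute Frobenius is flat, so this surjection survives Frobenius pull-back. First I would reduce to the case $p>0$ by the spreading-out formalism of Section \ref{S3}. Choosing a surjection $\mathcal{O}_X^{\oplus N}\to\mathcal{E}$ with $N=h^0(\mathcal{E})<\infty$, I would spread it out to a thickening over $\Spec A$ and shrink $\Spec A$ so that every fiber $X_q$ is smooth of dimension $d$ and the fibered map $\mathcal{O}_{X_q}^{\oplus N}\to\mathcal{E}_q$ stays surjective. By Remark \ref{rm4.2}, the characteristic-zero statement then reduces to the statement for the fibers $\mathcal{E}_q$, so it suffices to treat $p>0$.

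Assume now $p>0$, set $d=\dim X$, and let $\mathcal{K}$ be the kernel of a surjection $\mathcal{O}_X^{\oplus N}\to\mathcal{E}$. By Remark \ref{rm4.2} it suffices to produce an integer $m$ with $H^j(\mathcal{E}^{(p^n)}(i))=0$ for all $n\geq0$, all $i\geq m$, and all $j>d-1$. Because $\dim X=d$, cohomology vanishes in degrees $>d$ automatically, so the only degree requiring attention is $j=d$.

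The key step is that smoothness makes $F_X$ flat (as already used in Proposition \ref{restriction}), so for each $n$ the pulled-back sequence $0\to\mathcal{K}^{(p^n)}\to\mathcal{O}_X^{\oplus N}\to\mathcal{E}^{(p^n)}\to0$ remains exact; here I use $(\mathcal{O}_X^{\oplus N})^{(p^n)}\cong\mathcal{O}_X^{\oplus N}$. Twisting by $\mathcal{O}_X(i)$ and taking the long exact cohomology sequence yields a surjection $H^d(\mathcal{O}_X^{\oplus N}(i))\to H^d(\mathcal{E}^{(p^n)}(i))$, since the next term $H^{d+1}(\mathcal{K}^{(p^n)}(i))$ vanishes for dimension reasons. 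By Serre's vanishing theorem there is an $m$ — independent of $n$, as the source carries no $n$ — with $H^d(\mathcal{O}_X^{\oplus N}(i))=0$ for all $i\geq m$; hence $H^d(\mathcal{E}^{(p^n)}(i))=0$ for all $i\geq m$ and all $n\geq0$. This gives $\phi_{sp}(\mathcal{E})\leq d-1<d$.

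The main obstacle to overcome is exactly the loss of local freeness: unlike in Proposition \ref{gg} one cannot resolve $\mathcal{E}$ by globally generated locally free sheaves, and a priori Frobenius pull-back need not preserve exactness of the defining sequence $0\to\mathcal{K}\to\mathcal{O}_X^{\oplus N}\to\mathcal{E}\to0$. Smoothness dissolves both difficulties at once by forcing $F_X$ to be flat, after which the argument is driven by the observation that only the top cohomology $H^d$ can obstruct the bound $\phi_{sp}<d$, and that this group is controlled by the $n$-independent trivial bundle. Alternatively, one could take a length-$\leq d$ locally free resolution $0\to\mathcal{F}_s\to\cdots\to\mathcal{F}_0\to\mathcal{E}\to0$ with $\mathcal{F}_0=\mathcal{O}_X^{\oplus N}$, note $\phi_{sp}(\mathcal{F}_0)=0$ while $\phi_{sp}(\mathcal{F}_i)\leq d\leq i+(d-1)$ for $i\geq1$, and conclude by Theorem \ref{Thm4.10}(2) with $l=d-1$, invoking the smooth case of that theorem.
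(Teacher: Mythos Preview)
Your proof is correct and takes a genuinely different route from the paper's. The paper argues by induction on $\dim X$: in the base case $d=1$ it splits $\mathcal{E}$ into its torsion and locally free parts and invokes Proposition~\ref{gg} (hence ultimately Theorem~\ref{ample} on the F-amplitude of ample bundles), and for the inductive step it applies the restriction inequality $\phi_{sp}(\mathcal{E})\leq\phi_{sp}(\mathcal{E}|_H)+1$ of Proposition~\ref{restriction}. Your argument bypasses all of this: you only need that smoothness makes $F_X$ flat, so the surjection $\mathcal{O}_X^{\oplus N}\to\mathcal{E}$ remains surjective after any $(F_X^n)^*$, and then the $n$-independent surjection $H^d(\mathcal{O}_X(i))^{\oplus N}\twoheadrightarrow H^d(\mathcal{E}^{(p^n)}(i))$ together with Serre vanishing kills the only relevant cohomology degree uniformly in $n$. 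This is more elementary and self-contained, avoiding the input from \cite{Ara1} that underlies Proposition~\ref{gg}; the paper's approach, on the other hand, illustrates how the restriction proposition propagates bounds across dimensions and reuses results already in place. Your alternative via Theorem~\ref{Thm4.10}(2) is also valid and is essentially a packaged form of the same idea.
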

\begin{proof}
Since the F-semipositivity is invariant under field extensions, we may assume that $k$ is algebraically closed. If $\dim X=1$, one sees that $\mathcal{E}$ is a direct sum of the torsion free part of $\mathcal{E}$ with the torsion part. Since the torsion sheaves on a curve have vanishing  cohomology of positive degree, we may assume that $\mathcal{E}$ is locally free. Then one gets $\phi_{sp}(\mathcal{E})<1$ by Proposition \ref{gg}. If $\dim X>1$, then induction on $\dim X$ and Proposition \ref{restriction} yields the conclusion.
\end{proof}

\subsection{Asymptotic Riemann-Roch theorem}

By Theorem \ref{tensor}, one sees that the Fujita type vanishing theorems (Theorem \ref{Fujita1} and \ref{Fujita2}) still hold for F-semipositivity.

\begin{theorem}\label{Fujita3}
Let $\mathcal{L}$ be an ample line bundle on $X$. Given any coherent sheaf $\mathcal{F}$ on $X$, there exists an integer $m(\mathcal{F}, \mathcal{L})$ such that
$$H^i(X, \mathcal{F}\otimes\mathcal{L}^m\otimes\mathcal{G})=0$$ for all $i>t$, $m\geq m(\mathcal{F}, \mathcal{L})$ and any locally free sheaf $\mathcal{G}$ on $X$ with $\phi_{sp}(\mathcal{G})\leq t$.
\end{theorem}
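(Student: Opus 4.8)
The plan is to deduce this from the Fujita type vanishing for F-amplitude, Theorem \ref{Fujita1}, by pushing one power of $\mathcal{L}$ into $\mathcal{G}$ in order to convert the hypothesis $\phi_{sp}(\mathcal{G})\leq t$ into an honest F-amplitude bound. The device that effects this conversion is Theorem \ref{tensor}, which is exactly the mechanism by which F-semipositivity is compared with F-amplitude after tensoring with something F-ample.

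First I would note that the ample line bundle $\mathcal{L}$ is F-ample: in characteristic $p>0$ this is Lemma \ref{lemma3.4}, and in characteristic $0$ it holds because $\mathcal{L}_q$ is ample, hence F-ample, on almost all fibers of a thickening. Being a line bundle, $\mathcal{L}$ is locally free, so the hypotheses of Theorem \ref{tensor} are met by $\mathcal{E}=\mathcal{L}$ together with the given locally free sheaf $\mathcal{G}$. That theorem then gives
$$\phi_a(\mathcal{L}\otimes\mathcal{G})\leq\phi_{sp}(\mathcal{G})\leq t,$$
and $\mathcal{L}\otimes\mathcal{G}$ is itself locally free. Thus the single twist $\mathcal{L}\otimes\mathcal{G}$ already satisfies the F-amplitude requirement of Theorem \ref{Fujita1}.

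Next I would apply Theorem \ref{Fujita1} to the coherent sheaf $\mathcal{F}$ and the ample line bundle $\mathcal{L}$, obtaining an integer $m(\mathcal{F},\mathcal{L})$, depending only on $\mathcal{F}$ and $\mathcal{L}$, such that $H^i(X,\mathcal{F}\otimes\mathcal{L}^m\otimes\mathcal{H})=0$ for all $i>t$, all $m\geq m(\mathcal{F},\mathcal{L})$, and every locally free $\mathcal{H}$ with $\phi_a(\mathcal{H})\leq t$. Specializing to $\mathcal{H}=\mathcal{L}\otimes\mathcal{G}$ and rewriting $\mathcal{F}\otimes\mathcal{L}^m\otimes(\mathcal{L}\otimes\mathcal{G})=\mathcal{F}\otimes\mathcal{L}^{m+1}\otimes\mathcal{G}$, I obtain $H^i(X,\mathcal{F}\otimes\mathcal{L}^{m+1}\otimes\mathcal{G})=0$ in the stated range; replacing the constant $m(\mathcal{F},\mathcal{L})$ by $m(\mathcal{F},\mathcal{L})+1$ yields precisely the desired vanishing.

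The only step that is more than bookkeeping is the uniformity of the constant, and it is taken care of for free: because the bound furnished by Theorem \ref{Fujita1} is uniform across the entire class of locally free sheaves of F-amplitude $\leq t$, and because $\mathcal{L}\otimes\mathcal{G}$ lands in that class for every admissible $\mathcal{G}$, the single constant $m(\mathcal{F},\mathcal{L})+1$ serves simultaneously for all locally free $\mathcal{G}$ with $\phi_{sp}(\mathcal{G})\leq t$, as the statement demands. No separate treatment of the two characteristics is needed: Theorem \ref{tensor} and Theorem \ref{Fujita1} both hold in all characteristics, and $\mathcal{L}$ is F-ample in all characteristics, so the reduction goes through verbatim.
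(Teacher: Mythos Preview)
Your proposal is correct and follows exactly the approach the paper intends: the paper's entire proof consists of the sentence ``By Theorem \ref{tensor}, one sees that the Fujita type vanishing theorems (Theorem \ref{Fujita1} and \ref{Fujita2}) still hold for F-semipositivity,'' and your argument is precisely the expansion of that sentence, absorbing one copy of $\mathcal{L}$ into $\mathcal{G}$ so that Theorem \ref{tensor} converts $\phi_{sp}(\mathcal{G})\leq t$ into $\phi_a(\mathcal{L}\otimes\mathcal{G})\leq t$ and Theorem \ref{Fujita1} then applies uniformly.
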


\begin{theorem}\label{Fujita4}
Assume that $X$ is smooth. Let $\mathcal{L}$ be an ample line bundle on $X$. Given any locally free sheaf $\mathcal{F}$ on $X$, there exists an integer $m(\mathcal{F}, \mathcal{L})$ such that
$$H^i(X, \mathcal{F}\otimes\mathcal{L}^m\otimes\mathcal{G})=0$$ for all $i>t$, $m\geq m(\mathcal{F}, \mathcal{L})$ and any coherent sheaf $\mathcal{G}$ on $X$ with $\phi_{sp}(\mathcal{G})\leq t$.
\end{theorem}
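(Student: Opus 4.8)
The plan is to deduce this F-semipositivity version directly from its F-amplitude counterpart, Theorem \ref{Fujita2}, exactly as Theorem \ref{Fujita3} is extracted from Theorem \ref{Fujita1}. The mechanism is Theorem \ref{tensor}: tensoring a sheaf of controlled F-semipositivity by an F-ample line bundle upgrades that control to F-amplitude. More precisely, the key observation is that if $\phi_{sp}(\mathcal{G})\le t$ and $\mathcal{L}$ is ample, then $\mathcal{L}\otimes\mathcal{G}$ has F-amplitude at most $t$, so the already-established vanishing for F-ample twists applies after absorbing a single factor of $\mathcal{L}$.

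Concretely, first I would note that $\mathcal{L}$, being ample, is F-ample by Lemma \ref{lemma3.4}, and that $\mathcal{L}$ is locally free as a line bundle. Since one of the two factors $\mathcal{L}$, $\mathcal{G}$ is locally free, Theorem \ref{tensor} yields
$$\phi_a(\mathcal{L}\otimes\mathcal{G})\le\phi_{sp}(\mathcal{G})\le t.$$
Now I would apply Theorem \ref{Fujita2} to the locally free sheaf $\mathcal{F}$ and the ample line bundle $\mathcal{L}$: it furnishes an integer $m(\mathcal{F},\mathcal{L})$, depending only on $\mathcal{F}$ and $\mathcal{L}$, such that $H^i(X,\mathcal{F}\otimes\mathcal{L}^{m'}\otimes\mathcal{H})=0$ for all $i>t$, all $m'\ge m(\mathcal{F},\mathcal{L})$, and every coherent sheaf $\mathcal{H}$ with $\phi_a(\mathcal{H})\le t$. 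Taking $\mathcal{H}=\mathcal{L}\otimes\mathcal{G}$ and rewriting $\mathcal{F}\otimes\mathcal{L}^{m'}\otimes(\mathcal{L}\otimes\mathcal{G})=\mathcal{F}\otimes\mathcal{L}^{m'+1}\otimes\mathcal{G}$, I would set $m=m'+1$; then $H^i(X,\mathcal{F}\otimes\mathcal{L}^m\otimes\mathcal{G})=0$ for $i>t$ and $m\ge m(\mathcal{F},\mathcal{L})+1$, which after renaming the constant is the assertion.

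The point that requires the most care is the \emph{uniformity} of the bound $m(\mathcal{F},\mathcal{L})$: it must be independent of the varying sheaf $\mathcal{G}$. This is guaranteed precisely because the analogous constant in Theorem \ref{Fujita2} is uniform over all coherent $\mathcal{H}$ with $\phi_a(\mathcal{H})\le t$, and the family $\{\mathcal{L}\otimes\mathcal{G}:\phi_{sp}(\mathcal{G})\le t\}$ sits inside this class. The only other thing to check is that the argument survives in characteristic zero, but this is automatic: both Theorem \ref{tensor} and Theorem \ref{Fujita2} already incorporate the passage to a thickening and the invariance of the relevant invariants under field extension, so no additional spreading-out is needed here. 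I therefore do not expect a genuine obstacle; the content lies entirely in the two cited theorems, and the present statement is a formal consequence of combining them.
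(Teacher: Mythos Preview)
Your proposal is correct and matches the paper's approach exactly: the paper states (immediately before Theorems \ref{Fujita3} and \ref{Fujita4}) that these follow from Theorem \ref{tensor} applied to Theorems \ref{Fujita1} and \ref{Fujita2}, and your argument spells out precisely this reduction, absorbing one copy of $\mathcal{L}$ into $\mathcal{G}$ to convert the F-semipositivity bound into an F-amplitude bound. Your attention to the uniformity of $m(\mathcal{F},\mathcal{L})$ is well placed and is indeed guaranteed by the uniformity in Theorem \ref{Fujita2}.
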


\begin{proposition}[Growth of cohomology]\label{prop4.11}
Suppose that $p>0$. Let $\mathcal{F}$ and $\mathcal{G}$ be coherent sheaves on $X$. We assume either that $X$ is smooth and $\mathcal{F}$ is locally free or that $\mathcal{G}$ is locally free. Then we have $$h^i(X,\mathcal{F}\otimes\mathcal{G}^{(p^n)})=O(p^{nd})$$ for every $i\geq0$, where $d=\dim X$.
\end{proposition}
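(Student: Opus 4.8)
The plan is to reduce both cases to a single growth estimate for negative twists of a fixed sheaf, and then to prove that estimate by hand. Fix a very ample $\mathcal{O}_X(1)$ and write $q=p^n$. Since the dimension of the cohomology groups is unchanged under field extension (flat base change), as is already used in the proof of Theorem \ref{Fujita2}, we may assume that $k$ is infinite; this is needed only to pick general hyperplane sections below. By Serre's theorem $\mathcal{G}$ is $m$-regular for some $m$, so Lemma \ref{lemma3.8} gives, for $N=d$, a resolution
\[
V_d\otimes\mathcal{O}_X(-m-dR)\to\cdots\to V_0\otimes\mathcal{O}_X(-m)\to\mathcal{G}\to0,
\]
with $R=\max\{1,\reg_0(\mathcal{O}_X)\}$. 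Pulling back by $(F_X^n)^*$ turns each $\mathcal{O}_X(-m-jR)$ into $\mathcal{O}_X(-q(m+jR))$, and after tensoring with $\mathcal{F}$ the resulting complex has terms $\mathcal{F}\big(-q(m+jR)\big)^{\oplus v_j}$ with $v_j=\dim V_j$. Granting that this complex is exact and computes $\mathcal{F}\otimes\mathcal{G}^{(p^n)}$, I would split it into the short exact sequences of its syzygies and use $H^{>d}(X,-)=0$ to obtain, for every $i$,
\[
h^i\big(X,\mathcal{F}\otimes\mathcal{G}^{(p^n)}\big)\le\sum_{j=0}^{d}v_j\,h^{i+j}\big(X,\mathcal{F}(-q(m+jR))\big).
\]
Thus the problem reduces to the growth of $h^{\ell}\big(\mathcal{F}(-qa)\big)$ for the finitely many fixed integers $a=m+jR>0$.

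Next I would justify the exactness claim in each of the two cases. In Case A, where $X$ is smooth and $\mathcal{F}$ is locally free, smoothness of $X$ makes $F_X$ flat (as used in the proof of Proposition \ref{restriction}), so $(F_X^n)^*$ preserves exactness, and tensoring by the locally free $\mathcal{F}$ is exact as well. In Case B, where $\mathcal{G}$ is locally free but $X$ may be singular, flatness is unavailable, and here the key observation is local splitting: because $\mathcal{G}$ is locally free, every syzygy appearing in the resolution above is locally free, so at each stage the resolution is, locally on $X$, a split short exact sequence of vector bundles. Local splitting is preserved by any pullback and by tensoring with any sheaf; hence the Frobenius pullback of the resolution remains exact and stays exact after tensoring with the arbitrary coherent $\mathcal{F}$. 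In either case the displayed inequality holds.

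For the remaining growth estimate I would prove the elementary fact that for any coherent sheaf $\mathcal{F}$ on $X$ and any fixed integer $a>0$ one has $h^{\ell}\big(\mathcal{F}(-qa)\big)=O\big(q^{\dim\Supp\mathcal{F}}\big)=O(p^{nd})$ for every $\ell$, by induction on $e=\dim\Supp\mathcal{F}$. When $e=0$ the sheaf has finite length, so $h^0(\mathcal{F}(t))$ equals that (constant) length and $h^{\ell}(\mathcal{F}(t))=0$ for $\ell>0$. For $e\ge1$, a general section $s\in H^0(\mathcal{O}_X(1))$ avoids the associated points of $\mathcal{F}$, yielding an exact sequence $0\to\mathcal{F}(t-1)\xrightarrow{\,\cdot s\,}\mathcal{F}(t)\to\mathcal{Q}(t)\to0$ with $\dim\Supp\mathcal{Q}\le e-1$; the long exact sequence bounds the increment $h^{\ell}(\mathcal{F}(t-1))-h^{\ell}(\mathcal{F}(t))$ by $h^{\ell-1}(\mathcal{Q}(t))=O(|t|^{e-1})$, and telescoping from $t=0$ down to $t=-qa$ gives $h^{\ell}(\mathcal{F}(-qa))=O(q^{e})$. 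Since $e\le d$, feeding this into the inequality of the first paragraph and summing the finitely many terms yields $h^i(X,\mathcal{F}\otimes\mathcal{G}^{(p^n)})=O(p^{nd})$, as required.

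The two genuinely substantive points are the local-splitting observation, which is exactly what makes Case B work in the absence of flatness of Frobenius on a possibly singular $X$, and the polynomial-growth estimate of the last paragraph, which is the technical heart and the only place where the general hyperplane section (hence the reduction to infinite $k$) enters. Everything else is routine bookkeeping with resolutions and long exact sequences.
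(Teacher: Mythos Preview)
Your proof is correct and follows essentially the same approach as the paper: resolve $\mathcal{G}$ via Lemma \ref{lemma3.8}, pull back by Frobenius, tensor with $\mathcal{F}$, split into short exact sequences, and bound $h^i(\mathcal{F}\otimes\mathcal{G}^{(p^n)})$ by a finite sum of terms $h^{i+j}\big(\mathcal{F}(-p^n(m+jR))\big)$. The paper simply asserts exactness ``by our assumptions'' where you supply the explicit local-splitting argument in Case B, and it cites \cite[Example 1.2.33]{Laz} for the final polynomial growth bound that you instead prove directly by the hyperplane-section induction.
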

\begin{proof}
Let $\mathcal{O}_X(1)$ be a very ample line bundle on $X$. Let $R=\max\{1, \reg_0(\mathcal{O}_X)\}$ and $m=\reg_0(\mathcal{G})$. By Lemma \ref{lemma3.8}, we obtain an exact sequence $$\cdots\rightarrow\mathcal{G}_{d}\rightarrow\cdots\rightarrow\mathcal{G}_{0}\rightarrow\mathcal{G}\rightarrow0,$$
where $d=\dim X$ and $\mathcal{G}_{i}=V_i\otimes\mathcal{O}_X(-m-iR)$ for $i\leq d$. Pulling back it via $(F_X^n)^*$ and then tensoring this by $\mathcal{F}$, by our assumptions one gets the exact sequence
$$\cdots\rightarrow\mathcal{F}\otimes V_1\otimes\mathcal{O}_X(-mp^n-Rp^n)\rightarrow \mathcal{F}\otimes V_0\otimes\mathcal{O}_X(-mp^n)\rightarrow\mathcal{F}\otimes\mathcal{G}^{(p^n)}\rightarrow0.$$
Therefore, one obtains
$$
h^i(\mathcal{F}\otimes\mathcal{G}^{(p^n)})\leq\sum_{j=0}^{d}h^{i+j}(\mathcal{F}\otimes V_j\otimes\mathcal{O}_X(-mp^n-jRp^n)).
$$
The desired conclusion follows from \cite[Example 1.2.33]{Laz}.
\end{proof}

We conclude with a result that strengthens the statement above.
\begin{proposition}\label{prop4.12}
Suppose that $p>0$. Let $\mathcal{F}$ and $\mathcal{G}$ be coherent sheaves on $X$. We assume either that $X$ is smooth and $\mathcal{F}$ is locally free or that $\mathcal{G}$ is locally free. Then we have $$h^i(X,\mathcal{F}\otimes\mathcal{G}^{(p^n)})=O(p^{nd-n})$$ for $i>\phi_{sp}(\mathcal{G})$, where $d=\dim X$.
\end{proposition}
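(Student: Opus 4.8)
The plan is to improve the crude estimate $O(p^{nd})$ of Proposition \ref{prop4.11} by a factor $p^{n}$, by passing to a general hyperplane section and exploiting the uniform Fujita type vanishing that is now available for F-semipositivity (Theorems \ref{Fujita3} and \ref{Fujita4}). Since both $\phi_{sp}$ and the dimensions $h^i$ are unchanged under base field extension (Lemma \ref{ext2}), I first enlarge $k$ so as to assume it infinite. Write $t=\phi_{sp}(\mathcal{G})$ and observe that every Frobenius pullback $\mathcal{G}^{(p^n)}$ again satisfies $\phi_{sp}(\mathcal{G}^{(p^n)})\le t$, directly from Remark \ref{rm4.2}. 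The whole point is that the growth of $h^i$ will be controlled on a $(d-1)$-dimensional hyperplane section $H$, and the gain of one power of $p$ will come from the fact that only a number of twists \emph{independent of $n$} is needed to reach the vanishing range.

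First I fix a very ample $\mathcal{O}_X(1)$ and a general divisor $H\in|\mathcal{O}_X(1)|$, cut out by a general section $s$. In the case where $X$ is smooth and $\mathcal{F}$ is locally free, tensoring the exact sequence (\ref{4.1}) of Proposition \ref{restriction} with $\mathcal{F}$ yields, for the same $H$ and all $n\ge0$ and all $j\in\mathbb{Z}$, a short exact sequence
\[
0\to \mathcal{F}\otimes\mathcal{G}^{(p^n)}(j-1)\to \mathcal{F}\otimes\mathcal{G}^{(p^n)}(j)\to \mathcal{F}|_H\otimes(\mathcal{G}|_H)^{(p^n)}(j)\to0 .
\]
In the case where $\mathcal{G}$ is locally free, the same sequence is obtained by tensoring $0\to\mathcal{O}_X(-1)\xrightarrow{s}\mathcal{O}_X\to\mathcal{O}_H\to0$ with the locally free sheaf $\mathcal{F}\otimes\mathcal{G}^{(p^n)}(j)$, the vanishing of the relevant $\mathcal{T}or$ being guaranteed by choosing $H$ to avoid the associated points of $\mathcal{F}$, which are $n$-independent because $\mathcal{G}^{(p^n)}(j)$ is locally free. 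In either case $\mathcal{F}|_H$ and $\mathcal{G}|_H$ satisfy the hypotheses of Proposition \ref{prop4.11} on the $(d-1)$-dimensional scheme $H$.

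Next I invoke the uniform vanishing. Applying Theorem \ref{Fujita3} (when $\mathcal{G}$ is locally free) or Theorem \ref{Fujita4} (when $X$ is smooth and $\mathcal{F}$ is locally free) to the family $\{\mathcal{G}^{(p^n)}\}_{n\ge0}$, all of whose members have F-semipositivity $\le t$, produces a single integer $M=m(\mathcal{F},\mathcal{O}_X(1))$, independent of $n$, with
\[
H^i\big(X,\mathcal{F}\otimes\mathcal{G}^{(p^n)}(m)\big)=0\qquad\text{for all } i>t,\ m\ge M,\ n\ge0 .
\]
Running the long exact cohomology sequences of the short exact sequences above for $j=1,\dots,M$, the connecting maps give, for $i>t$, the inequality $h^i(\mathcal{F}\otimes\mathcal{G}^{(p^n)}(j-1))\le h^{i-1}(H,\mathcal{F}|_H\otimes(\mathcal{G}|_H)^{(p^n)}(j))+h^i(\mathcal{F}\otimes\mathcal{G}^{(p^n)}(j))$. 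Telescoping from $j=1$ to $j=M$ and using $h^i(\mathcal{F}\otimes\mathcal{G}^{(p^n)}(M))=0$ yields
\[
h^i\big(X,\mathcal{F}\otimes\mathcal{G}^{(p^n)}\big)\le\sum_{j=1}^{M} h^{i-1}\big(H,\mathcal{F}|_H\otimes(\mathcal{G}|_H)^{(p^n)}(j)\big).
\]
By Proposition \ref{prop4.11} applied on $H$ each summand is $O(p^{n(d-1)})$, and since the number $M$ of summands does not depend on $n$, the whole sum is $O(p^{n(d-1)})=O(p^{nd-n})$, as desired.

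The main obstacle is securing uniformity in $n$: both the vanishing threshold $M$ and the hyperplane $H$ must be chosen once and for all. The threshold is uniform precisely because Theorems \ref{Fujita3} and \ref{Fujita4} furnish a bound valid simultaneously for every sheaf of F-semipositivity $\le t$, and this is exactly the feature that converts the naive $O(p^{nd})$ of Proposition \ref{prop4.11} into the sharper $O(p^{nd-n})$: the telescoping then uses only $M$ steps rather than a number growing with $n$. The uniformity of $H$ causes no difficulty, since the associated points it must avoid are those of $\mathcal{F}$ alone.
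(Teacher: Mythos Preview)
Your proof is correct and follows essentially the same strategy as the paper: reduce to a hyperplane section via the Fujita type vanishing (Theorems \ref{Fujita3}/\ref{Fujita4}) uniformly in $n$, and then invoke Proposition \ref{prop4.11} on $H$. The only cosmetic difference is that the paper chooses $H$ directly in a sufficiently high multiple of the very ample system so that a \emph{single} twist by $\mathcal{O}_X(H)$ already kills $H^i$, avoiding your $M$-step telescoping; this is exactly your argument with $H\in|\mathcal{O}_X(M)|$ instead of $|\mathcal{O}_X(1)|$. One small slip: in the case where $\mathcal{G}$ is locally free you call $\mathcal{F}\otimes\mathcal{G}^{(p^n)}(j)$ ``the locally free sheaf'', but of course only the factor $\mathcal{G}^{(p^n)}(j)$ is locally free; your subsequent remark about avoiding the associated points of $\mathcal{F}$ shows you have the correct argument in mind.
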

\begin{proof}
Since the F-semipositivity and the dimension of the cohomology are invariant under field extensions, we may assume that $k$ is infinite.
Thanks to Theorem \ref{Fujita3} and \ref{Fujita4}, there exists an effective very ample Cartier divisor $H$ having the property that
$$H^i(X, \mathcal{F}\otimes\mathcal{G}^{(p^n)}\otimes\mathcal{O}_X(H))=0$$ for all $i>\phi_{sp}(\mathcal{G})$ and $n\geq0$. By the same argument as in the proof of Proposition \ref{restriction}, we have the exact sequence
$$0\rightarrow\mathcal{F}\otimes\mathcal{G}^{(p^n)}\rightarrow\mathcal{F}\otimes\mathcal{G}^{(p^n)}(H)
\rightarrow\mathcal{F}\otimes\mathcal{G}^{(p^n)}(H)\otimes\mathcal{O}_H\rightarrow0.$$
Therefore, one obtains $$h^i(X, \mathcal{F}\otimes\mathcal{G}^{(p^n)})\leq h^{i-1}(H, \mathcal{F}\otimes\mathcal{G}^{(p^n)}(H)\otimes\mathcal{O}_H)$$ for $i>\phi_{sp}(\mathcal{G})$.
Since $$h^{i-1}(H, \mathcal{F}\otimes\mathcal{G}^{(p^n)}(H)\otimes\mathcal{O}_H)=O(p^{nd-n})$$ by Proposition \ref{prop4.11}, we complete the proof.
\end{proof}

\begin{corollary}[Asymptotic Riemann-Roch]\label{cor4.13}
Suppose that $X$ is smooth with $\dim X=d$ and $p>0$. Let $\mathcal{E}$ be a coherent sheaf on $X$ with $\phi_{sp}(\mathcal{E})\leq1$. One has
$$h^0(X, \mathcal{E}^{(p^n)})-h^1(X, \mathcal{E}^{(p^n)})=\ch_d (\mathcal{E})p^{nd}+O(p^{n(d-1)}).$$
\end{corollary}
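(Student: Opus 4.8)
The plan is to reduce the computation of $h^0-h^1$ to the Euler characteristic and then evaluate the latter by Hirzebruch--Riemann--Roch. Writing $\chi(X,\mathcal{E}^{(p^n)})=\sum_{i=0}^{d}(-1)^i h^i(X,\mathcal{E}^{(p^n)})$, we have $h^0(X,\mathcal{E}^{(p^n)})-h^1(X,\mathcal{E}^{(p^n)})=\chi(X,\mathcal{E}^{(p^n)})-\sum_{i\geq2}(-1)^i h^i(X,\mathcal{E}^{(p^n)})$, so it suffices to understand these two pieces separately: a main term coming from $\chi$, and an error term coming from the cohomology in degrees $\geq2$, which is exactly where the hypothesis $\phi_{sp}(\mathcal{E})\leq1$ will be used.

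For the Euler characteristic, since $X$ is smooth and projective, Hirzebruch--Riemann--Roch (valid over any field) gives $\chi(X,\mathcal{E}^{(p^n)})=\int_X\ch(\mathcal{E}^{(p^n)})\td(X)$. The key input is the behaviour of the Chern character under the absolute Frobenius: because $(F_X^n)^*$ acts on the degree-$i$ part of the Chow ring by multiplication by $p^{ni}$, one has $\ch_i(\mathcal{E}^{(p^n)})=p^{ni}\ch_i(\mathcal{E})$ for each $i$. Substituting this and extracting the degree-$d$ component of the integrand, the summand $i=d$ contributes exactly $p^{nd}\ch_d(\mathcal{E})$ (since $\td_0(X)=1$), while every summand with $i\leq d-1$ is multiplied by $p^{ni}$ and paired with $\td_{d-i}(X)$, hence contributes a term of order at most $p^{n(d-1)}$. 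Thus $\chi(X,\mathcal{E}^{(p^n)})=\ch_d(\mathcal{E})p^{nd}+O(p^{n(d-1)})$.

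For the higher cohomology, I would apply Proposition \ref{prop4.12} with $\mathcal{F}=\mathcal{O}_X$ (locally free) and $\mathcal{G}=\mathcal{E}$; the smoothness of $X$ puts us in the allowed case. Since $\phi_{sp}(\mathcal{E})\leq1$, the proposition yields $h^i(X,\mathcal{E}^{(p^n)})=O(p^{nd-n})=O(p^{n(d-1)})$ for every $i\geq2$. As there are only finitely many such terms, their alternating sum is again $O(p^{n(d-1)})$. Combining with the previous paragraph then gives $h^0(X,\mathcal{E}^{(p^n)})-h^1(X,\mathcal{E}^{(p^n)})=\ch_d(\mathcal{E})p^{nd}+O(p^{n(d-1)})$, as desired.

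The routine parts are standard, and the only point requiring care is the Frobenius Chern-character identity $\ch_i(\mathcal{E}^{(p^n)})=p^{ni}\ch_i(\mathcal{E})$; I expect this, together with the legitimacy of Hirzebruch--Riemann--Roch in positive characteristic, to be the main thing to pin down. Once it is in place, the estimate from Proposition \ref{prop4.12} does the real work of cleanly separating the leading term $\ch_d(\mathcal{E})p^{nd}$ from the $O(p^{n(d-1)})$ remainder.
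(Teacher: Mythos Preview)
Your proposal is correct and follows essentially the same approach as the paper: both compute $\chi(X,\mathcal{E}^{(p^n)})$ via Riemann--Roch using $\ch_i(\mathcal{E}^{(p^n)})=p^{ni}\ch_i(\mathcal{E})$, and then invoke Proposition~\ref{prop4.12} (with $\mathcal{F}=\mathcal{O}_X$) to absorb the $h^i$ for $i\geq2$ into the $O(p^{n(d-1)})$ term. Your write-up is in fact slightly more explicit than the paper's about the decomposition $h^0-h^1=\chi-\sum_{i\geq2}(-1)^ih^i$ and about verifying the hypotheses of Proposition~\ref{prop4.12}.
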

\begin{proof}
By Riemann-Roch thorem, one sees that
\begin{eqnarray*}
% \nonumber % Remove numbering (before each equation)
\chi(X, \mathcal{E}^{(p^n)}) &=& \ch(\mathcal{E}^{(p^n)})\td(X)\\
&=&\ch_d (\mathcal{E})p^{nd}-\frac{1}{2}K_X\ch_{d-1} (\mathcal{E})p^{n(d-1)}+O(p^{n(d-2)}).
\end{eqnarray*}
Hence the desired formula follows from Proposition \ref{prop4.12}.
\end{proof}

\section{Bogomolov type inequalities}\label{S5}
The aim of this section is to prove Theorem \ref{Bog1}, Corollary \ref{Bog4} and Theorem \ref{Bog2}. Let $k$ be an algebraically closed field with $\chr(k)=p$. Let $X$ be a smooth $d$-dimensional projective variety defined over $k$ with an ample divisor $H$.

We give a more general version of Theorem \ref{Bog1}:
\begin{theorem}\label{Thm5.1}
Assume that $p=0$. Let $\mathcal{E}$ be a rank $r$ torsion free sheaf on $X$ with $\phi_{sp}(\mathcal{E})\leq1$. For any $1\leq t\leq d$, we have
$$H^{d-t}\ch_t(\mathcal{E})\leq\max\left\{\frac{r(\mu_H^+(\mathcal{E}))^t}{t!(H^d)^{t-1}}, 0\right\}.$$
\end{theorem}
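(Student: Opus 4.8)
The plan is to prove the top-degree case $t=d$ directly, by spreading out to positive characteristic and combining asymptotic Riemann--Roch with Langer's estimate, and then to deduce the remaining cases by restriction to a complete intersection. First I would reduce to the situation where $H$ is very ample: replacing $H$ by $mH$ for $m\gg0$, one has $\mu_{mH}^+(\mathcal{E})=m^{d-1}\mu_H^+(\mathcal{E})$, $(mH)^d=m^dH^d$ and $(mH)^{d-t}\ch_t(\mathcal{E})=m^{d-t}H^{d-t}\ch_t(\mathcal{E})$, so both sides of the claimed inequality scale by the common positive factor $m^{d-t}$; hence the inequality for $mH$ is equivalent to the one for $H$, and this legitimizes the use of Theorem~\ref{Langer1}.

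For the case $t=d$, I fix an arithmetic thickening of $(X,\mathcal{E},H)$ and a general closed point $q$, so that for $p:=p_q$ large the fibre $(X_q,\mathcal{E}_q,H_q)$ is smooth with $H_q$ very ample, $\mathcal{E}_q$ torsion-free of rank $r$, $\phi_{sp}(\mathcal{E}_q)\leq1$, and the quantities $\ch_d(\mathcal{E}_q)=\ch_d(\mathcal{E})$, $H_q^d=H^d$, $\mu_{H_q}^+(\mathcal{E}_q)=\mu_H^+(\mathcal{E})$ are preserved. Since $\phi_{sp}(\mathcal{E}_q^{(p^n)})=\phi_{sp}(\mathcal{E}_q)\leq1$ and $\ch_d(\mathcal{E}_q^{(p^n)})=p^{nd}\ch_d(\mathcal{E})$, Corollary~\ref{cor4.13} together with $h^1\geq0$ gives the lower bound $\ch_d(\mathcal{E})\,p^{nd}\leq h^0(X_q,\mathcal{E}_q^{(p^n)})+O(p^{n(d-1)})$. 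For the matching upper bound I apply Theorem~\ref{Langer1} to $\mathcal{E}_q^{(p^n)}$, first controlling its maximal slope by Theorem~\ref{Langer2}: for a suitable nef $B$ one gets $\mu_{H_q}^+(\mathcal{E}_q^{(p^n)})\leq p^nC_p$ with $C_p=\mu_H^+(\mathcal{E})+\tfrac{r-1}{p-1}BH^{d-1}$. When $C_p\geq0$, monotonicity of the binomial in Theorem~\ref{Langer1} yields $h^0(X_q,\mathcal{E}_q^{(p^n)})\leq rH^d\binom{p^nC_p/H^d+d+f(r)}{d}$, a polynomial of degree $d$ in $p^n$ with leading coefficient $rC_p^d/(d!(H^d)^{d-1})$; when $\mu_H^+(\mathcal{E})<0$ and $p$ is large, $C_p<0$ forces $h^0=0$. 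Combining the two estimates, dividing by $p^{nd}$ and letting $n\to\infty$ for fixed $p$ annihilates every $O(p^{n(d-1)})$ term and the lower-order part of the binomial, leaving $\ch_d(\mathcal{E})\leq rC_p^d/(d!(H^d)^{d-1})$ when $C_p\geq0$ and $\ch_d(\mathcal{E})\leq0$ otherwise; finally letting $p\to\infty$ sends $C_p\to\mu_H^+(\mathcal{E})$, giving $\ch_d(\mathcal{E})\leq\max\{r(\mu_H^+(\mathcal{E}))^d/(d!(H^d)^{d-1}),0\}$.

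To pass to $1\leq t<d$, I restrict to a general complete intersection. Pick general $D_1,\dots,D_{d-t}\in|aH|$ with $a\gg0$ and set $Y=D_1\cap\cdots\cap D_{d-t}$, smooth of dimension $t$, and write $A=a^{d-t}$, so $[Y]=A\,H^{d-t}$. Then $\mathcal{E}|_Y$ is torsion-free of rank $r$, and iterating Proposition~\ref{restriction} (using that $\phi_{sp}$ is independent of the very ample class) gives $\phi_{sp}(\mathcal{E}|_Y)\leq\phi_{sp}(\mathcal{E})\leq1$; moreover $\ch_t(\mathcal{E}|_Y)=A\,H^{d-t}\ch_t(\mathcal{E})$ and $(H|_Y)^t=A\,H^d$. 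For $a\gg0$ the Mehta--Ramanathan restriction theorem \cite{MR} identifies the Harder--Narasimhan filtration of $\mathcal{E}|_Y$ with the restriction of that of $\mathcal{E}$, whence $\mu_{H|_Y}^+(\mathcal{E}|_Y)=A\,\mu_H^+(\mathcal{E})$. Feeding these identities into the already-proved top-degree case on $(Y,\mathcal{E}|_Y,H|_Y)$ and cancelling the common factor $A>0$ gives precisely $H^{d-t}\ch_t(\mathcal{E})\leq\max\{r(\mu_H^+(\mathcal{E}))^t/(t!(H^d)^{t-1}),0\}$.

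The main obstacle is the order of the two limits in the $t=d$ case. Asymptotic Riemann--Roch controls the Frobenius growth of $h^0-h^1$ only for a fixed characteristic $p$, whereas the sharp slope input $\mu_{H_q}^+(\mathcal{E}_q^{(p^n)})\approx p^n\mu_H^+(\mathcal{E})$ carries the error $\tfrac{r-1}{p-1}BH^{d-1}$ that vanishes only as $p\to\infty$; one must therefore send $n\to\infty$ first, to extract the Frobenius leading coefficient $\ch_d(\mathcal{E})$, and only afterwards send $p\to\infty$, to sharpen the slope, all the while using that Langer's binomial bound is monotone in the slope so that the upper estimate survives the limit. By comparison the restriction step is routine, but it genuinely requires high-degree complete intersections: a single hyperplane section may strictly raise $\mu^+$ (for instance $T_{\mathbb{P}^2}$ restricted to a line is $\mathcal{O}(2)\oplus\mathcal{O}(1)$), so Mehta--Ramanathan rather than mere Bertini is what keeps $\mu_{H|_Y}^+$ proportional to $\mu_H^+$.
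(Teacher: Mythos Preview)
Your proposal is correct and follows essentially the same route as the paper: reduce to very ample $H$, spread out, combine Corollary~\ref{cor4.13} with Theorems~\ref{Langer1} and~\ref{Langer2}, take $n\to\infty$ and then $p_q\to\infty$ to obtain the case $t=d$, and handle $t<d$ by restricting to a general complete intersection using Proposition~\ref{restriction} together with a restriction theorem for the Harder--Narasimhan filtration. The only cosmetic differences are that the paper performs the restriction step first and invokes Langer's effective restriction theorem \cite[Theorem~5.2]{Langer1} rather than Mehta--Ramanathan (note that the reference \cite{MR} in this paper is to a different article of Mehta--Ramanathan, not to their restriction theorem).
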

\begin{proof}
Since the conclusion is invariant if we replace $H$ by $mH$ for any $m\geq1$, we can assume that $H$ is sufficiently ample so that one can take $$H_1, \cdots H_{d-t}\in |H|$$ satisfying that $X_t:=H_1\cap\cdots\cap H_{d-t}$ is a smooth $t$-dimensional subvariety. By the restriction
theorem (e.g. \cite[Theorem 5.2]{Langer1}) and Proposition \ref{restriction}, one can also assume that $\mathcal{E}|_{X_t}$ is torsion free, $\mu_H^+(\mathcal{E}|_{X_t})=\mu_H^+(\mathcal{E})$ and $\phi_{sp}(\mathcal{E}|_{X_t})\leq1$.

Without loss of generality, we may assume that $t=d$, and thus $X_t=X$. Let $(\widetilde{X}\rightarrow\Spec A, \widetilde{\mathcal{E}}, \widetilde{H})$ be a thickening of $(X, \mathcal{E}, H)$. We freely use the notations in Section \ref{S3}. One can shrink $\Spec A$ so that $H_q$ is very ample for any $q\in\Spec A$. By the openness of semistability, one sees that $\mu_{H_q}^+(\mathcal{E}_q)=\mu_H^+(\mathcal{E})$ for a general $q\in \Spec A$. Let $\chr(A/q)=p_q$, by Corollary \ref{cor4.13}, one has
\begin{equation}\label{5.1}
\ch_d (\mathcal{E}_q)p_q^{nd}+O(p_q^{n(d-1)})\leq h^0(X_q, \mathcal{E}_q^{(p_q^n)}).
\end{equation}
Let $B$ be a nef divisor on $X$ such that $T_X(B)$ is globally generated. Then $T_{X_q}(B_q)$ is aslo globally generated for a general $q\in \Spec A$. From Theorem \ref{Langer1} and \ref{Langer2} it follows that
\begin{eqnarray*}
h^0(X_q, \mathcal{E}_q^{(p_q^n)})&\leq&\max\left\{
rH_q^d\binom{\mu_{H_q}^+(\mathcal{E}_q^{(p_q^n)})/H_q^d+d+f(r)}{d},0\right\}\\
&\leq&\max\left\{rH_q^d\binom{p_q^n\Big(\mu_{H_q}^+(\mathcal{E}_q)+\frac{r-1}{p_q-1}B_qH_q^{d-1}\Big)/H_q^d+d+f(r)}{d},0\right\}\\
&\leq&\max\left\{\frac{rp_q^{nd}}{d!(H_q^d)^{d-1}}\Big(\mu_{H_q}^+(\mathcal{E}_q)+\frac{r-1}{p_q-1}B_qH_q^{d-1}\Big)^d+O(p_q^{n(d-1)}),0\right\}
\end{eqnarray*}
This and (\ref{5.1}) imply that
$$\ch_d (\mathcal{E}_q)p_q^{nd}\leq\max\left\{\frac{rp_q^{nd}}{d!(H_q^d)^{d-1}}\Big(\mu_{H_q}^+(\mathcal{E}_q)+\frac{r-1}{p_q-1}B_qH_q^{d-1}\Big)^d,
0\right\}+O(p_q^{n(d-1)}).$$
Taking $n\rightarrow+\infty$, one infers
\begin{equation}\label{5.2}
\ch_d (\mathcal{E}_q)\leq\max\left\{\frac{r}{d!(H_q^d)^{d-1}}\Big(\mu_{H_q}^+(\mathcal{E}_q)+\frac{r-1}{p_q-1}B_qH_q^{d-1}\Big)^d,
0\right\}
\end{equation}
The desired inequality follows as $p_q$ approaches infinity.
\end{proof}

By the proof above, we can obtain some similar inequalities in positive characteristic.
\begin{theorem}\label{Bog3}
Assume that $p>0$. Let $B$ be a nef divisor on $X$ such that $T_X(B)$ is globally generated. Let $\mathcal{E}$ be a rank $r$ torsion free sheaf on $X$ with $\phi_{sp}(\mathcal{E})\leq1$. Then we have
$$\ch_d(\mathcal{E})\leq\max\left\{\frac{r\Big(\mu_H^+(\mathcal{E})+\frac{r-1}{p-1}BH^{d-1}\Big)^d}{d!(H^d)^{d-1}}, 0\right\}.$$ Moreover if $\mathcal{E}$ is strongly $\mu_H$-semistable, then $$\ch_d(\mathcal{E})\leq\max\left\{\frac{r(\mu_H(\mathcal{E}))^d}{d!(H^d)^{d-1}}, 0\right\}.$$
\end{theorem}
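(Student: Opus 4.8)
The plan is to run the argument of Theorem \ref{Thm5.1} directly in characteristic $p$, now dispensing with the arithmetic thickening since we already work over a field of positive characteristic. First I would invoke the asymptotic Riemann--Roch theorem of Corollary \ref{cor4.13}: because $\phi_{sp}(\mathcal{E})\leq1$, one has
$$h^0(X, \mathcal{E}^{(p^n)})-h^1(X, \mathcal{E}^{(p^n)})=\ch_d(\mathcal{E})p^{nd}+O(p^{n(d-1)}),$$
and in particular $\ch_d(\mathcal{E})p^{nd}+O(p^{n(d-1)})\leq h^0(X, \mathcal{E}^{(p^n)})$, since $h^1\geq0$.

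Next I would bound the right-hand side from above using Langer's estimation theorem (Theorem \ref{Langer1}) applied to the torsion free sheaf $\mathcal{E}^{(p^n)}$, together with the slope bound for Frobenius pull backs (Theorem \ref{Langer2}). As $B$ is nef with $T_X(B)$ globally generated, Theorem \ref{Langer2} gives $\mu_H^+(\mathcal{E}^{(p^n)})\leq p^n\big(\mu_H^+(\mathcal{E})+\frac{r-1}{p-1}BH^{d-1}\big)$. Feeding this into Theorem \ref{Langer1} and expanding the binomial coefficient $\binom{x/H^d+d+f(r)}{d}=\frac{1}{d!}(x/H^d)^d+O(x^{d-1})$ for large $x$, one obtains
$$h^0(X, \mathcal{E}^{(p^n)})\leq\max\left\{\frac{rp^{nd}}{d!(H^d)^{d-1}}\Big(\mu_H^+(\mathcal{E})+\frac{r-1}{p-1}BH^{d-1}\Big)^d+O(p^{n(d-1)}), 0\right\}.$$
Combining the two estimates, dividing through by $p^{nd}$, and letting $n\to\infty$ kills the error terms and yields the first inequality.

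For the moreover part, I would use that strong $\mu_H$-semistability makes every Frobenius pull back $\mathcal{E}^{(p^n)}$ itself $\mu_H$-semistable, so that $\mu_H^+(\mathcal{E}^{(p^n)})=\mu_H(\mathcal{E}^{(p^n)})=p^n\mu_H(\mathcal{E})$, using that the absolute Frobenius scales $\ch_1$ by $p^n$. This removes the need for the correction term $\frac{r-1}{p-1}BH^{d-1}$ altogether: substituting $p^n\mu_H(\mathcal{E})$ directly for $\mu_H^+(\mathcal{E}^{(p^n)})$ in Langer's estimate and repeating the limit argument produces the sharper bound. The only genuinely delicate point is controlling the error terms uniformly in $n$ when expanding the binomial coefficient; but since the leading term is exactly of order $p^{nd}$ and every lower-order contribution is $O(p^{n(d-1)})$, the passage to the limit cleanly isolates the top Chern character and the main estimates are entirely parallel to those already carried out for Theorem \ref{Thm5.1}.
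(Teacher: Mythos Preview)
Your proposal is correct and follows essentially the same approach as the paper. The only omission is that you do not first reduce to the case where $H$ is very ample (needed to invoke Theorem \ref{Langer1}); the paper notes that both inequalities are invariant under replacing $H$ by $mH$, so this is a harmless preliminary step. For the ``moreover'' part your argument differs slightly in packaging: you feed $\mu_H^+(\mathcal{E}^{(p^n)})=p^n\mu_H(\mathcal{E})$ directly into Langer's estimate and repeat the limit, whereas the paper applies the already-established first inequality to $\mathcal{E}^{(p^n)}$ (so the correction term $\frac{r-1}{p-1}BH^{d-1}$ survives but is dominated by $p^n\mu_H(\mathcal{E})$) and then lets $n\to\infty$. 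Both routes are equivalent and equally short.
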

\begin{proof}
One notices that these two inequalities are also invariant if we replace $H$ by $mH$ for any $m\geq1$, we can assume that $H$ is very ample.
The proof of the first inequality is the same as that of (\ref{5.2}). When $\mathcal{E}$ is strongly semistable, applying the first inequality to $\mathcal{E}^{(p^n)}$, one gets
$$p^{nd}\ch_d(\mathcal{E})\leq\max\left\{\frac{r\Big(p^{n}\mu_H(\mathcal{E})+\frac{r-1}{p-1}BH^{d-1}\Big)^d}{d!(H^d)^{d-1}}, 0\right\}.$$
Taking $n\rightarrow+\infty$, one obtains the second inequality.
\end{proof}

\begin{proof}[Proof of Corollary \ref{Bog4}]
By Proposition \ref{est-CM}, one sees that $\phi_{sp}(\mathcal{E}(mH))\leq1$ for $m\gg0$. From Theorem \ref{Thm5.1} and \ref{Bog3}, it follows that $$d!(H^dr)^{d-1}\ch_d(\mathcal{E}(mH))\leq \Big(H^{d-1}\ch_1(\mathcal{E}(mH))\Big)^d.$$ Expanding it, one obtains
$$\sum_{i=2}^d\binom{d}{i}(H^dr)^{d-i}\Big((H^{d-1}\ch_1(\mathcal{E}))^i-i!(H^dr)^{i-1}H^{d-i}\ch_i(\mathcal{E})\Big)m^{d-i}\geq0$$ for $m\gg0$.
This implies the desired conclusions.
\end{proof}

Now we show Theorem \ref{Bog2}. Let $(A, \Theta)$ be a polarized complex abelian variety of dimension $d$. Consider $\widehat{A}:= \Pic^0(A)$ and $\mathcal{P}$ a Poincar\'e line bundle on $A\times\widehat{A}$. This gives as usual two Fourier-Mukai functors
\begin{eqnarray*}
% \nonumber % Remove numbering (before each equation)
 & R\Phi_{\mathcal{P}}: D^b(A)\rightarrow D^b(\widehat{A}),& \mathcal{F}\mapsto Rp_{\widehat{A}*}(p_A^*(\mathcal{F})\otimes\mathcal{P}); \\
 & R\Psi_{\mathcal{P}}: D^b(\widehat{A})\rightarrow D^b(A),& \mathcal{G}\mapsto Rp_{A*}(p_{\widehat{A}}^*(\mathcal{G})\otimes\mathcal{P}).
\end{eqnarray*}
For $b\in \mathbb{Z}$, $\underline{b}: A\rightarrow A$, $z\mapsto bz$ denotes the multiplication homomorphism.

We review the notion of $GV$-object introduced by Pareschi and Popa in \cite{PP}.
\begin{definition}
For any integer $m\geq0$, an object $\mathcal{F}\in D^b(A)$ is called a $GV_{-m}$ object if $$\codim \Supp(R^i\Phi_{\mathcal{P}}\mathcal{F})\geq i-m$$ for all $i\geq0$. The $i$-th cohomological support locus of $\mathcal{F}$ is
$$V^i(\mathcal{F}):=\{\alpha\in\widehat{A}~|~h^i(A, \mathcal{F}\otimes\alpha)>0\}.$$ Define $h_{gen}^i(A, \mathcal{F})$ as the dimension of hypercohomology $H^i(A, \mathcal{F}\otimes\alpha)$ for $\alpha$ general in $\widehat{A}$.
\end{definition}

\begin{lemma}\label{lemma5.4}
The following conditions are equivalent:
\begin{enumerate}
  \item $\mathcal{F}$ is a $GV_{-m}$ object.
  \item $\codim V^i(\mathcal{F})\geq i-m$ for all $i$.
\end{enumerate}
\end{lemma}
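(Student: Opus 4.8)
The plan is to use base change to turn the two conditions into statements about a single complex $M:=R\Phi_{\mathcal{P}}\mathcal{F}$ on $\widehat{A}$, after which the equivalence becomes a general fact of homological algebra comparing the supports of the cohomology sheaves $\mathcal{H}^i(M)=R^i\Phi_{\mathcal{P}}\mathcal{F}$ with the loci where the derived restrictions $Li_\alpha^* M$ have nonvanishing cohomology.

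First I would record the base change identity. Writing $i_\alpha\colon\{\alpha\}\hookrightarrow\widehat{A}$ for the inclusion of a point, flat base change for the smooth proper projection $p_{\widehat{A}}$ applied to the defining fibre square yields $Li_\alpha^* M\cong R\Gamma(A,\mathcal{F}\otimes\alpha)$, using that $\mathcal{P}$ is a line bundle and hence flat over $\widehat{A}$. Thus $H^i(Li_\alpha^* M)\cong \mathbb{H}^i(A,\mathcal{F}\otimes\alpha)$, and setting $W^i(M):=\{\alpha:H^i(Li_\alpha^* M)\neq0\}$ we obtain $W^i(M)=V^i(\mathcal{F})$. Since also $\Supp R^i\Phi_{\mathcal{P}}\mathcal{F}=\Supp\mathcal{H}^i(M)$, the lemma is reduced to the equivalence, for the complex $M$ alone, of the bounds $\codim\Supp\mathcal{H}^i(M)\geq i-m$ for all $i$ and $\codim W^i(M)\geq i-m$ for all $i$.

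Both directions run through the spectral sequence computing the cohomology of $Li_\alpha^* M$ from the cohomology sheaves of $M$,
\[
E_2^{-s,q}=\mathcal{T}or_s^{\mathcal{O}_{\widehat{A}}}\!\bigl(\mathcal{H}^q(M),k(\alpha)\bigr)\Longrightarrow H^{q-s}(Li_\alpha^* M),\qquad s\geq0 .
\]
For $(1)\Rightarrow(2)$, any nonzero $H^i(Li_\alpha^* M)$ forces some $E_2^{-s,i+s}\neq0$ with $s\geq0$, whence $W^i(M)\subseteq\bigcup_{s\geq0}\Supp\mathcal{H}^{i+s}(M)$; as each support here has codimension $\geq(i+s)-m\geq i-m$, the bound on $W^i(M)$ follows immediately. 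For $(2)\Rightarrow(1)$ I would use descending induction on $i$. The term $E_2^{0,i}=\mathcal{H}^i(M)\otimes k(\alpha)$ admits no outgoing differentials, and its incoming differentials $d_r$ ($r\geq2$) come from $E_r^{-r,i+r-1}$, which is supported inside $\bigcup_{j>i}\Supp\mathcal{H}^j(M)$; hence for $\alpha$ outside this union the term $E_2^{0,i}$ survives, giving
\[
\Supp\mathcal{H}^i(M)\subseteq W^i(M)\cup\bigcup_{j>i}\Supp\mathcal{H}^j(M).
\]
For the top nonzero sheaf $\mathcal{H}^{i_0}(M)$ the union on the right is empty, so $\Supp\mathcal{H}^{i_0}(M)\subseteq W^{i_0}(M)$ starts the induction; and in general, since $\codim W^i(M)\geq i-m$ by hypothesis while $\codim\Supp\mathcal{H}^j(M)\geq j-m\geq i-m$ for $j>i$ by induction, the displayed inclusion forces $\codim\Supp\mathcal{H}^i(M)\geq i-m$.

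The step I expect to be the main obstacle is the differential bookkeeping underlying $(2)\Rightarrow(1)$: one must check that the bottom term $E_2^{0,i}$ can fail to survive only through higher $\mathcal{T}or$'s of strictly higher cohomology sheaves, so that the inductive codimension estimates propagate without loss. By contrast the base change identity and the inclusion used for $(1)\Rightarrow(2)$ are routine.
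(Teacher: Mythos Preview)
Your argument is correct. The paper does not give its own proof here: it simply defers to \cite[Lemma 3.6]{PP}, and what you have written is essentially the standard proof found there, namely derived base change identifying $V^i(\mathcal{F})$ with the fiberwise cohomology jump loci of $M=R\Phi_{\mathcal{P}}\mathcal{F}$, followed by the hyper-Tor spectral sequence $E_2^{-s,q}=\mathcal{T}or_s(\mathcal{H}^q(M),k(\alpha))\Rightarrow H^{q-s}(Li_\alpha^*M)$ and a descending induction on $i$. The point you flagged as the potential obstacle---that the only differentials hitting $E_2^{0,i}$ originate in Tor's of strictly higher cohomology sheaves---is exactly right, since $d_r$ lands in $E_r^{0,i}$ from $E_r^{-r,i+r-1}$ with $r\ge 2$, and this is what makes the induction go through.
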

\begin{proof}
See \cite[Lemma 3.6]{PP}.
\end{proof}

\begin{proof}[Proof of Theorem \ref{Bog2}]
As the proof of Theorem \ref{Thm5.1}, we can assume that $\Theta$ is very ample.
Since $\mathcal{E}$ is a $GV_{-1}$ sheaf, by Lemma \ref{lemma5.4} one deduces that $h_{gen}^i(A, \mathcal{E})=0$ for $i\geq2$. Thus for any $b\in\mathbb{Z}$ and $i\geq2$, we have $$h_{gen}^i(A, \underline{b}^*\mathcal{E})=\sum_{\alpha\in \widehat{\underline{b}}^{-1}(\widehat{e})}h_{gen}^i(A, \mathcal{E}\otimes\alpha)=0,$$ where $\widehat{e}$ is the identity point of $\widehat{A}$ and $\widehat{\underline{b}}$ is the dual isogeny.
From Riemann-Roch theorem, it follows that $$h_{gen}^0(A, \underline{b}^*\mathcal{E})\geq\chi(\underline{b}^*\mathcal{E})=\ch_d(\underline{b}^*\mathcal{E})=b^{2d}\ch_d(\mathcal{E}).$$
On the other hand, Theorem \ref{Langer1} gives
\begin{eqnarray*}
h_{gen}^0(A, \underline{b}^*\mathcal{E})&\leq&\max\left\{
r{\Theta}^d\binom{\mu_{\Theta}^+(\underline{b}^*\mathcal{E})/{\Theta}^d+d+f(r)}{d},0\right\}\\
   &=& \max\left\{
r{\Theta}^d\binom{b^2\mu_{\Theta}^+(\mathcal{E})/{\Theta}^d+d+f(r)}{d},0\right\}\\
&\leq&\max\left\{\frac{rb^{2d}}{d!({\Theta}^d)^{d-1}}(\mu_{\Theta}^+(\mathcal{E}))^d,0\right\}+O(b^{2d-2}).
\end{eqnarray*}
Hence one obtains
$$b^{2d}\ch_d(\mathcal{E})\leq\max\left\{\frac{rb^{2d}}{d!({\Theta}^d)^{d-1}}(\mu_{\Theta}^+(\mathcal{E}))^d,0\right\}+O(b^{2d-2}).$$
Taking $b\rightarrow+\infty$, we conclude that $$\ch_d(\mathcal{E})\leq\max\left\{\frac{r(\mu_{\Theta}^+(\mathcal{E}))^d}{d!({\Theta}^d)^{d-1}},0\right\}.$$
\end{proof}

\section{Chern classes of varieties}\label{S6}
In this section we exhibit some applications of Theorem \ref{Bog1}
to the Chern classes of threefolds and varieties of small codimension in projective spaces and abelian varieties.

Let $X$ be a smooth variety of dimension $d$ in projective space $\mathbb{P}^{d+e}$ over complex number field.
The cotangent bundle of $X$ is determined by the Euler sequence of $\mathbb{P}^{d+e}$ restricted to $X$
\begin{equation}\label{Euler}
0\rightarrow\Omega^1_{\mathbb{P}^{d+e}}|_X\rightarrow(\mathcal{O}_X(-1))^{\oplus(d+e+1)}\rightarrow\mathcal{O}_X\rightarrow0
\end{equation}
and the conormal bundle exact sequence
\begin{equation}\label{6.2}
0\rightarrow\mathcal{N}^{*}\rightarrow\Omega^1_{\mathbb{P}^{d+e}}|_X\rightarrow\Omega^1_{X}\rightarrow0.
\end{equation}
From these two exact sequences, it follows that $\mathcal{N}(-1)$ is generated by global sections. So is $$\wedge^{e-1}(\mathcal{N}(-1))\cong(\wedge^{e-1}\mathcal{N})(1-e).$$ Hence by Proposition \ref{gg}, one infers that
$$
\phi_{sp}((\wedge^{e-1}\mathcal{N})(1-e))\leq\min\{\rank(\wedge^{e-1}\mathcal{N})-1, d-1\}=\min\{e-1, d-1\}.
$$
Since $$\wedge^{e-1}\mathcal{N}\cong\mathcal{N}^{*}\otimes\det\mathcal{N}\cong\mathcal{N}^{*}\otimes\omega_X(d+e+1),$$
one obtains $$\phi_{sp}(\mathcal{N}^{*}\otimes\omega_X(d+2))\leq \min\{e-1, d-1\}.$$ Twisting (\ref{6.2}) by $\omega_X(d+2)$ yields an exact sequence
$$0\rightarrow\mathcal{N}^{*}\otimes\omega_X(d+2)\rightarrow\Omega^1_{\mathbb{P}^{d+e}}|_X\otimes\omega_X(d+2)
\rightarrow\Omega^1_{X}\otimes\omega_X(d+2)\rightarrow0.$$
On the other hand, by (\ref{Euler}) and Theorem \ref{Thm4.10}, one deduces that $$\phi_{sp}(\Omega^1_{\mathbb{P}^{d+e}}(1)|_X\otimes\mathcal{L})\leq1$$ for any F-semipositive line bundle $\mathcal{L}$ on $X$. Therefore, by Theorem \ref{Thm4.10} again, one sees that
\begin{equation}\label{6.3}
\phi_{sp}(\Omega^1_{X}\otimes\omega_X(d+2))\leq 1
\end{equation}
if $\omega_X(d+1)$ is F-semipositive and $1\leq\min\{e, d\}\leq3$.  Hence Theorem \ref{Thm5.1} gives:
\begin{theorem}\label{Chern1}
Assume that $1\leq\min\{e, d\}\leq3$. Let $H_X$ be the restriction of the hyperplane of $\mathbb{P}^{d+e}$ to $X$. Let $t$ be an integer satisfying $1\leq t\leq d$ and $L$ an ample divisor on $X$.
If $\omega_X(d+1)$ is F-semipositive, then
$$L^{d-t}\ch_t\Big(\Omega_X^1\otimes\omega_X(d+2)\Big)\leq
\max\Big\{\frac{d\big(\mu_L^+(\Omega_X^1\otimes\omega_X(d+2))\big)^t}{t!(L^d)^{t-1}},0\Big\}.$$
\end{theorem}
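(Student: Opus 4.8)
The plan is to obtain the theorem as a direct application of Theorem \ref{Thm5.1} to the sheaf $\mathcal{E}:=\Omega_X^1\otimes\omega_X(d+2)$. The key point is that Theorem \ref{Thm5.1} bounds $H^{d-t}\ch_t$ in terms of the maximal slope $\mu_H^+$ and requires \emph{no} semistability hypothesis; this is essential here, since $\Omega_X^1\otimes\omega_X(d+2)$ is not expected to be $\mu_L$-semistable in general. All of the geometric input needed to apply the theorem has already been assembled in the estimate (\ref{6.3}), so the remaining task is bookkeeping.

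First I would record the structure of $\mathcal{E}$: as the tensor product of the locally free cotangent bundle $\Omega_X^1$ with the invertible sheaf $\omega_X(d+2)$, it is locally free, hence torsion free, of rank $r=\rank\Omega_X^1=d$. Since $X$ is defined over $\mathbb{C}$ we have $p=0$, so Theorem \ref{Thm5.1} is the applicable form.

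Next I would invoke the hypothesis. Under the standing assumptions $1\leq\min\{e,d\}\leq3$ and F-semipositivity of $\omega_X(d+1)$, the estimate (\ref{6.3}) gives $\phi_{sp}(\mathcal{E})\leq1$; by the independence of $\phi_{sp}$ from the chosen very ample polarization, this bound is intrinsic to $\mathcal{E}$ and imposes no restriction on which ample divisor we feed into Theorem \ref{Thm5.1}. Applying that theorem to the torsion free rank-$d$ sheaf $\mathcal{E}$ with the fixed ample divisor $H=L$, we obtain, for each $1\leq t\leq d$,
$$L^{d-t}\ch_t(\mathcal{E})\leq\max\left\{\frac{d\,(\mu_L^+(\mathcal{E}))^t}{t!(L^d)^{t-1}},0\right\},$$
which is precisely the claimed inequality after substituting $r=d$ and $\mathcal{E}=\Omega_X^1\otimes\omega_X(d+2)$.

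The substantive obstacle lies entirely before the statement, in the derivation of (\ref{6.3}): one must verify $\phi_{sp}(\Omega_X^1\otimes\omega_X(d+2))\leq1$ via the global generation of $\wedge^{e-1}(\mathcal{N}(-1))$, the rank-versus-dimension bound of Proposition \ref{gg}, and the exactness behaviour of $\phi_{sp}$ recorded in Theorem \ref{Thm4.10} applied to the Euler and conormal sequences (\ref{Euler}) and (\ref{6.2}). Once (\ref{6.3}) is in hand, the passage to Theorem \ref{Chern1} is purely formal and presents no further difficulty.
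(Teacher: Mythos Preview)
Your proposal is correct and matches the paper's approach exactly: the paper derives (\ref{6.3}) in the discussion preceding the theorem and then simply writes ``Hence Theorem \ref{Thm5.1} gives'' before stating Theorem \ref{Chern1}, with no separate proof. Your observation that the substantive work lies entirely in establishing (\ref{6.3}) via Proposition \ref{gg} and Theorem \ref{Thm4.10} applied to the Euler and conormal sequences is precisely the paper's viewpoint.
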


It is well known that $\Omega^1_X$ is
$\mu_{K_X}$-stable when $\omega_X$ is ample and $\mu_{\mathcal{O}_X(1)}$-stable when $\omega_X$ is trivial (see \cite{En} for example). Thus we have the following corollary.
\begin{corollary}\label{Chern0}
Assume that $1\leq\min\{e, d\}\leq3$. Let $H_X$ be the restriction of the hyperplane of $\mathbb{P}^{d+e}$ to $X$. Let $t$ be an integer satisfying $1\leq t\leq d$.
If $\omega_X$ is ample, then
$$K_X^{d-t}\ch_t\Big(\Omega_X^1\otimes\omega_X(d+2)\Big)\leq\frac{\Big((d+1)K_X^d+(d^2+2d)K_X^{d-1}H_X\Big)^t}{t!(dK_X^d)^{t-1}}.$$
If $\omega_X$ is trivial, then
$$H_X^{d-t}\ch_t(\Omega_X^1(d+2))\leq\frac{(d^2+2d)^t}{t!d^{t-1}}H_X^d.$$
\end{corollary}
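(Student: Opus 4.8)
The plan is to deduce the corollary directly from Theorem \ref{Chern1} by choosing, in each of the two cases, an ample polarization $L$ adapted to the stability of $\Omega_X^1$. The hypothesis $1\leq\min\{e,d\}\leq3$ is exactly what Theorem \ref{Chern1} requires, so the only real work is to verify the F-semipositivity of $\omega_X(d+1)$, to use the quoted $\mu$-stability of $\Omega_X^1$ (from \cite{En}) in order to replace the maximal slope $\mu_L^+$ appearing on the right-hand side by an honest slope, and then to compute that slope by routine Chern character bookkeeping.

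First I would treat the case $\omega_X$ ample, taking $L=K_X$ (ample by hypothesis). To apply Theorem \ref{Chern1} I note that $\omega_X(d+1)$ is a tensor product of ample line bundles, hence ample, hence F-ample by Lemma \ref{lemma3.4} and therefore F-semipositive by Lemma \ref{lemma3.10}. Since $\Omega_X^1$ is $\mu_{K_X}$-stable, its twist $\Omega_X^1\otimes\omega_X(d+2)$ is $\mu_{K_X}$-semistable (tensoring by a line bundle shifts every slope by the same constant and so preserves semistability), whence $\mu_{K_X}^+(\Omega_X^1\otimes\omega_X(d+2))=\mu_{K_X}(\Omega_X^1\otimes\omega_X(d+2))$. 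Using $\ch_1(\Omega_X^1)=K_X$, $\rank\Omega_X^1=d$, and $c_1(\omega_X(d+2))=K_X+(d+2)H_X$, I get $\ch_1(\Omega_X^1\otimes\omega_X(d+2))=(d+1)K_X+(d^2+2d)H_X$, so
$$\mu_{K_X}(\Omega_X^1\otimes\omega_X(d+2))=\frac{(d+1)K_X^d+(d^2+2d)K_X^{d-1}H_X}{d}.$$
Substituting this into the bound of Theorem \ref{Chern1} and clearing the factor $d$ against $d^t$ reproduces the stated right-hand side; since $K_X$ and $H_X$ are ample, both $K_X^d>0$ and $K_X^{d-1}H_X>0$, so the slope term is positive and the maximum with $0$ is redundant.

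The case $\omega_X$ trivial is entirely parallel: I take $L=H_X$, replace $\mu_{K_X}$-stability of $\Omega_X^1$ by its $\mu_{\mathcal{O}_X(1)}$-stability, and note that $\omega_X(d+1)=\mathcal{O}_X(d+1)$ is again ample, hence F-semipositive. Now $K_X=0$ collapses the computation to $\ch_1(\Omega_X^1(d+2))=(d^2+2d)H_X$, giving $\mu_{H_X}(\Omega_X^1(d+2))=(d+2)H_X^d$; feeding this into Theorem \ref{Chern1} and using $(d^2+2d)^t=d^t(d+2)^t$ yields the claimed inequality, with the maximum again attained by the positive slope term.

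There is no genuine obstacle in this corollary: all of the analytic content is carried by Theorem \ref{Chern1}, and the proof reduces to two pieces of bookkeeping, namely the formal identity $\mu^+=\mu$ coming from the stability of $\Omega_X^1$ together with its stability under line-bundle twists, and the sign check that the slope term is nonnegative so that the maximum in Theorem \ref{Chern1} simplifies. If anything deserves care, it is simply keeping the Chern character computation and the cancellation of the combinatorial factors straight.
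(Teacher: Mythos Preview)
Your proof is correct and follows exactly the approach the paper intends: the paper simply remarks that $\Omega_X^1$ is $\mu_{K_X}$-stable (resp.\ $\mu_{\mathcal{O}_X(1)}$-stable) when $\omega_X$ is ample (resp.\ trivial), citing \cite{En}, and then states the corollary as an immediate consequence of Theorem~\ref{Chern1}. Your write-up spells out the F-semipositivity check for $\omega_X(d+1)$, the identification $\mu_L^+=\mu_L$, the slope computation, and the removal of the $\max\{\,\cdot\,,0\}$, all of which are implicit in the paper's one-line deduction.
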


Applying the theorem to case that $d=t=3$, one obtains the following inequalities for threefolds.
\begin{corollary}\label{cor6.2}
Let $X$ be a complex projective smooth threefold. Let $H$ be a very ample divisor on $X$. If $K_X$ is ample, then $$\ch_3\Big(\Omega_X^1\otimes\omega_X(5H)\Big)\leq\frac{\Big(4K_X^3+15K_X^{2}H\Big)^3}{54(K_X^3)^2}.$$
If $K_X=0$, then one has
$$c_3(X)+10Hc_2(X)\geq0.$$
\end{corollary}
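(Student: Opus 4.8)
The plan is to deduce both inequalities directly from Corollary~\ref{Chern0} by specializing to $d=t=3$, with the threefold $X$ embedded in projective space by the complete linear system attached to the very ample divisor $H$; then $H_X=H$ and $\dim X=d=3$. First I would check that the hypotheses of Corollary~\ref{Chern0} hold. The codimension condition $1\leq\min\{e,d\}\leq3$ is satisfied because $\min\{e,3\}\leq3$ is automatic and $e\geq1$ follows from $X$ not being $\mathbb{P}^3$ (its canonical class is ample or trivial, not anti-ample). The F-semipositivity input $\phi_{sp}(\Omega_X^1\otimes\omega_X(5))\leq1$ used to derive Theorem~\ref{Chern1} requires $\omega_X(4)$ to be F-semipositive; in both cases $\omega_X(4)=\omega_X\otimes\mathcal{O}_X(4H)$ is ample (an ample plus a very ample divisor when $K_X$ is ample, and very ample when $K_X=0$), hence F-ample by Lemma~\ref{lemma3.4} and therefore F-semipositive by Lemma~\ref{lemma3.10}.

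For the case $K_X$ ample, I would substitute $d=t=3$ into the first inequality of Corollary~\ref{Chern0}. The numerator becomes $\big((d+1)K_X^d+(d^2+2d)K_X^{d-1}H\big)^t=(4K_X^3+15K_X^2H)^3$, while the denominator is $t!(dK_X^d)^{t-1}=6\cdot(3K_X^3)^2=54(K_X^3)^2$. Since $\omega_X(5)=\omega_X\otimes\mathcal{O}_X(5H)$, the left-hand side is $\ch_3\big(\Omega_X^1\otimes\omega_X(5H)\big)$, and the stated inequality drops out verbatim.

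For the case $K_X=0$, I would substitute $d=t=3$ into the second inequality of Corollary~\ref{Chern0}, giving $\ch_3(\Omega_X^1(5))\leq\frac{15^3}{54}H^3=\frac{125}{2}H^3$, and then translate the left-hand side into Chern classes of $X$. Writing $c_i=c_i(X)$ and using $c_i(\Omega_X^1)=(-1)^ic_i$ together with $c_1(X)=0$ (equivalent to $K_X=0$), I compute $\ch_0(\Omega_X^1)=3$, $\ch_1=0$, $\ch_2=-c_2$, $\ch_3=-\tfrac12 c_3$. Multiplying by $e^{5H}=1+5H+\tfrac{25}{2}H^2+\tfrac{125}{6}H^3+\cdots$ and reading off the degree-three term yields
\[
\ch_3(\Omega_X^1(5))=-\tfrac12 c_3-5Hc_2+3\cdot\tfrac{125}{6}H^3=-\tfrac12 c_3-5Hc_2+\tfrac{125}{2}H^3 .
\]
Plugging this into the inequality, the $\tfrac{125}{2}H^3$ terms cancel, leaving $-\tfrac12 c_3-5Hc_2\leq0$, i.e. $c_3(X)+10Hc_2(X)\geq0$.

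The content of the argument is entirely bookkeeping, so I do not expect a genuine obstacle; the only points demanding care are verifying that $\omega_X(4)$ is F-semipositive in each case (so that Corollary~\ref{Chern0} applies) and tracking the coefficients in the $K_X=0$ computation so that the $H^3$ terms cancel exactly. A useful sanity check is that the cancellation of the $H^3$ term is forced: the top-degree contribution of $\ch_3$ for a twist of a rank-three bundle is $3\cdot\tfrac{125}{6}=\tfrac{125}{2}$, which is precisely the constant $\tfrac{15^3}{54}$ on the right, so any arithmetic slip would be immediately visible.
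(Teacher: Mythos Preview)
Your proposal is correct and follows essentially the same route as the paper: both cases are obtained by specializing Corollary~\ref{Chern0} (equivalently Theorem~\ref{Chern1}) to $d=t=3$ with $H_X=H$, and then, for $K_X=0$, expanding $\ch_3(\Omega_X^1(5H))$ in terms of $c_2(X)$ and $c_3(X)$ so that the $\tfrac{125}{2}H^3$ terms cancel. Your extra verification that $e\geq1$ and that $\omega_X(4)$ is F-semipositive is sound but already absorbed into the hypotheses of Corollary~\ref{Chern0}.
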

\begin{proof}
It is obvious that the first inequality follows from Corollary \ref{Chern0}.

We now assume that $K_X=0$. Hence Theorem \ref{Chern1} gives $$\ch_3(\Omega_X^1(5H))\leq\frac{15^3}{54}H^3.$$
This implies $$\ch_3(\Omega_X^1)+5H\ch_2(\Omega_X^1)\leq0.$$
Since $$\ch_3(\Omega_X^1)=-\ch_3(T_X)=-\frac{1}{6}(c_1^3(X)-3c_1(X)c_2(X)+3c_3(X))=-\frac{1}{2}c_3(X)$$ and
$$\ch_2(\Omega_X^1)=\ch_2(T_X)=-c_2(X),$$ the conclusion follows.
\end{proof}

Theorem \ref{Chern1} can be applied to Fano threefolds.
\begin{corollary}\label{cor-Fano}
Let $X$ be a complex smooth Fano threefold and $l_X$ the Fano index of $X$. Assume that $-K_X\sim l_XL$ for an ample divisor $L$ in $X$. Then one has
$$(\frac{13}{2}l_X-25)Lc_2(X)-\frac{1}{2}c_3(X)\leq(\frac{127}{6}l_X^3-\frac{325}{2}l_X^2+\frac{625}{2}l_X)L^3.$$ If $\Omega_X^1$ is $\mu_L$-semistable and $l_X\leq3$, we have
$$(\frac{13}{2}l_X-25)Lc_2(X)-\frac{1}{2}c_3(X)\leq(\frac{58}{27}l_X^3-\frac{25}{3}l_X^2)L^3.$$
\end{corollary}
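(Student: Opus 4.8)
The plan is to apply Theorem \ref{Chern1} to $X$ with $t=d=3$, keeping $L$ as the polarization in the bound but embedding $X$ by the very ample divisor $\mathcal{O}_X(1)=\mathcal{O}_X((5-l_X)L)$, so that $H_X=(5-l_X)L$. With this choice $\omega_X(d+1)=\omega_X(4)=\mathcal{O}_X(K_X+4(5-l_X)L)=\mathcal{O}_X(5(4-l_X)L)$, which is ample for $l_X<4$ (and trivial for $l_X=4$); hence it is F-semipositive by Lemmas \ref{lemma3.4} and \ref{lemma3.10}. Together with $1\le\min\{e,3\}\le 3$ (codimension at least one), the derivation of (\ref{6.3}) gives $\phi_{sp}(\Omega_X^1\otimes\omega_X(5))\le 1$, so Theorem \ref{Chern1} yields
$$\ch_3\big(\Omega_X^1\otimes\omega_X(5)\big)\le\max\Big\{\frac{\big(\mu_L^+(\Omega_X^1\otimes\omega_X(5))\big)^3}{2(L^3)^2},\,0\Big\}.$$
First I would record that $(5-l_X)L$ is very ample with $e\ge 1$ for $l_X\le 3$; the borderline index $l_X=4$ (the case $X=\mathbb{P}^3$) must be treated separately, which is why the sharper semistable statement is restricted to $l_X\le 3$.

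Next I would expand the left-hand side by the splitting principle. Writing $c_1(\Omega_X^1)=K_X$, $c_2(\Omega_X^1)=c_2(X)$, $c_3(\Omega_X^1)=-c_3(X)$ and $D:=c_1(\omega_X(5))=K_X+5H_X$, one has $\ch_3(\Omega_X^1\otimes\omega_X(5))=\ch_3(\Omega_X^1)+\ch_2(\Omega_X^1)D+\tfrac12\ch_1(\Omega_X^1)D^2+\tfrac12 D^3$. Substituting $K_X=-l_XL$ and $H_X=(5-l_X)L$, so that $D=(25-6l_X)L$, the terms involving $c_2$ and $c_3$ collect into $(\tfrac{13}{2}l_X-25)Lc_2(X)-\tfrac12 c_3(X)$ and the rest into an explicit cubic $P(l_X)L^3$. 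The $c_3$-coefficient $-\tfrac12$ and the $Lc_2$-coefficient $\tfrac{13}{2}l_X-25$ come out exactly because the $c_2$-part equals $-c_2(X)\big(\tfrac12 K_X+D\big)$ and $\tfrac12 K_X+D=(25-\tfrac{13}{2}l_X)L$. This bookkeeping is mechanical but must be carried out carefully.

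The remaining ingredient is an upper bound for $\mu_L^+(\Omega_X^1\otimes\omega_X(5))=\mu_L^+(\Omega_X^1)+(25-6l_X)L^3$. For the general inequality I would prove $\mu_L^+(\Omega_X^1)\le 0$: a subsheaf $\mathcal{F}\subseteq\Omega_X^1$ of positive $L$-slope would produce a line subsheaf $\det\mathcal{F}\hookrightarrow\Omega_X^{s}$ (with $s=\rank\mathcal{F}$) of positive $L$-degree, hence of maximal Iitaka dimension in the Picard-rank-one situation, contradicting Bogomolov–Sommese vanishing together with the fact that $\Omega_X^3=\mathcal{O}_X(K_X)$ is anti-ample; this is where the Fano hypothesis enters. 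Then $\mu_L^+\le(25-6l_X)L^3$, and since $(25-6l_X)L^3>0$ for $l_X\le 4$, in either branch of the maximum we obtain $\ch_3(\Omega_X^1\otimes\omega_X(5))\le\tfrac12(25-6l_X)^3L^3$. In the semistable case $\mu_L^+(\Omega_X^1)=\mu_L(\Omega_X^1)=\tfrac13 L^2K_X=-\tfrac{l_X}{3}L^3$, so $\mu_L^+=(25-\tfrac{19}{3}l_X)L^3$, which is positive precisely when $l_X\le 3$, giving $\ch_3\le\tfrac12(25-\tfrac{19}{3}l_X)^3L^3$.

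Finally I would move $P(l_X)L^3$ to the right in each inequality. The identities $\tfrac12(25-6l_X)^3-P(l_X)=\tfrac{127}{6}l_X^3-\tfrac{325}{2}l_X^2+\tfrac{625}{2}l_X$ and $\tfrac12(25-\tfrac{19}{3}l_X)^3-P(l_X)=\tfrac{58}{27}l_X^3-\tfrac{25}{3}l_X^2$ then produce the two stated bounds. I expect the main obstacle to be the slope estimate $\mu_L^+(\Omega_X^1)\le 0$ in the general case — pinning down exactly which positivity of the Fano threefold forces it (and handling Picard rank greater than one) — rather than the lengthy but routine Chern-class and cubic algebra.
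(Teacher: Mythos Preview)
Your approach is essentially the paper's: embed $X$ via $H_X=(5-l_X)L$, check that $\omega_X(4H_X)$ is F-semipositive, apply Theorem~\ref{Chern1} with $t=d=3$ and the polarization $L$, then expand $\ch_3(\Omega_X^1\otimes\omega_X(5H_X))$ and rearrange. Your Chern-class bookkeeping is correct and matches the paper's intermediate step $\ch_3(\Omega_X^1)+(25-6l_X)L\,\ch_2(\Omega_X^1)\le\frac{l_X}{2}(25-6l_X)^2L^3$, and the two cubic identities you quote are right.

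Two points of divergence. First, the very ampleness of $(5-l_X)L$ holds for all $1\le l_X\le 4$ (the paper invokes \cite{Lee}), and $\omega_X(4H_X)=\mathcal{O}_X((20-5l_X)L)$ is F-semipositive in every case (trivial when $l_X=4$), so there is no need to set $l_X=4$ aside; the first inequality is proved uniformly. Second, and more substantively, the slope bound $\mu_L^+(\Omega_X^1)\le 0$ is exactly where your proposal is incomplete. Your Bogomolov--Sommese argument only converts positive $L$-degree into bigness of $\det\mathcal{F}$ when the Picard rank is one; for general Fano threefolds this step fails as you suspected. The paper bypasses this by citing \cite[Theorem~1.3]{Peter} (generic nefness of the tangent bundle of Fano manifolds), which gives the strict inequality $\mu_L^+(\Omega_X^1)<0$ directly and with no restriction on Picard rank. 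Replacing your Bogomolov--Sommese step by this citation closes the gap and makes your argument coincide with the paper's.
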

\begin{proof}
It is well known that $1\leq l_X\leq 4$, and $l_X=4$ if and only if $X$ is isomorphic to $\mathbb{P}^3$. By \cite{Lee}, one sees that $$H_X:=K_X+5L=(5-l_X)L$$ is very ample. Since  $$\omega_X(4H_X)=\mathcal{O}_X((20-5l_X)L)$$ is F-semipositivity, Theorem \ref{Chern1} gives 
\begin{equation}\label{7}
\ch_3(\Omega_X^1((25-6l_X)L))\leq\max\Big\{\frac{\big(\mu_L^+(\Omega_X^1((25-6l_X)L))\big)^3}{2(L^3)^{2}},0\Big\}.
\end{equation}
By \cite[Theorem 1.3]{Peter}, one deduces that $\mu_L^+(\Omega_X^1)<0$. It follows that $$\ch_3(\Omega_X^1((25-6l_X)L))<\frac{(25-6l_X)^3L^3}{2}.$$ This implies 
$$\ch_3(\Omega_X^1)+(25-6l_X)L\ch_2(\Omega_X^1)<\frac{l_X(25-6l_X)^2}{2}L^3.$$
A straightforward calculation gives the first inequality.

If $\Omega_X^1$ is $\mu_L$-semistable and $l_X\leq3$, the inequality (\ref{7}) implies $$\ch_3(\Omega_X^1)+(25-6l_X)L\ch_2(\Omega_X^1)
\leq(\frac{25}{6}l_X^2-\frac{55}{54}l_X^3)L^3.$$
Expanding it, we get the second inequality.
\end{proof}

One can obtain a similar result for varieties in abelian varieties.
\begin{theorem}\label{Chern2}
Let $A$ be a complex abelian variety of dimension $d+e$ and $Y$ be a smooth subvariety in $A$ of dimension $d$. Assume that $1\leq\min\{e, d\}\leq3$, and let $t$ be an integer satisfying $1\leq t\leq d$.
If $\omega_Y$ is ample, then
$$K_Y^{d-t}\ch_t(\Omega_Y^1\otimes\omega_Y)\leq\frac{(d+1)^t}{t!d^{t-1}}K_Y^d.$$
\end{theorem}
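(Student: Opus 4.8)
The plan is to follow the argument used for varieties of small codimension in projective space (Theorem \ref{Chern1}), but to exploit the fact that an abelian variety is parallelizable: both $T_A$ and $\Omega^1_A$ are trivial. This is exactly what makes the abelian case cleaner, since the normal bundle becomes globally generated \emph{without} any Serre twist, and in particular no twist by $\mathcal{O}(d+2)$ is needed. Concretely, let $\mathcal{N}$ be the normal bundle of $Y$ in $A$, a rank-$e$ locally free sheaf, and consider the normal and conormal sequences
$$0\rightarrow T_Y\rightarrow T_A|_Y\rightarrow\mathcal{N}\rightarrow0, \qquad 0\rightarrow\mathcal{N}^{*}\rightarrow\Omega^1_A|_Y\rightarrow\Omega^1_Y\rightarrow0.$$
Since $T_A\cong\mathcal{O}_A^{\oplus(d+e)}$, the first sequence presents $\mathcal{N}$ as a quotient of $\mathcal{O}_Y^{\oplus(d+e)}$, so $\mathcal{N}$ is globally generated; moreover $\det\mathcal{N}\cong(\det T_Y)^{-1}\cong\omega_Y$, and $\Omega^1_A|_Y\cong\mathcal{O}_Y^{\oplus(d+e)}$.

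Next I would bound the F-semipositivity of the relevant sheaves. The bundle $\wedge^{e-1}\mathcal{N}$ is globally generated and locally free of rank $e$, so Proposition \ref{gg} (in characteristic zero) gives $\phi_{sp}(\wedge^{e-1}\mathcal{N})\leq\min\{e-1,d-1\}$. Using $\wedge^{e-1}\mathcal{N}\cong\mathcal{N}^{*}\otimes\det\mathcal{N}\cong\mathcal{N}^{*}\otimes\omega_Y$, this reads $\phi_{sp}(\mathcal{N}^{*}\otimes\omega_Y)\leq\min\{e-1,d-1\}$, and the hypothesis $\min\{e,d\}\leq3$ forces this to be $\leq2$. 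Twisting the conormal sequence by $\omega_Y$ yields
$$0\rightarrow\mathcal{N}^{*}\otimes\omega_Y\rightarrow\omega_Y^{\oplus(d+e)}\rightarrow\Omega^1_Y\otimes\omega_Y\rightarrow0.$$
Because $\omega_Y$ is ample it is F-ample by Lemma \ref{lemma3.4}, so $\phi_{sp}(\omega_Y)\leq\phi_a(\omega_Y)=0$ by Lemma \ref{lemma3.10}, and Theorem \ref{Thm4.10}(1) gives $\phi_{sp}(\omega_Y^{\oplus(d+e)})=0$. Applying Theorem \ref{Thm4.10}(2) to the displayed sequence with $l=1$ (the middle term $\omega_Y^{\oplus(d+e)}$ has $\phi_{sp}\leq 1$ and the subsheaf $\mathcal{N}^{*}\otimes\omega_Y$ has $\phi_{sp}\leq 2$) then yields $\phi_{sp}(\Omega^1_Y\otimes\omega_Y)\leq1$.

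With this in hand I would apply the main Bogomolov inequality. Take $H=K_Y$, which is ample. The sheaf $\Omega^1_Y\otimes\omega_Y$ is locally free (hence torsion free) of rank $d$ with $\phi_{sp}\leq1$, so Theorem \ref{Thm5.1} applies. Since $\omega_Y$ is ample, $\Omega^1_Y$ is $\mu_{K_Y}$-semistable (see \cite{En} and the discussion preceding Corollary \ref{Chern0}), hence so is $\Omega^1_Y\otimes\omega_Y$, so that
$$\mu_{K_Y}^+(\Omega^1_Y\otimes\omega_Y)=\mu_{K_Y}(\Omega^1_Y\otimes\omega_Y)=\frac{K_Y^{d-1}\big((d+1)K_Y\big)}{d}=\frac{(d+1)K_Y^d}{d},$$
using $c_1(\Omega^1_Y\otimes\omega_Y)=c_1(\Omega^1_Y)+d\,K_Y=(d+1)K_Y$. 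Substituting $r=d$ and this slope into Theorem \ref{Thm5.1} and simplifying the maximum, which is attained by the positive term because $K_Y^d>0$, produces exactly $\tfrac{(d+1)^t}{t!\,d^{t-1}}K_Y^d$, as claimed.

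The hard part is the semipositivity bookkeeping in the middle steps: the object of interest $\Omega^1_Y\otimes\omega_Y$ is the \emph{quotient} of the conormal sequence, so one cannot bound its $\phi_{sp}$ directly, but must instead control the conormal subsheaf and the trivial-type middle term and then transfer the bound to the quotient via the exact-sequence estimate of Theorem \ref{Thm4.10}(2). The numerical constraint $\min\{e,d\}\leq3$ enters precisely here: it is exactly what keeps $\phi_{sp}(\mathcal{N}^{*}\otimes\omega_Y)\leq2$, which is the hypothesis needed for the $l=1$ case to conclude $\phi_{sp}(\Omega^1_Y\otimes\omega_Y)\leq1$. Once this is secured, the remaining slope and Chern-character computation is routine.
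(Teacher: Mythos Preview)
Your proposal is correct and follows essentially the same route as the paper: the paper writes the conormal sequence $0\to\mathcal{M}^*\to\mathcal{O}_Y^{\oplus(d+e)}\to\Omega^1_Y\to0$ (your $\mathcal{M}$ is your $\mathcal{N}$), uses global generation of $\wedge^{e-1}\mathcal{M}\cong\mathcal{M}^*\otimes\omega_Y$ and Proposition~\ref{gg} to get $\phi_{sp}(\mathcal{M}^*\otimes\omega_Y)\leq 2$, twists by $\omega_Y$, invokes Theorem~\ref{Thm4.10} to conclude $\phi_{sp}(\Omega^1_Y\otimes\omega_Y)\leq 1$, and then applies Theorem~\ref{Bog1}. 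Your write-up is simply more explicit about which parts of Theorem~\ref{Thm4.10} are used, about the F-ampleness of $\omega_Y$, and about the semistability of $\Omega^1_Y$ needed to identify $\mu^+$ with $\mu$.
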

\begin{proof}
Since the tangent bundle of $A$ is trivial, one has the conormal bundle exact sequence
\begin{equation}\label{6.4}
0\rightarrow\mathcal{M}^{*}\rightarrow\mathcal{O}_Y^{\oplus(d+e)}\rightarrow\Omega^1_{Y}\rightarrow0.
\end{equation}
It turns out that $\mathcal{M}$ is generated by global sections. So is
$$\wedge^{e-1}\mathcal{M}\cong\mathcal{M}^{*}\otimes\det\mathcal{M}\cong\mathcal{M}^{*}\otimes\omega_Y.$$
By Proposition \ref{gg}, one sees that $$\phi_{sp}(\mathcal{M}^{*}\otimes\omega_Y)\leq \min\{e-1, d-1\}\leq2.$$
Tensoring (\ref{6.4}) by $\omega_Y$ yields an exact sequence
$$0\rightarrow\mathcal{M}^{*}\otimes\omega_Y\rightarrow\omega_Y^{\oplus(d+e)}\rightarrow\Omega^1_{Y}\otimes\omega_Y\rightarrow0.$$
By Theorem \ref{Thm4.10} one obtains $\phi_{sp}(\Omega^1_{Y}\otimes\omega_Y)\leq1$ if $\omega_X$ is ample.
Thus Theorem \ref{Bog1} gives the desired conclusions.
\end{proof}

From the proof of Theorem \ref{Chern2}, one can also obtain a more general result:
\begin{theorem}\label{Thm6.5}
Let $(X, H)$ be a polarized complex projective smooth threefold. Let $\mathcal{E}$ be a torsion free sheaf of rank $r$ on $X$. If $\det\mathcal{E}$ is ample or trivial and $\mathcal{E}$ is generically globally generated in dimension one, then one has $$\ch_3(\mathcal{E}\otimes\det\mathcal{E})\leq\frac{r(\mu_H^+(\mathcal{E}\otimes\det\mathcal{E}))^3}{6(H^3)^2}.$$
\end{theorem}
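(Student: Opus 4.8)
The plan is to reduce the theorem to the single F-semipositivity estimate $\phi_{sp}(\mathcal{E}\otimes\det\mathcal{E})\leq 1$ and then feed it into Theorem \ref{Thm5.1} with $d=t=3$. Write $\mathcal{E}_0\subseteq\mathcal{E}$ for the image of the evaluation map $H^0(\mathcal{E})\otimes\mathcal{O}_X\to\mathcal{E}$, so that $\mathcal{E}_0$ is globally generated and torsion free and $0\to\mathcal{E}_0\to\mathcal{E}\to Q\to0$ with $\dim Q\leq1$ by hypothesis. Since $\dim Q\leq d-2$ we get $\det Q\cong\mathcal{O}_X$, whence $\det\mathcal{E}_0\cong\det\mathcal{E}$. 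Granting the estimate, Theorem \ref{Thm5.1} gives $\ch_3(\mathcal{E}\otimes\det\mathcal{E})\leq\max\{r(\mu_H^+(\mathcal{E}\otimes\det\mathcal{E}))^3/6(H^3)^2,\,0\}$, and the maximum is the first term: since $\mathcal{E}_0$ is globally generated, $\mu_H^-(\mathcal{E}_0)\geq0$, so $\mu_H^+(\mathcal{E})\geq\mu_H^+(\mathcal{E}_0)\geq0$, and as $\det\mathcal{E}$ is ample or trivial, $\mu_H^+(\mathcal{E}\otimes\det\mathcal{E})=\mu_H^+(\mathcal{E})+\mu_H(\det\mathcal{E})\geq0$. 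Thus everything rests on the F-semipositivity estimate.

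I will use repeatedly the following observation, immediate from Proposition \ref{gg2} and Theorem \ref{Thm4.10}(1): any coherent sheaf $\mathcal{G}$ on the smooth threefold $X$ that is generically globally generated in dimension one satisfies $\phi_{sp}(\mathcal{G})\leq2$. Indeed the globally generated image of its evaluation map has $\phi_{sp}\leq d-1=2$ by Proposition \ref{gg2}, the remaining quotient is supported in dimension $\leq1$ and so has $\phi_{sp}\leq1$, and Theorem \ref{Thm4.10}(1) combines them.

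The heart of the argument, modelled on the proof of Theorem \ref{Chern2}, is the kernel bound. Set $0\to\mathcal{S}\to H^0(\mathcal{E})\otimes\mathcal{O}_X\to\mathcal{E}_0\to0$; as the kernel of a map from a free sheaf onto a torsion free sheaf, $\mathcal{S}$ is reflexive, of some rank $s$, with $\det\mathcal{S}\cong(\det\mathcal{E})^{-1}$. I claim $\phi_{sp}(\mathcal{S}\otimes\det\mathcal{E})\leq2$. Dualizing the defining sequence yields $0\to\mathcal{E}_0^{*}\to H^0(\mathcal{E})^{*}\otimes\mathcal{O}_X\to\mathcal{S}^{*}\to\mathcal{E}xt^{1}(\mathcal{E}_0,\mathcal{O}_X)\to0$, and since $\mathcal{E}_0$ is torsion free on a smooth threefold the $\mathcal{E}xt^{1}$ term is supported in dimension $\leq1$; hence $\mathcal{S}^{*}$, and therefore $\wedge^{s-1}\mathcal{S}^{*}$, is generically globally generated in dimension one, so $\phi_{sp}(\wedge^{s-1}\mathcal{S}^{*})\leq2$ by the observation above. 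The natural contraction $\wedge^{s-1}\mathcal{S}^{*}\to\mathcal{S}^{**}\otimes(\det\mathcal{S})^{-1}=\mathcal{S}\otimes\det\mathcal{E}$ (using reflexivity of $\mathcal{S}$) is an isomorphism on the locus where $\mathcal{S}$ is locally free, whose complement has dimension $\leq0$ because $\mathcal{S}$ is reflexive on a threefold; its kernel and cokernel thus have $\phi_{sp}=0$, and two applications of Theorem \ref{Thm4.10} transport the bound to $\phi_{sp}(\mathcal{S}\otimes\det\mathcal{E})\leq2$.

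Finally I assemble the pieces with Theorem \ref{Thm4.10}. Tensoring $0\to\mathcal{S}\to H^0(\mathcal{E})\otimes\mathcal{O}_X\to\mathcal{E}_0\to0$ by $\det\mathcal{E}$ produces a short exact sequence with subsheaf $\mathcal{S}\otimes\det\mathcal{E}$ (of $\phi_{sp}\leq2$) and middle term a direct sum of copies of the ample-or-trivial line bundle $\det\mathcal{E}$ (of $\phi_{sp}=0$), so Theorem \ref{Thm4.10}(2) gives $\phi_{sp}(\mathcal{E}_0\otimes\det\mathcal{E})\leq1$. Tensoring $0\to\mathcal{E}_0\to\mathcal{E}\to Q\to0$ by $\det\mathcal{E}$ and using $\phi_{sp}(Q\otimes\det\mathcal{E})\leq1$ together with Theorem \ref{Thm4.10}(1) then yields $\phi_{sp}(\mathcal{E}\otimes\det\mathcal{E})\leq1$, as required. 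The one genuine obstacle, and the sole departure from Theorem \ref{Chern2} (where $\mathcal{E}=\Omega^1_Y$ is locally free and the wedge identity is an honest isomorphism), is the claim $\phi_{sp}(\mathcal{S}\otimes\det\mathcal{E})\leq2$: I must keep the non-locally-free locus of $\mathcal{S}$ and the $\mathcal{E}xt^{1}$-torsion under control, and it is precisely the threefold hypothesis that confines these discrepancies to dimension $\leq1$ (for the global generation) and to dimension $\leq0$ (for the contraction map).
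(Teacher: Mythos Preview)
Your proof is correct and follows essentially the same approach as the paper: reduce to showing $\phi_{sp}(\mathcal{E}\otimes\det\mathcal{E})\leq1$, establish $\phi_{sp}(\mathcal{S}\otimes\det\mathcal{E})\leq2$ via the wedge-power/contraction trick on the kernel of the evaluation sequence, and then apply Theorem~\ref{Thm4.10}(2) to the tensored evaluation sequence. Your packaging of the observation ``generically globally generated in dimension one $\Rightarrow$ $\phi_{sp}\leq2$'' streamlines the paper's argument (which instead passes through $\wedge^{r_0-1}(\operatorname{im} g)$ and an auxiliary map $h$ with small kernel and cokernel), and your explicit verification that $\mu_H^+(\mathcal{E}\otimes\det\mathcal{E})\geq0$ fills in a step the paper leaves implicit when dropping the $\max\{\,\cdot\,,0\}$ from Theorem~\ref{Thm5.1}.
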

\begin{proof}
By Theorem \ref{Thm5.1}, we only need to show $\phi_{sp}(\mathcal{E}\otimes\det\mathcal{E})\leq1$.

From our assumption on $\mathcal{E}$, one has an exact sequence
\begin{equation}\label{6.5}
0\rightarrow\mathcal{M}\rightarrow\mathcal{O}_X^{\oplus h^0(\mathcal{E})}\xrightarrow{ev_{\mathcal{E}}}\mathcal{E}\rightarrow\mathcal{Q}\rightarrow0,
\end{equation}
where $\dim \mathcal{Q}\leq1$. It follows that $\det\mathcal{E}\cong\det (\im ev_{\mathcal{E}})$ and
\begin{eqnarray*}
% \nonumber % Remove numbering (before each equation)
 \phi_{sp}(\mathcal{E}\otimes\det\mathcal{E})  &\leq& \max\{\phi_{sp}(\mathcal{Q}\otimes\det\mathcal{E}), \phi_{sp}((\im ev_{\mathcal{E}})\otimes\det\mathcal{E})\} \\
   &\leq& \max\{1, \phi_{sp}((\im ev_{\mathcal{E}})\otimes\det (\im ev_{\mathcal{E}}))\}.
\end{eqnarray*}
Thus one completes the proof if one can show $\phi_{sp}((\im ev_{\mathcal{E}})\otimes\det(\im ev_{\mathcal{E}}))\leq1$. Without loss of generality, we can assume that $\mathcal{Q}=0$ and $\mathcal{E}$ is generated by global sections, and the sequence (\ref{6.5}) becomes
\begin{equation*}
0\rightarrow\mathcal{M}\rightarrow\mathcal{O}_X^{\oplus h^0(\mathcal{E})}\xrightarrow{ev_{\mathcal{E}}}\mathcal{E}\rightarrow0.
\end{equation*}
Applying the functor $\mathcal{H}om(-, \mathcal{O}_X)$ to this, one obtains a long exact sequence
$$0\rightarrow\mathcal{E}^*\rightarrow\mathcal{O}_X^{\oplus h^0(\mathcal{E})}\xrightarrow{g}\mathcal{M}^*\rightarrow\mathcal{E}xt^1(\mathcal{E}, \mathcal{O}_X)\rightarrow0.$$
Consider the morphism $$h: \wedge^{r_0-1}(\im g)\rightarrow \wedge^{r_0-1}(\mathcal{M}^*)$$ induced by the inclusion $\im g\hookrightarrow\mathcal{M}^*$. By the torsion freeness of $\mathcal{E}$, $\im g$ and $\mathcal{M}^*$, one deduces that $\dim\mathcal{E}xt^1(\mathcal{E}, \mathcal{O}_X)\leq1$, $\dim\ker h\leq1$ and $\dim\coker h\leq1$. This implies
\begin{equation}\label{6.6}
 \phi_{sp}(\coker h)\leq1.
\end{equation}
Since $\im g$ is generated by global sections, the same thing holds for $\wedge^{r_0-1}(\im g)$ and $\wedge^{r_0-1}(\im g)/\ker h$. By Proposition \ref{gg2}, one sees that
\begin{equation}\label{6.7}
\phi_{sp}(\wedge^{r_0-1}(\im g)/\ker h)\leq2.
\end{equation}
From (\ref{6.6}), (\ref{6.7}) and the exact sequence
$$0\rightarrow\wedge^{r_0-1}(\im g)/\ker h\rightarrow\wedge^{r_0-1}(\mathcal{M}^*)\rightarrow\coker h\rightarrow0,$$ it follows that
\begin{equation}\label{6.8}
\phi_{sp}(\wedge^{r_0-1}(\mathcal{M}^*))\leq2.
\end{equation}

On the other hand, by \cite[Proposition 1.1]{Hart2}, one sees that $\mathcal{M}$ is reflexive. So is $\mathcal{M}^*$. From \cite[Corollary 1.4]{Hart2}, one can find an open subset $U\subset X$ such that both $\mathcal{M}|_U$ and $\mathcal{M}^*|_U$ are locally free and $\dim(X-U)=0$. Let $$r_0=\rank(\mathcal{M})=h^0(\mathcal{E})-r.$$
The composition of the multiplication morphism $$\mathcal{M}^*\otimes\wedge^{r_0-1}(\mathcal{M}^*)\rightarrow\wedge^{r_0}(\mathcal{M}^*)$$ and the natural morphism $$\wedge^{r_0}(\mathcal{M}^*)\rightarrow\Big(\wedge^{r_0}(\mathcal{M}^*)\Big)^{**}\cong\det(\mathcal{M}^*)\cong \det\mathcal{E}$$ induces a morphism
$$f:\wedge^{r_0-1}(\mathcal{M}^*)\rightarrow \mathcal{H}om(\mathcal{M}^*, \det\mathcal{E})\cong\mathcal{M}^{**}\otimes\det\mathcal{E}\cong\mathcal{M}\otimes\det\mathcal{E}.$$
It turns out that $f|_U$ is an isomorphism. Thus one infers that $$\dim \ker f=\dim\coker f=0.$$ By the same proof of (\ref{6.8}), we obtain
$$\phi_{sp}(\mathcal{M}\otimes\det\mathcal{E})\leq2.$$
Therefore, from Theorem \ref{Thm4.10} and the exact sequence
$$0\rightarrow\mathcal{M}\otimes\det\mathcal{E}\rightarrow(\det\mathcal{E})^{\oplus h^0(\mathcal{E})}\rightarrow\mathcal{E}\otimes\det\mathcal{E}\rightarrow0,$$ it follows that $$\phi_{sp}(\mathcal{E}\otimes\det\mathcal{E})\leq1.$$ This completes the proof.
\end{proof}

\begin{corollary}\label{cor6.4}
Let $Y$ be a complex smooth threefold. Assume that $\Omega^1_Y$ is generically globally generated in dimension one. If $K_Y$ is ample, then one has
$$c_3(Y)\geq3c_1(Y)c_2(Y)-\frac{26}{27}c_1^3(Y).$$
\end{corollary}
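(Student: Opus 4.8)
The plan is to apply Theorem~\ref{Thm6.5} to the cotangent bundle $\mathcal{E}=\Omega_Y^1$, which is a locally free (hence torsion free) sheaf of rank $3$ on the smooth threefold $Y$. Its determinant is $\det\Omega_Y^1=\omega_Y$, which is ample because $K_Y$ is ample, and $\Omega_Y^1$ is generically globally generated in dimension one by assumption; thus all hypotheses of Theorem~\ref{Thm6.5} hold. Choosing the polarization $H=K_Y$ (legitimate since $K_Y$ is ample), the theorem gives
$$\ch_3(\Omega_Y^1\otimes\omega_Y)\leq\frac{3\big(\mu_{K_Y}^+(\Omega_Y^1\otimes\omega_Y)\big)^3}{6(K_Y^3)^2}.$$

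First I would evaluate the slope on the right. Since $\omega_Y$ is ample, the sheaf $\Omega_Y^1$ is $\mu_{K_Y}$-stable (as recalled before Corollary~\ref{Chern0}), and tensoring by the line bundle $\omega_Y$ preserves semistability, so $\mu_{K_Y}^+(\Omega_Y^1\otimes\omega_Y)=\mu_{K_Y}(\Omega_Y^1\otimes\omega_Y)$. From $\ch_1(\Omega_Y^1\otimes\omega_Y)=\ch_1(\Omega_Y^1)+3K_Y=4K_Y$ I obtain $\mu_{K_Y}(\Omega_Y^1\otimes\omega_Y)=\frac{4}{3}K_Y^3$. Substituting this value makes the right-hand side collapse to $\frac{32}{27}K_Y^3$, whence
$$\ch_3(\Omega_Y^1\otimes\omega_Y)\leq\frac{32}{27}K_Y^3.$$

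It remains to rewrite the left-hand side in terms of $c_i=c_i(Y)$. Using $\ch(\Omega_Y^1\otimes\omega_Y)=\ch(\Omega_Y^1)\,e^{K_Y}$ together with $\ch_i(\Omega_Y^1)=(-1)^i\ch_i(T_Y)$ and $K_Y=-c_1$, the degree-three part expands to
$$\ch_3(\Omega_Y^1\otimes\omega_Y)=-\frac{5}{3}c_1^3+\frac{3}{2}c_1c_2-\frac{1}{2}c_3.$$
Inserting this into the previous inequality, using $K_Y^3=-c_1^3$, and rearranging to solve for $c_3$ (the coefficient $-\frac{1}{2}$ reversing the inequality) produces exactly $c_3\geq 3c_1c_2-\frac{26}{27}c_1^3$, as desired.

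The only non-formal ingredient is the slope computation, which relies on the $\mu_{K_Y}$-stability of $\Omega_Y^1$; the rest is the deliberate choice $H=K_Y$, made so that the semistable slope simplifies cleanly, followed by routine Chern-character bookkeeping. I anticipate no real difficulty beyond tracking the signs and rational coefficients in the final expansion.
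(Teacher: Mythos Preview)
Your proof is correct and follows essentially the same route as the paper: apply Theorem~\ref{Thm6.5} with $H=K_Y$, use the $\mu_{K_Y}$-semistability of $\Omega_Y^1$ to compute $\mu_{K_Y}^+(\Omega_Y^1\otimes\omega_Y)=\frac{4}{3}K_Y^3$, and then unwind $\ch_3(\Omega_Y^1\otimes\omega_Y)\leq\frac{32}{27}K_Y^3$ in terms of the Chern classes. The only cosmetic difference is that the paper passes through the intermediate identity $\ch_3(\Omega_Y^1\otimes\omega_Y)=\ch_3(\Omega_Y^1)+K_Y\ch_2(\Omega_Y^1)+K_Y^3$ before substituting, whereas you expand directly.
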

\begin{proof}
Since $\Omega_Y^1$ is $\mu_{K_Y}$-semistable, Theorem \ref{Thm6.5} gives $$\ch_3(\Omega_Y^1\otimes\omega_Y)\leq\frac{64}{54}K_Y^3.$$ This implies that
$$\ch_3(\Omega_Y^1)+\ch_2(\Omega_Y^1)K_Y+K_Y^3\leq\frac{64}{54}K_Y^3.$$
Substituting
 $$\ch_3(\Omega_Y^1)=-\frac{1}{6}(c_1^3(Y)-3c_1(Y)c_2(Y)+3c_3(Y))$$ and
$$\ch_2(\Omega_Y^1)=\frac{1}{2}c_1^2(Y)-c_2(Y)$$ into the inequality above, one obtains the conclusion.
\end{proof}

\bibliographystyle{amsplain}

\end{document}